\documentclass[a4paper,10pt]{article}
\usepackage[utf8x]{inputenc}
\usepackage{tracefnt,amsmath,tabu,array}
\usepackage{amssymb,graphicx,setspace,amsfonts,amsbsy}
\usepackage{pifont,latexsym,ifthen,amsthm,rotating,calc,textcase,booktabs}

\addtolength{\oddsidemargin}{-.5in}
\addtolength{\evensidemargin}{-.5in}
\addtolength{\textwidth}{1 in}

\addtolength{\topmargin}{-.5in}
\addtolength{\textheight}{.5in}

\newtheorem{theorem}{Theorem}[section]
\newtheorem{lemma}[theorem]{Lemma}
\newtheorem{corollary}[theorem]{Corollary}
\newtheorem{proposition}[theorem]{Proposition}
\newtheorem{statement}[theorem]{Statement}
\newtheorem{definition}[theorem]{Definition}

\newtheorem{remark}[theorem]{Remark}
\newcommand{\filledbox}{\leavevmode
  \hbox to.77778em{%
  \hfil\vbox to.675em{\hrule width.6em height.6em}\hfil}}

\newcommand{\Rm}{{\mathbb R}}
\newcommand{\Hm}{{\mathbb H}}
\newcommand{\Db}{{\mathbf D}}
\newcommand{\Di}{{\mathbf T}}
\newcommand{\eps}{\varepsilon}

\title{A Semi-linear Energy Critical Wave Equation With Applications}
\author{Ruipeng Shen\\
Department of Mathematics and Statistics\\
McMaster University\\
Hamilton, ON, Canada}

\begin{document}
\maketitle

\begin{abstract}
In this work we consider a semi-linear energy critical wave equation in $\Rm^d$ ($3\leq d \leq 5$)
\[
 \partial_t^2 u - \Delta u =  \pm \phi(x) |u|^{4/(d-2)} u, \qquad (x,t)\in \Rm^d \times \Rm
\]
with initial data $(u, \partial_t u)|_{t=0} = (u_0,u_1) \in \dot{H}^1 \times L^2 (\Rm^d)$. Here the function $\phi \in C(\Rm^d; (0,1])$ converges to zero as $|x| \rightarrow \infty$. We follow the same compactness-rigidity argument as Kenig and Merle applied in their paper \cite{kenig} on the Cauchy problem of the equation 
\[ 
 \partial_t^2 u - \Delta u = |u|^{4/(d-2)} u
\]
and obtain a similar result when $\phi$ satisfies some technical conditions. In the defocusing case we prove that the solution scatters for any initial data in the energy space $\dot{H}^1 \times L^2$. While in the focusing case we can determine the global behaviour of the solutions, either scattering or finite-time blow-up, according to their initial data when the energy is smaller than a certain threshold. 
\end{abstract}

\section{Introduction}

In this work we consider a semi-linear energy critical wave equation in $\Rm^d$ with $3\leq d \leq 5$:
\[
 \left\{\begin{array}{ll} \displaystyle \partial_t^2 u - \Delta u = \zeta \phi(x) |u|^{p_c -1} u, & (x,t)\in \Rm^d \times \Rm;\\
  u(\cdot ,0) = u_0  \in \dot{H}^1 (\Rm^d); &\\
  \partial_t u(\cdot,0) = u_1 \in L^2 (\Rm^d); & 
 \end{array}\right. \qquad \hbox{(CP1)}
\]
Here the coefficient function $\phi(x)$ satisfies 
\begin{align} \label{basic condition of phi}
 &\phi \in C(\Rm^d; (0,1]),& &\lim_{|x| \rightarrow \infty} \phi (x) = 0.&
\end{align}
The exponent $p_c = 1+ \frac{4}{d-2}$ is energy-critical and $\zeta = \pm 1$. If $\zeta = 1$, then the equation is called focusing, otherwise defocusing. Solutions to this equation satisfy an energy conservation law:
\begin{equation} \label{energy of CP1}
 E_\phi (u, \partial_t u) = \int_{\Rm^d} \left(\frac{1}{2} |\nabla u|^2 + \frac{1}{2} |\partial_t u|^2 - \frac{\zeta}{2^\ast} \phi |u|^{2^\ast} \right) dx = E_\phi (u_0,u_1). 
\end{equation}
Here the notation $2^\ast$ represents the constant 
\[
 2^\ast = \frac{2d}{d-2}. 
\]
By the Sobolev embedding $\dot{H}^1 (\Rm^d) \hookrightarrow L^{2^\ast} (\Rm^d)$, the energy $E_\phi (u_0,u_1)$ is finite for any initial data $(u_0,u_1) \in \dot{H}^1 \times L^2 (\Rm^d)$. 

\subsection{Background} 

\paragraph{Pure Power-type Nonlinearity} Wave equations with a similar nonlinearity have been extensively studied in many works over a few decades, in particular with a power-type nonlinearity $\zeta |u|^{p-1} u$. There is a large group of symmetries acts on the set of solutions to an equation of this kind. For example, if $u(x,t)$ is a solution to 
\begin{equation} \label{NLW with exponent p}
 \partial_t^2 u - \Delta u = \zeta |u|^{p-1} u 
\end{equation}
with initial data $(u_0,u_1)$, then $\displaystyle \tilde{u} (x,t) \doteq  \frac{1}{\lambda^{\frac{2}{p-1}}} u\left(\frac{x-x_0}{\lambda}, \frac{t-t_0}{\lambda}\right)$ is another solution to \eqref{NLW with exponent p} with initial data
\[
 \left(\frac{1}{\lambda^{\frac{2}{p-1}}} u_0 \left(\frac{x-x_0}{\lambda}\right), \frac{1}{\lambda^{\frac{2}{p-1}+1}} u_1 \left(\frac{x-x_0}{\lambda}\right)\right)
\]
at $t = t_0$, where $\lambda > 0$, $x_0 \in \Rm^d$ and $t_0 \in \Rm$ are arbitrary constants. One can check that the energy defined by
\[
 E(u, \partial_t u) = \int_{\Rm^d} \left(\frac{1}{2} |\nabla u|^2 + \frac{1}{2} |\partial_t u|^2 - \frac{\zeta}{p+1} |u|^{p+1} \right) dx
\]
is preserved under the transformations defined above, i.e. $E(u, \partial_t u) = E (\tilde{u}, \partial_t \tilde{u})$, if and only if $p = p_c \doteq 1 + \frac{4}{d-2}$. This is the reason why the exponent $p_c$ is called the energy-critical exponent, and why the equation \eqref{NLW with exponent p} with $p = p_c$ is called an energy-critical nonlinear wave equation. 

\paragraph{Previous Results} A large number of papers have been devoted to the study of wave equations with a power-type nonlinearity. For instance almost complete results about Strichartz estimates, which is the basis of a local theory, can be found in \cite{strichartz, endpointStrichartz}. Local and global well-posedness has been considered for example in \cite{loc1, ls}. In particular, there are a lot of works regarding the global existence and well-posedness of solutions with small initial data such as \cite{smallgs2, smallgs3, gwpwrn, smallgs1}. Questions on global behaviour of larger solutions, such as scattering and blow-up, are usually considered more subtle.  Grillakis \cite{mg1, mg2} and Shatah-Struwe \cite{ss1,ss2} proved the global existence and scattering of solutions with any $\dot{H}^1 \times L^2$ initial data in the energy-critical, defocusing case  in 1990's. The focusing, energy-critical case has been the subject of several more recent papers. This current work is motivated by one of them, F. Merle and C. Kenig's work \cite{kenig}. I would like to describe briefly its main results and ideas  here. 

\paragraph{Merle and Kenig's work} Let us consider the focusing, energy-critical wave equation 
\[
 \left\{\begin{array}{ll} \displaystyle \partial_t^2 u - \Delta u = |u|^{p_c -1} u, & (x,t)\in \Rm^d \times \Rm;\\
  u(\cdot ,0) = u_0  \in \dot{H}^1 (\Rm^d); &\\
  \partial_t u(\cdot,0) = u_1 \in L^2 (\Rm^d); & 
 \end{array}\right. \qquad \hbox{(CP0)}
\]
Unlike the defocusing case, the solutions to this equation do not necessarily scatter. The ground states, defined as the solutions of (CP0) independent of the time $t$ and thus solving the elliptic equation $-\Delta W = |W|^{p_c-1} W$, are among the most important counterexamples. One specific example of the ground states is given by the formula
\[
 W(x,t) = W(x) = \frac{1}{\left(1 + \frac{|x|^2}{d(d-2)}\right)^{\frac{d-2}{2}}}.
\] 
Kenig and Merle's work classifies all solutions to (CP0) whose energy satisfies the inequality  
\[
 E(u_0,u_1) \doteq \int_{\Rm^d} \left(\frac{1}{2} |\nabla u_0|^2 + \frac{1}{2} |u_1|^2 - \frac{1}{2^\ast} |u_0|^{2^\ast} \right) dx < E(W,0)
\]
into two categories:
\begin{itemize}
 \item[(I)] If $\|\nabla u_0\|_{L^2} < \|\nabla W\|_{L^2}$, then the solution $u$ exists globally in time and scatters. The exact meaning of  scattering is explained in Definition \ref{def of scattering} blow. 
 \item[(II)] If $\|\nabla u_0\|_{L^2} > \|\nabla W\|_{L^2}$, then the solution blows up within finite time in both two time directions. 
\end{itemize}
Please note that $\|\nabla u_0\|_{L^2} = \|\nabla W\|_{L^2}$ can never happen if $E(u_0,u_1) < E(W,0)$. Thus the classification is complete under the assumption that $E(u_0,u_1) < E(W,0)$. The scattering part of this result is proved via a compactness-rigidity argument, which consists of two major steps.
\begin{itemize}
 \item[(I)] If the scattering result were false, then there would exist a non-scattering solution to (CP0), called a ``critical element'', with a minimal energy among those non-scattering solutions, that has a compactness property up to dilations and space translations. 
 \item[(II)] A ``critical element'' as described above does not exist. 
\end{itemize}

\paragraph{Solutions with a greater energy} Before introducing the main results,  the author would like to mention a few works that discuss the properties of the solutions to (CP0) with an energy $E \geq E(W,0)$. These works include \cite{secret, tkm1, radial dynamics} (Radial case) and \cite{nonradial dynamics} (Non-radial Case).

\subsection{Main Results of this work} 

In this work, we will prove that similar results as mentioned in the previous subsection still hold for the equation (CP1), at least for those $\phi$'s that satisfy some additional condition besides \eqref{basic condition of phi}. 

\paragraph{Defocusing Case}  As in the case of the wave equation with a pure power-type nonlinearity, we expect that all solutions in the defocusing case scatter. In fact we have 

\begin{theorem} \label{main theorem defocusing}
Let $3 \leq d\leq 5$. Assume the coefficient function $\phi \in C^1 (\Rm^d)$ satisfies the condition \eqref{basic condition of phi} and 
\begin{equation} \label{condition phi defocusing}
 \phi(x)- \frac{(d-2) x\cdot \nabla \phi(x)}{2(d-1)} > 0,\qquad \hbox{for any}\quad x \in \Rm^d.
\end{equation}
Then the solution to the Cauchy Problem (CP1) in the defocusing case with any initial data $(u_0,u_1) \in \dot{H}^1 \times L^2 (\Rm^d)$ exists globally in time and scatters. 
\end{theorem}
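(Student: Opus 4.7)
The plan is to carry out the compactness-rigidity program of Kenig and Merle, adapted to the variable-coefficient equation (CP1). First I would establish the standard local theory: because $\|\phi\|_{L^\infty}\leq 1$ and the wave operator is unchanged, the $\dot{H}^1\times L^2$-critical Strichartz estimates give local well-posedness, a perturbation/stability lemma, and small-data scattering for (CP1). In the defocusing case ($\zeta=-1$) the energy $E_\phi$ is manifestly coercive, satisfying $\|(u,\partial_t u)\|_{\dot{H}^1\times L^2}^2\leq 2E_\phi$, so any solution is a priori bounded in $\dot{H}^1\times L^2$ on its maximal interval of existence.

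Next, assume toward contradiction that scattering fails for some datum and define
\[ E_c \;=\; \inf\bigl\{E_\phi(u_0,u_1) : \text{the forward solution of (CP1) does not scatter}\bigr\}>0. \]
A linear and nonlinear profile decomposition together with a Pythagorean expansion of $E_\phi$ produces a critical element $u_c$ of energy exactly $E_c$ that does not scatter. The lack of symmetries of (CP1) actually simplifies the classification of profiles: any profile whose spatial center $x_n$ escapes to infinity sees $\phi(x_n)\to 0$ and is therefore governed in the limit by the free wave equation, which scatters; any profile concentrating at a fixed point $x_0$ with scale $\lambda_n\to 0$ rescales to the constant-coefficient defocusing critical NLW with coupling $\phi(x_0)\in(0,1]$, which scatters globally by Grillakis and Shatah-Struwe; profiles with $\lambda_n\to\infty$ likewise decouple from the nonlinearity. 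Consequently the profile yielding $u_c$ must have bounded translation and scaling parameters, forcing the orbit $\{(u_c(t),\partial_t u_c(t)) : t\in [0,T_{\max}(u_c))\}$ to be precompact in $\dot{H}^1\times L^2$ with no residual modulation.

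For the rigidity step I would run a Morawetz/virial identity on $u_c$. Pairing the truncated multiplier $\chi_R(x)\,(x\cdot\nabla u + \tfrac{d-1}{2}u)$ with $\partial_t u_c$ and integrating on $[0,T]\times\Rm^d$, integration by parts produces the usual linear Morawetz terms together with a nonlinear spacetime contribution proportional to
\[ \int_0^T\!\!\int_{\Rm^d}\left[\phi(x)-\frac{(d-2)\,x\cdot\nabla\phi(x)}{2(d-1)}\right]|u_c(x,t)|^{2^\ast}\,dx\,dt, \]
up to boundary terms at $t=0,T$ and truncation errors supported on $|x|\sim R$. Hypothesis \eqref{condition phi defocusing} is precisely the pointwise positivity of the bracketed weight, so the integral above has a definite sign. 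The boundary terms are bounded uniformly in $T$ by the $\dot{H}^1\times L^2$ norm (with a possibly $R$-dependent constant), and precompactness of the orbit allows the truncation errors to be made arbitrarily small by choosing $R$ large. Dividing by $T$ and sending $T\to\infty$ forces the spacetime density above to vanish, so $u_c\equiv 0$, contradicting $E_c>0$.

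The main obstacle I expect is the profile decomposition rather than the rigidity step. Since (CP1) is not invariant under either spatial translation or rescaling, the nonlinear profile associated to a given linear profile must be defined as the solution of a companion Cauchy problem with the correspondingly shifted and rescaled coefficient, and then pulled back. One must verify that these companion equations fit into a uniform small-data and long-time perturbation framework, that their scattering theories in the appropriate limits (free wave equation or constant-coefficient critical NLW) produce summable Strichartz bounds, and that a Pythagorean-type expansion of $E_\phi$ holds asymptotically despite $\phi$ being slowly varying rather than constant over the support of each concentrating profile. These bookkeeping issues absorb the bulk of the technical work; once handled, the Morawetz-based rigidity that uses \eqref{condition phi defocusing} is comparatively routine.
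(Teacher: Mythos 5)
Your proposal follows the same compactness--rigidity scheme as the paper, and the strategic outline is correct: Strichartz-based local theory and stability, profile decomposition with nonlinear profiles defined against shifted/rescaled coefficient equations, reduction to an almost-periodic critical element, and then a Morawetz-type rigidity argument whose sign condition is exactly \eqref{condition phi defocusing}. The one place where your sketch is not merely imprecise but computationally wrong is the Morawetz multiplier. You propose pairing $\partial_t u$ with $\chi_R\bigl(x\cdot\nabla u + \tfrac{d-1}{2}u\bigr)$, i.e.\ the virial weight $a(x)=\tfrac12|x|^2$ with $\nabla a = x$. For a multiplier $\nabla a\cdot\nabla u + \tfrac{\Delta a}{2}u$, the spacetime density produced by the nonlinearity $\phi|u|^{p_c-1}u$ is proportional to
\[
\frac{1}{d}\,\phi\,\Delta a \;-\; \frac{1}{2^\ast}\,\nabla\phi\cdot\nabla a .
\]
With $\nabla a = x$, $\Delta a = d$ this equals $\phi - \tfrac{d-2}{2d}\,x\cdot\nabla\phi$, which is \emph{not} the quantity $\eta(x)=\phi - \tfrac{d-2}{2(d-1)}\,x\cdot\nabla\phi$ appearing in hypothesis \eqref{condition phi defocusing}; those two combinations are inequivalent conditions whenever $x\cdot\nabla\phi$ changes sign. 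The paper instead uses the Lin--Strauss weight $a(x)=|x|$, so $\nabla a = x/|x|$, $\Delta a = (d-1)/|x|$, and the same computation yields $\tfrac{d-1}{d}\,\eta(x)/|x|$; see Proposition \ref{Morawetz1}. Your claimed density $\int\eta\,|u|^{2^\ast}$ with no $1/|x|$ therefore does not arise from your multiplier, and the one that does arise is not controlled in sign by the theorem's hypothesis.

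The concluding step also needs sharpening. Once the Morawetz inequality $\int\!\!\int \eta(x)\,|u|^{2^\ast}/|x|\,dx\,dt \le \tfrac{2d}{d-1}E_\phi$ is in hand, ``dividing by $T$ and sending $T\to\infty$'' only shows the \emph{time average} vanishes, which by itself does not force $u_c\equiv 0$. The paper closes the loop by using precompactness of the trajectory to get a uniform positive lower bound $\delta_1>0$ for $\int_{t_0}^{t_0+\tau}\!\int \beta(x)|u|^{2^\ast}\,dx\,dt$ on every time slab of length $\tau$ (Lemma \ref{H1 lower bound distribution}), so integrating over $n$ slabs contradicts the finite global bound directly. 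You gesture at precompactness only for the cutoff error; you should also invoke it to produce this slab-wise lower bound, or else the final contradiction is not justified.
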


\begin{remark}
Any positive radial $C^1$ function satisfies the condition \eqref{condition phi defocusing} as long as it decreases as the radius $r = |x|$ grows .
\end{remark}

\paragraph{Focusing Case} As in the case of a pure power-type nonlinearity, we can classify all solutions with an energy smaller than a certain positive constant. The threshold here is again the energy of the ground state $W$ for the equation (CP0), defined by
\[
 E_1 (W,0) = \int_{\Rm^d} \left(\frac{1}{2} |\nabla W|^2 - \frac{1}{2^\ast} |W|^{2^\ast} \right) dx.
\]
Please note that $W$ is no longer a ground state of (CP1) and that the energy above is not the energy $E_\phi (W,0)$ for the equation (CP1) as defined in \eqref{energy of CP1}. This can be explained by the following fact\footnote{Without loss of generality, we assume the value of $\phi$ is equal to the upper bound $1$ somewhere; otherwise the threshold can be improved, see corollary \ref{critical focusing phi equal c}.}: If $\phi(x_0) = 1$, then the rescaled version of $W$ defined by
\[
 W_{\lambda, x_0} (x) = \frac{1}{\lambda^{\frac{d-2}{2}}} W \left(\frac{x-x_0}{\lambda}\right)
\]
is ``almost'' a ground state for (CP1) as $\lambda \rightarrow 0^+$ with its energy $E_\phi (W_{\lambda, x_0}, 0) \rightarrow E_1 (W,0)$, as shown by Lemma \ref{approximation solution 1}.  

\begin{theorem} \label{main theorem focusing}
Let $3 \leq d\leq 5$. Assume the function $\phi \in C^1 (\Rm^d)$ satisfies the condition \eqref{basic condition of phi} and 
\begin{equation} \label{condition phi focusing}
 2^\ast (1 - \phi(x)) + (x\cdot \nabla \phi(x)) \geq 0,\qquad \hbox{for any}\quad x \in \Rm^d.
\end{equation} 
Given initial data $(u_0,u_1) \in \dot{H}^1 \times L^2 (\Rm^d)$ with an energy $E_\phi (u_0,u_1) < E_1 (W,0)$, the global behaviour, and in particular, the maximal interval of existence $I = (- T_- (u_0,u_1), T_+ (u_0,u_1))$ of the corresponding solution $u$ to the Cauchy problem (CP1) in the focusing case can be determined by:
\begin{itemize}
 \item[(i)] If $\|\nabla u_0\|_{L^2} < \|\nabla W\|_{L^2}$, then $I = \Rm$ and $u$ scatters in both time directions. 
 \item[(ii)] If $\|\nabla u_0\|_{L^2} > \|\nabla W\|_{L^2}$, then $u$ blows up within finite time in both two directions, namely 
 \begin{align*}
  &T_- (u_0,u_1) < +\infty;& &T_+ (u_0,u_1) < +\infty.&
 \end{align*}
\end{itemize}
\end{theorem}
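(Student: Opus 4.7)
The plan is to follow the Kenig-Merle compactness-rigidity scheme, adapted to compensate for the loss of scale and translation invariance caused by the presence of $\phi(x)$. The starting observation is that $0 < \phi \leq 1$ forces $E_\phi(u_0, u_1) \geq E_1(u_0, u_1)$, where $E_1$ denotes the energy functional of the pure-power equation (CP0) (i.e.\ $E_\phi$ with $\phi \equiv 1$). Hence the subthreshold hypothesis $E_\phi < E_1(W, 0)$ propagates along the flow to $E_1(u(t), \partial_t u(t)) < E_1(W, 0)$, and Kenig and Merle's sharp-Sobolev variational analysis forbids $\|\nabla u(t)\|_{L^2} = \|\nabla W\|_{L^2}$; by continuity the two regimes (i) and (ii) are therefore invariant under the flow. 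In regime (i) one extracts a coercive bound $\|\nabla u(t)\|_{L^2}^2 + \|\partial_t u(t)\|_{L^2}^2 \lesssim E_\phi$ together with $\int \phi |u|^{2^\ast}\,dx \leq \theta \|\nabla u\|_{L^2}^2$ for some $\theta < 1$, and in regime (ii) a uniform gap $\int \phi |u|^{2^\ast}\,dx - \|\nabla u\|_{L^2}^2 \geq \delta > 0$.

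Given these facts, part (ii) follows from a Payne-Sattinger/Levine concavity argument: the uniform gap, fed into the second derivative of a truncated variance $y(t) = \frac{1}{2}\int \chi_R |u|^2 \, dx$ (the cutoff is necessary since $u$ is only $\dot H^1$-regular) together with Cauchy-Schwarz and the energy identity, yields a differential inequality that forces $u$ to leave $\dot H^1 \times L^2$ in finite time in both directions. In part (i), global existence is immediate from the coercive bound and the standard local well-posedness theory, which applies to (CP1) because $\phi \in C \cap L^\infty$.

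The main effort goes into proving scattering in (i). Supposing it fails, small-data theory and the Bahouri-Gérard profile decomposition applied to the free wave evolution yield a minimal non-scattering critical element $u_c$ whose trajectory is precompact in $\dot H^1 \times L^2$ modulo modulation parameters $(\lambda(t), x(t))$. Three degenerate scenarios must be excluded individually: $\lambda(t) \to 0$ would produce, after rescaling and passing to a subsequence, a limiting equation of type (CP0) at a point $x_0$ with coefficient $\phi(x_0) \leq 1$, whose own scattering threshold is at least $E_1(W, 0)$, contradicting the subthreshold assumption together with minimality; $\lambda(t) \to \infty$ or $|x(t)| \to \infty$ sends $\phi$ to zero in the limiting window, leaving a free wave that disperses and forces $u_c \equiv 0$. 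With $(\lambda(t), x(t))$ thus confined to a compact region, a truncated virial identity of the type $\frac{d}{dt}\int \chi_R \bigl(x\cdot\nabla u_c + \tfrac{d-2}{2} u_c\bigr)\partial_t u_c\,dx$ is computed; its bulk part, after integration by parts and use of the equation, is a linear combination of $\|\nabla u_c\|_{L^2}^2 - \int \phi |u_c|^{2^\ast}\,dx$ (strictly signed by regime (i) coercivity) and $\int \bigl[2^\ast(1 - \phi(x)) + x\cdot\nabla\phi(x)\bigr]|u_c|^{2^\ast}\,dx$ (non-negative by hypothesis \eqref{condition phi focusing}). Strict monotonicity of the virial functional then collides with the uniform bound forced by precompactness, yielding $u_c \equiv 0$ and a contradiction.

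The hard part will be this final compactness-plus-rigidity step: (a) ruling out the three modulation escape scenarios, each of which must be matched carefully against the Kenig-Merle theorem at the borderline energy $E_1(W, 0)$, and (b) assembling the virial quantity so that the algebraic hypothesis \eqref{condition phi focusing} delivers the decisive sign while the cutoff errors from finite-speed-of-propagation remain uniformly controlled on the precompact trajectory. These are precisely where the loss of exact scale and translation symmetries in (CP1) makes the argument substantially more delicate than in the original Kenig-Merle treatment.
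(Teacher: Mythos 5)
Your proposal follows essentially the same compactness--rigidity route as the paper: energy trapping via the sharp Sobolev constant (your observation $E_\phi \geq E_1$ is a clean shortcut to the paper's Proposition \ref{energy trapping}), a concavity argument on a truncated variance for the blow-up part, a Bahouri--G\'erard profile decomposition whose concentrating scenario ($\lambda \to 0$) is ruled out because the limiting constant-coefficient equation has threshold $\phi(x_j)^{-(d-2)/2} E_1(W,0) \geq E_1(W,0)$ (Corollary \ref{critical focusing phi equal c}) and whose escaping scenarios ($\lambda \to \infty$, $|x| \to \infty$) are absorbed because $\phi \to 0$ there, and finally a cut-off virial identity whose decisive sign comes from \eqref{condition phi focusing}. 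One computational slip to flag: the virial multiplier coefficient should be $\tfrac{d}{2}$ rather than $\tfrac{d-2}{2}$ --- with your choice the $\|u_t\|_{L^2}^2$ terms do not cancel and you instead land on $-\int |u_t|^2\,dx$ together with the $x\cdot\nabla\phi$ remainder carrying the \emph{wrong} sign under \eqref{condition phi focusing}, whereas the choice $\tfrac{d}{2}$ produces $-\int\bigl(|\nabla u|^2 - |u|^{2^\ast}\bigr)\,dx - \tfrac{1}{2^\ast}\int\bigl[2^\ast(1-\phi)+x\cdot\nabla\phi\bigr]|u|^{2^\ast}\varphi_R\,dx + O(\kappa(R))$ and both pieces are coercive.
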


\begin{remark} \label{example of phi focusing}
 The function $\phi (x) = (\frac{|x|}{\sinh |x|})^\sigma$ satisfies the conditions in Theorem \ref{main theorem focusing} as long as $2 \leq \sigma \leq 2^\ast$. 
\end{remark}

\begin{remark}
The compactness process works for any $\phi$ that satisfies the basic assumption \eqref{basic condition of phi}. Thus the main theorem might still work without the assumption \eqref{condition phi defocusing} or \eqref{condition phi focusing}, if we could develop a successful rigidity theory for more general $\phi$'s. 
\end{remark}

\subsection{Idea of the proof}

In this subsection we briefly describe the idea for the scattering part of our main theorems. We focus on the focusing case, but the defocusing case, that is less difficult, can be handled in the same way. Let us first introduce ($M>0$)

\begin{statement}[SC($\phi$, M)]
 There exists a function $\beta: [0,M) \rightarrow \Rm^+$, such that if the initial data $(u_0,u_1) \in \dot{H}^{1} \times L^2 (\Rm^d)$ satisfy
 \begin{align*}
  &\|\nabla u_0\|_{L^2} < \|\nabla W\|_{L^2},& &E_\phi (u_0,u_1) < M;&
 \end{align*}
 then the solution $u$ to (CP1) in the focusing case with the initial data $(u_0,u_1)$ exists globally in time, scatters in both two time directions with 
 \[
  \|u\|_{L^{\frac{d+2}{d-2}} L^{\frac{2(d+2)}{d-2}}(\Rm \times \Rm^d)} < \beta (E_\phi (u_0,u_1)).
 \]
\end{statement}

\begin{remark}
According to Remark \ref{positive energy}, if $\|\nabla u_0\|_{L^2} < \|\nabla W\|_{L^2}$, then we have
\[
 E_\phi (u_0,u_1) \simeq \|(u_0,u_1)\|_{\dot{H}^1 \times L^2}^2 \geq 0.
\]
Therefore we have 
\begin{itemize}
 \item The expression $\beta (E_\phi (u_0,u_1))$ is always meaningful. 
 \item Proposition \ref{scattering with small data} guarantees that the statement SC($\phi$, M) is always true if $M > 0$ is sufficiently small.
\end{itemize}
\end{remark}

\paragraph{Compactness Process} It is clear that the statement SC($\phi$, $E_1(W,0)$) implies the scattering part of our main theorem \ref{main theorem focusing}. If the statement above broke down at $M_0 < E_1 (W,0)$, i.e. SC($\phi$, M) holds for $M = M_0$ but fails for any $M > M_0$, then we would find a sequence of non-scattering solutions $u_n$'s with initial data $(u_{0,n}, u_{1,n})$, such that $E_\phi (u_{0,n}, u_{1,n})\rightarrow M_0$. In this case a critical element can be extracted as the limit of some subsequence of $\{u_n\}$ by applying the profile decomposition. This process is somewhat standard for the wave or Schr\"{o}dinger equations. However, this is still some difference between our argument and that for a wave equation with a pure power-type nonlinearity. The point is that dilations and space translations are no longer contained in the symmetric group of this equation. The situation is similar when people are considering the compactness process for wave/Schr\"{o}dinger equations on a space other than the Euclidean spaces, see \cite{profile decom1, profile decom2}, for instance. We start by introducing the profile decomposition, before more details are discussed. 

\paragraph{The profile decomposition} One of the key components in the compactness process is the profile decomposition. Given a sequence $(u_{0,n}, u_{1,n}) \in \dot{H}^1 \times L^2 (\Rm^d)$, we can always find a subsequence of it, still denoted by $\{(u_{0,n}, u_{1,n})\}_{n \in {\mathbb Z}^+}$, a sequence of free waves (solutions to the linear wave equation), denoted by $\{V_j (x,t)\}_{j \in {\mathbb Z}^+}$, and a triple $(\lambda_{j,n}, x_{j,n}, t_{j,n}) \in \Rm^+ \times \Rm^d \times \Rm$ for each pair $(j,n)$, such that 
\begin{itemize}
 \item For each integer $J>0$, we have the decomposition
\[
 (u_{0,n}, u_{1,n}) = \sum_{j=1}^J \left(V_{j,n}(\cdot, 0), \partial_t V_{j,n}(\cdot, 0)\right) + (w_{0,n}^J, w_{0,n}^J).
\]
Here $V_{j,n}$ is a modified version of $V_j$ via the application of a dilation, a space translation and/or a time translation:
\[
 \left(V_{j,n}(x, t), \partial_t V_{j,n}(x, t) \right)= \left(\frac{1}{\lambda^{\frac{d-2}{2}}} V_j \left(\frac{x-x_{j,n}}{\lambda_{j,n}}, \frac{t-t_{j,n}}{\lambda_{j,n}}\right), \frac{1}{\lambda^{\frac{d}{2}}} \partial_t V_j \left(\frac{x-x_{j,n}}{\lambda_{j,n}}, \frac{t-t_{j,n}}{\lambda_{j,n}}\right)\right);
\]
and $(w_{0,n}^J, w_{1,n}^J)$ represents a remainder that gradually becomes negligible as $J$ and $n$ grow.
 \item The sequences $\{(\lambda_{j,n}, x_{j,n}, t_{j,n})\}_{n \in {\mathbb Z}^+}$ and $\{(\lambda_{j',n}, x_{j',n}, t_{j',n})\}_{n \in {\mathbb Z}^+}$ are ``almost orthogonal'' for $j\neq j'$. More precisely we have
\[
 \lim_{n\rightarrow \infty} \left(\frac{\lambda_{j,n}}{\lambda_{j',n}} + \frac{\lambda_{j',n}}{\lambda_{j,n}} + \frac{|x_{j,n}-x_{j',n}|}{\lambda_{j,n}} + \frac{|t_{j,n}-t_{j',n}|}{\lambda_{j,n}}\right) = +\infty.
\]
 \item We can also assume $\lambda_{j,n} \rightarrow \lambda_j \in [0,\infty) \cup \{\infty\}$, $x_{j,n} \rightarrow x_j \in \Rm^d \cup \{\infty\}$ and $-t_{j,n}/\lambda_{j,n} \rightarrow t_j \in \Rm \cup \{\infty, -\infty\}$ as $n \rightarrow \infty$ for each fixed $j$. 
\end{itemize}
\paragraph{The nonlinear profile} In the case of a pure power-type nonlinearity, we can approximate the solution to (CP0) with initial data $\left(V_{j,n}(\cdot, 0), \partial_t V_{j,n}(\cdot, 0)\right)$ by a nonlinear profile $U_j$, which is another solution to (CP0), up to a dilation, a space translation and/or a time translation, and then add these approximations up to obtain an approximation of $u_n$, thanks to the almost orthogonality. The fact that the equation (CP0) is invariant under dilations and space/time translations plays a crucial role in this argument. As a result, this can no longer be done for the equation (CP1). However, this problem can still be solved if we allow the use of nonlinear profiles that are not necessarily solutions to (CP1) but possibly solutions to other related equations instead. In fact, the solution to (CP1) with initial data $\left(V_{j,n}(\cdot, 0), \partial_t V_{j,n}(\cdot, 0)\right)$ can be approximated by a nonlinear profile $U_j$ as described below, up to a dilation, a space translation and/or a time translation.
\begin{itemize}
 \item[I] {(Expanding Profile)} If $\lambda_j= \infty$, then the profile spreads out in the space as $n \rightarrow \infty$. Eventually a given compact set won't contain any significant part of the profile. The combination of this fact and our assumption $\lim_{|x| \rightarrow \infty} \phi (x) = 0$ implies that the nonlinear term $\phi(x) |u|^{p_c -1} u$ is actually negligible as $n \rightarrow \infty$. As a result, the nonlinear profile $U_j$ in this case is simply  a solution to the free linear wave equation. 
 \item[II] {(Traveling Profile)} If $\lambda_j < \infty$ but $x_j = \infty$, then the profile travels to the infinity as $n \rightarrow \infty$. Again this enables us to ignore the nonlinear term and choose a nonlinear profile from the solutions to the linear wave equation. In fact, we can make the remainder term absorb those profiles in the cases (I) and (II). 
 \item[III] {(Stable Profile)} If $\lambda_j \in \Rm^+$ and $x_j \in \Rm^d$, then the profile approaches a limiting scale and position as $n \rightarrow \infty$. Therefore the nonlinear profile $U_j$ is a solution to (CP1).
 \item[IV] {(Concentrating Profile)} If $\lambda_j = 0$ and $x_j \in \Rm^d$, then the profile concentrates around a fixed point $x_j$ as $n \rightarrow \infty$. The nonlinear term $\phi(x) |u|^{p_c -1} u$ performs almost the same as $\phi(x_j) |u|^{p_c -1} u$. As a result, the nonlinear profile $U_j$ is a solution to $\partial_t^2 u - \Delta u = \phi (x_j) |u|^{p_c-1} u$.
\end{itemize}

\paragraph{Extraction of a critical element} After the nonlinear profiles are assigned, we can proceed step by step
\begin{itemize}
 \item[(I)] First of all, we show there is at least one non-scattering profile $U$, whose energy is at least $M_0$.
 \item[(II)] By considering the estimates regarding the energy, we show that $U$ is the only nonzero profile and its energy is exactly $M_0$. This also implies that this nonlinear profile is a solution to (CP1). 
 \item[(III)] Finally we prove that the solution $U$ is ``almost periodic'', i.e. the set 
 \[
  \{(U(\cdot, t), \partial_t U(\cdot,t))| t \in I\}
 \]
 is pre-compact in $\dot{H}^1 \times L^2 (\Rm^d)$, where $I$ is the maximal lifespan of $U$, by considering a new sequence of solutions derived from $U$ via time translations and repeating the whole compactness process. A direct corollary is that the maximal lifespan $I$ is actually $\Rm$.
\end{itemize}

\paragraph{Nonexistence of a critical element} Finally we show that a critical element may never exist. 
\begin{itemize}
 \item In the defocusing case, we apply a Morawetz-type inequality, which gives a global integral estimate. This contradicts with the ``almost periodicity''. 
 \item In the focusing case, we follow the same idea used in Kenig and Merle's work. We show that the derivative
\[
 \frac{d}{dt} \left[\int_{\Rm^d}(x \cdot \nabla u) u_t \varphi_R dx + \frac{d}{2} \int_{\Rm^d} \varphi_R u u_t dx\right]
\]
has a negative upper bound but the integral itself is always bounded for all time $t$. This gives us a contradiction when we consider a long time interval. Here $\varphi_R$ is a cut-off function. 
\end{itemize}

\subsection{Structure of this Paper}

This paper is organized as follows: In Section 2 We make a brief review on some preliminary results such as the Strichartz estimates, the local theory and some results regarding the wave equation with a pure power-type nonlinearity. We then consider the linear profile decomposition, define the nonlinear profiles and discuss their properties in Section 3. After finishing the preparation work, we perform the crucial compactness procedure and extract a critical element in Section 4. Next we prove that the critical element can never exist, thus finish the proof of the scattering part of our main theorem in Section 5.  Finally in Sections 6 we prove the blow-up part of our main theorem. Section 7 is an extra, showing an application of our main theorem, about the radial solutions to the focusing, energy-critical shifted wave equation on the 3-dimensional hyperbolic space. 

\section{Preliminary Results}

\subsection{Notations}

\begin{definition} 
 Throughout this paper the notation $F$ represents the function $F(u) = \zeta |u|^{p_c-1} u$. The parameter $\zeta = \pm 1$ is determined by whether the equation in question is focusing ($\zeta =1$) or defocusing ($\zeta = -1$). 
\end{definition}

\begin{definition}[Dilation-translation Operators]
 We define $\Di_\lambda$ to be the dilation operator
 \[
  \Di_\lambda \left(u_0(x) ,u_1(x) \right) = \left(\frac{1}{\lambda^{d/2 - 1}} u_0\left(\frac{x}{\lambda}\right),
\frac{1}{\lambda^{d/2}} u_1\left(\frac{x}{\lambda}\right)\right);
 \]
 and $\Di_{\lambda, x_0}$ to be the dilation-translation operator 
 \[
  \Di_{\lambda,x_0} \left(u_0(x) ,u_1(x) \right) = \left(\frac{1}{\lambda^{d/2 -1}} u_0 \left(\frac{x-x_0}{\lambda}\right),
\frac{1}{\lambda^{d/2}} u_1\left(\frac{x-x_0}{\lambda}\right)\right);
 \]
Here $x$ is the spatial variable of the functions. Similarly we can define these operators in the same manner when both the input and output are written as column vectors. 
\end{definition}

\begin{definition}
 Let $S_L (t)$ be the linear propagation operator, namely we define
\begin{align*}
 &S_L (t_0) (u_0,u_1) = u(t_0)& &S_L (t_0) \begin{pmatrix} u_0\\ u_1
 \end{pmatrix} = 
 \begin{pmatrix} u(t_0)\\ u_t (t_0)
 \end{pmatrix}&
\end{align*}
if $u$ is the solution to the linear wave equation 
\[
 \left\{\begin{array}{l}
  \partial_t^2 u - \Delta u = 0; \\
  (u, \partial_t u)|_{t=0} = (u_0,u_1).
 \end{array}\right.
\]
Similarly $S_{1} (t)$ and $S_{\phi} (t)$ represents the nonlinear propagation operator with nonlinearity $F(u)$ and $\phi F(u)$, respectively. 
\end{definition}

\begin{definition}[The energy] \label{definition of energy with general phi}
Let $\phi: \Rm^d \rightarrow [0,1]$ be a function and $\zeta = \pm 1$. We define $E_{\zeta, \phi} (u_0,u_1)$ to be the energy of the nonlinear wave equation $\partial_t^2 u - \Delta u =  \zeta \phi |u|^{p_c-1} u$ with initial data $(u_0,u_1) \in \dot{H}^1 \times L^2 (\Rm^d)$. In other words, we define 
\[
 E_{\zeta, \phi} (u_0,u_1) = \int_{\Rm^d} \left(\frac{1}{2} |\nabla u_0|^2 + \frac{1}{2} |u_1|^2 - \frac{\zeta}{2^\ast} \phi |u_0|^{2^\ast} \right) dx. 
\]
We may omit $\zeta$ and use $E_\phi (u_0,u_1)$ instead when the choice of $\zeta$ is obvious. 
\end{definition}

\begin{definition} [Space-time Norms]
Assume $1 \leq q, r \leq \infty$ and let $I$ be a time interval. The norm $\|u\|_{L^q L^r (I \times \Rm^d)}$ represents $\left\|\|u(x,t)\|_{L^r (\Rm^d; dx)}\right\|_{L^q (I; dt)}$. In particular, if $1 \leq q,r < \infty$, then we have 
\[
 \|u\|_{L^q L^r (I \times \Rm^d)} = \left(\int_{I} \left(\int_{\Rm^d} |u(x,t)|^r dx \right)^{q/r} dt\right)^{1/q}
\]
\end{definition}

\begin{definition} [Function Spaces] Let $I$ be a time interval. We define the following norms. 
\begin{align*}
 &\|u\|_{Y(I)} = \|u\|_{L^{p_c} L^{2 p_c} (I \times \Rm^d)}& &\|u\|_{Y^\star (I)} = \|\phi^{1/p_c}\|_{Y(I)} = \|\phi^{1/p_c} u\|_{L^{p_c} L^{2p_c} (I \times \Rm^d)}&\\
 &\|(u_0,u_1)\|_H = \|(u_0,u_1)\|_{\dot{H}^1 \times L^2 (\Rm^d)}&
\end{align*}
\end{definition}

\subsection{Local Theory}

In this subsection we briefly discuss the local theory of the nonlinear equation (CP1). Our local theory is based on the Strichartz estimates below. 

\begin{proposition}[Strichartz estimates, see \cite{strichartz}] \label{Strichartz estimates}
There is a constant $C$ determined solely by the dimension $d \in \{3,4,5\}$, such that if $u$ is a solution to the linear wave equation 
 \[
  \left\{\begin{array}{ll} \partial_t^2 u - \Delta u = F(x,t); & (x,t) \in \Rm^d \times I;\\
  (u,\partial_t u)|_{t=0} = (u_0,u_1);&
  \end{array}\right.
 \]
 where $I$ is a time interval containing $0$; then we have the inequality 
 \[
  \sup_{t \in I} \|(u(\cdot,t), \partial_t u(\cdot,t))\|_{\dot{H}^1 \times L^2 (\Rm^d)} + \|u\|_{Y(I)} \leq C \left[ \|(u_0, u_1)\|_{\dot{H}^1 \times L^2 (\Rm^d)} + \|F\|_{L^1 L^2 (I \times \Rm^d)} \right].
 \]
\end{proposition}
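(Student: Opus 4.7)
The plan is to decouple the two quantities on the left-hand side and treat the homogeneous and inhomogeneous parts of Duhamel's formula separately. The $L^\infty_t(\dot H^1\times L^2)$ bound is just an energy estimate: pairing the equation with $\partial_t u$ and integrating over $\Rm^d$ gives $\tfrac{d}{dt}\|(u,\partial_t u)\|_{\dot H^1\times L^2}\leq \|F(\cdot,t)\|_{L^2}$, so integration in time produces the $\|F\|_{L^1 L^2}$ term with constant $1$. Hence I can focus on the space-time norm $\|u\|_{Y(I)}$ with $Y(I)=L^{p_c}L^{2p_c}$.

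First I verify that the pair $(q,r)=(p_c,2p_c)$, with $p_c=(d+2)/(d-2)$, satisfies the scaling relation $\tfrac{1}{q}+\tfrac{d}{r}=\tfrac{d}{2}-1$ (so that $\dot H^1$ data scale correctly into $L^q_t L^r_x$) and the wave admissibility $\tfrac{1}{q}+\tfrac{d-1}{2r}\leq \tfrac{d-1}{4}$, with strict inequality in $d\in\{3,4,5\}$; both are direct computations. Writing
\[
u(t)=\cos(t\sqrt{-\Delta})u_0+\frac{\sin(t\sqrt{-\Delta})}{\sqrt{-\Delta}}u_1+\int_0^t \frac{\sin((t-s)\sqrt{-\Delta})}{\sqrt{-\Delta}}F(\cdot,s)\,ds,
\]
the problem reduces to a homogeneous estimate $\|e^{\pm it\sqrt{-\Delta}}f\|_{L^q_t L^r_x}\lesssim \|f\|_{\dot H^1}$ together with its retarded inhomogeneous counterpart. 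For the homogeneous piece I would invoke the dispersive estimate $\|e^{\pm it\sqrt{-\Delta}}f\|_{L^\infty}\lesssim |t|^{-(d-1)/2}\|f\|_{\dot B^{(d+1)/2}_{1,1}}$, combine it with $L^2$-conservation, and plug the resulting decay into the abstract $TT^\ast$ framework of Ginibre--Velo / Keel--Tao. For the Duhamel term, $TT^\ast$ first yields the diagonal inequality $L^{q'}L^{r'}\to L^q L^r$; the required retarded estimate from $F\in L^1 L^2$ data then follows by Christ--Kiselev, which applies because $q>1$ in all three dimensions.

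The main obstacle is really the dispersive estimate for the half-wave propagator and the ensuing $TT^\ast$ reduction; everything else---the admissibility computation, Duhamel, Christ--Kiselev, and the elementary energy estimate---is structural and quick. Once the three Duhamel contributions are summed and the dimensional constants absorbed, one obtains a single $C=C(d)$ giving the stated inequality, which is the reason the proposition can simply be quoted from \cite{strichartz}.
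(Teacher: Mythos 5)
Your sketch is correct and is essentially the standard argument behind the cited result: the paper itself offers no proof (it quotes Proposition \ref{Strichartz estimates} from \cite{strichartz}), and what you outline --- the energy estimate for the $L^\infty_t(\dot H^1\times L^2)$ piece, the admissibility check for $(q,r)=(p_c,2p_c)$, and the dispersive estimate fed into the $TT^\ast$ machinery for the homogeneous and Duhamel terms --- is exactly the Ginibre--Velo/Keel--Tao route. One small remark: since the forcing is measured in $L^1_tL^2_x$, you do not actually need Christ--Kiselev; writing $\frac{\sin((t-s)\sqrt{-\Delta})}{\sqrt{-\Delta}}F(\cdot,s)$ as a free wave with data $(0,F(\cdot,s))$ and applying Minkowski's integral inequality together with the homogeneous Strichartz bound handles the retarded integral directly, the truncation $s<t$ only decreasing the norm.
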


\begin{remark}
 We can substitute $Y$ norm in Proposition \ref{Strichartz estimates} by any $L^q L^r$ norm if 
\begin{align*}
 &2 \leq q \leq \infty,& &2 \leq r < \infty,& &\frac{1}{q} + \frac{d}{r} = \frac{d}{2} - 1,&
\end{align*}
as shown in the paper \cite{strichartz}. These space-time norms are called (energy-critical) Strichartz norms. 
\end{remark}

\begin{definition} [Solutions]
Let $(u_0,u_1)$ be initial data in $\dot{H}^1 \times L^2 (\Rm^d)$ and $I$ be a time interval containing $0$.  We say $u(t)$ is a solution of (CP1) defined on the time interval $I$, if $(u(t),\partial_t u(t)) \in C(I;{\dot{H}^{1}}\times L^2 (\Rm^d))$ comes with a finite norm $\|u\|_{Y^\star (J)}$ for any bounded closed interval $J \subseteq I$ and satisfies the integral equation
\[
 u(t) = S(t)(u_0,u_1) + \int_0^t \frac{\sin ((t-\tau)\sqrt{-\Delta})}{\sqrt{-\Delta}} \left[\phi F(u(\cdot, \tau))\right] d\tau
\]
holds for all time $t \in I$. 
\end{definition}

\begin{remark} \label{Y Ystar equivalent}
We can substitute $Y^\star (I)$ norm above with $Y(I)$ norm to make an equivalent definition, by applying the Strichartz estimate on the integral equation above  
\begin{align*}
 \|u\|_{Y^\star (I)} \leq  \|u\|_{Y(I)} \leq & C \left[\|(u_0,u_1)\|_{\dot{H}^1 \times L^2 (\Rm^d)} + \|\phi F(u)\|_{L^1 L^2 (I \times \Rm^d)} \right]\\
  = & C \left[\|(u_0,u_1)\|_{\dot{H}^1 \times L^2 (\Rm^d)} + \|u\|_{Y^\star(I)}^{p_c} \right]
\end{align*}
\end{remark}
\noindent Using the inequalities 
\begin{align*}
 \|\phi F(u)\|_{L^1 L^2 (I \times \Rm^d)} & = \|u\|_{Y^\star (I)}^{p_c};\\
 \|\phi F(u_1) - \phi F(u_2)\|_{L^1 L^2 (I \times \Rm^d)} & \leq C \|u_1 -u_2\|_{Y^\star(I)} \left(\|u_1\|_{Y^\star (I)}^{p_c-1} + \|u_2\|_{Y^\star}^{p_c -1}\right);
\end{align*} 
the Strichartz estimate and a fixed-point argument, we have the following local theory. (Our argument is similar to those in a lot of earlier papers. Therefore we only give important statements but omit most of the proof here. Please see, for instance, \cite{ls} for more details.)

\begin{proposition} [Local solution]
For any initial data $(u_0,u_1) \in \dot{H}^{1} \times L^2 (\Rm^d)$, there is a maximal interval $(-T_{-}(u_0,u_1), T_{+}(u_0,u_1))$ in which the Cauchy problem (CP1) has a solution.
\end{proposition}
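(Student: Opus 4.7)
The plan is to prove local existence by a standard Picard/contraction argument applied to the integral equation, then extend by maximality. The key point is that the Strichartz estimate of Proposition \ref{Strichartz estimates} together with the two nonlinear estimates
\[
 \|\phi F(u)\|_{L^1 L^2 (I \times \Rm^d)} = \|u\|_{Y^\star(I)}^{p_c}, \qquad \|\phi F(u_1) - \phi F(u_2)\|_{L^1 L^2 (I \times \Rm^d)} \leq C \|u_1-u_2\|_{Y^\star(I)}\left(\|u_1\|_{Y^\star(I)}^{p_c-1} + \|u_2\|_{Y^\star(I)}^{p_c-1}\right)
\]
reduces the whole matter to a fixed-point argument in the $Y^\star$ (equivalently $Y$, since $\phi \leq 1$) norm.

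First I would fix initial data $(u_0,u_1) \in \dot H^1 \times L^2$ and observe that the free evolution satisfies $\|S_L(t)(u_0,u_1)\|_{Y(\Rm)} \leq C\|(u_0,u_1)\|_H$ by Strichartz. By dominated convergence, for any prescribed small $\eta > 0$ one can choose $T > 0$ such that $\|S_L(t)(u_0,u_1)\|_{Y^\star([-T,T])} \leq \eta$. On the interval $I = [-T,T]$ I define the map
\[
 \Phi(u)(t) = S_L(t)(u_0,u_1) + \int_0^t \frac{\sin((t-\tau)\sqrt{-\Delta})}{\sqrt{-\Delta}} [\phi F(u(\cdot,\tau))]\, d\tau
\]
and work in the closed ball $B = \{u \in C(I; \dot H^1 \times L^2) : \|u\|_{Y^\star(I)} \leq 2\eta\}$. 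A direct application of Strichartz together with the two inequalities above gives $\|\Phi(u)\|_{Y^\star(I)} \leq \eta + C(2\eta)^{p_c}$ and $\|\Phi(u_1)-\Phi(u_2)\|_{Y^\star(I)} \leq C(2\eta)^{p_c-1}\|u_1-u_2\|_{Y^\star(I)}$; choosing $\eta$ small (depending only on $C$ and $p_c$) makes $\Phi$ a contraction on $B$. The unique fixed point is a solution on $I$, and continuity of $(u,\partial_t u)$ into $\dot H^1 \times L^2$ follows from the Strichartz bound on the full energy-space norm.

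Next I would define
\[
 T_+(u_0,u_1) = \sup\{T > 0 : \text{(CP1) admits a solution on } [0,T]\text{ in the above sense}\},
\]
and similarly $T_-(u_0,u_1)$. Uniqueness from the contraction argument, extended by a standard covering argument on any compact sub-interval, guarantees that two solutions on overlapping intervals agree; gluing them produces a single solution on the open maximal interval $(-T_-,T_+)$, with $\|u\|_{Y^\star(J)} < \infty$ on every compact $J \subset (-T_-,T_+)$ as required by the definition of solution.

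Honestly, for this proposition there is no real obstacle: the coefficient $\phi$ causes no new difficulty since $0 < \phi \leq 1$ forces $\|u\|_{Y^\star} \leq \|u\|_Y$, so the fixed-point argument is identical to the one for the pure power nonlinearity treated in \cite{ls}. The only thing worth remarking is that one cannot, in general, obtain a lower bound on $T_\pm$ purely in terms of $\|(u_0,u_1)\|_H$ (because the smallness of $\|S_L(t)(u_0,u_1)\|_{Y^\star([-T,T])}$ depends on the profile, not just the norm, of the initial data); this is however already the case in \cite{kenig,ls} and not an issue for the bare existence statement being claimed here.
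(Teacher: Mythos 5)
Your proof is correct and follows exactly the approach the paper indicates: the paper itself states that this proposition follows from the Strichartz estimate, the two nonlinear estimates for $\|\phi F(u)\|_{L^1L^2}$, and a fixed-point argument, and then omits the details by reference to \cite{ls}. Your write-up supplies precisely those standard details (small-interval Strichartz smallness via dominated convergence, contraction on a $Y^\star$-ball, and the usual sup/gluing construction of the maximal interval), and your closing remark that $\phi\le 1$ makes this no harder than the pure power case is exactly the point the paper is implicitly relying on.
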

\begin{proposition} [Scattering with small data] \label{scattering with small data}
There exists $\delta > 0$ such that if the norm of the initial data $\|(u_0,u_1)\|_{\dot{H}^{1} \times L^2 (\Rm^d)} < \delta$, then the Cauchy problem (CP1) has a global-in-time solution $u$ with $\|u\|_{Y(\Rm)} \lesssim \|(u_0,u_1)\|_{\dot{H}^{1} \times L^2 (\Rm^d)}$.
\end{proposition}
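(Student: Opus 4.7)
The plan is to apply a contraction mapping argument to the Duhamel integral formulation directly on the global time interval $I = \mathbb{R}$. Define
\[
\Phi(u)(t) = S_L(t)(u_0,u_1) + \int_0^t \frac{\sin((t-\tau)\sqrt{-\Delta})}{\sqrt{-\Delta}}\bigl[\phi F(u(\cdot,\tau))\bigr]\,d\tau
\]
on the closed ball $B_R = \{u : \|u\|_{Y(\mathbb{R})} \leq R\}$, viewed as a complete metric space under the $Y(\mathbb{R})$-distance, with $R$ a small multiple of $\|(u_0,u_1)\|_H$ to be fixed below.

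The estimates needed are three: (a) Proposition \ref{Strichartz estimates} gives both $\|S_L(t)(u_0,u_1)\|_{Y(\mathbb{R})} \leq C\|(u_0,u_1)\|_H$ and the Duhamel bound $\lesssim \|\phi F(u)\|_{L^1 L^2}$; (b) the pointwise bound $0 < \phi \leq 1$ yields $\|u\|_{Y^\star(\mathbb{R})} \leq \|u\|_{Y(\mathbb{R})}$; (c) the nonlinear estimates recorded in Remark \ref{Y Ystar equivalent}, namely $\|\phi F(u)\|_{L^1 L^2} \leq \|u\|_{Y^\star}^{p_c} \leq \|u\|_Y^{p_c}$ and $\|\phi F(u) - \phi F(v)\|_{L^1 L^2} \leq C\|u-v\|_Y\bigl(\|u\|_Y^{p_c-1} + \|v\|_Y^{p_c-1}\bigr)$. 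Chaining these together produces
\[
\|\Phi(u)\|_{Y(\mathbb{R})} \leq C\|(u_0,u_1)\|_H + C R^{p_c}, \qquad \|\Phi(u) - \Phi(v)\|_{Y(\mathbb{R})} \leq 2C R^{p_c-1}\|u-v\|_{Y(\mathbb{R})}.
\]
Choosing $R = 2C\|(u_0,u_1)\|_H$ and then $\delta > 0$ small enough that $2C R^{p_c-1} \leq 1/2$ whenever $\|(u_0,u_1)\|_H < \delta$ makes $\Phi$ both a self-map of $B_R$ and a $\tfrac12$-contraction. Banach's fixed-point theorem produces a unique solution $u \in B_R$ to the integral equation, with $\|u\|_{Y(\mathbb{R})} \leq R \lesssim \|(u_0,u_1)\|_H$; continuity of $(u,\partial_t u)$ in $\dot H^1 \times L^2$ follows from one more application of Proposition \ref{Strichartz estimates}.

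I do not anticipate a real obstacle: the argument is entirely standard because $0 < \phi \leq 1$ means the nonlinearity $\phi F(u)$ is pointwise dominated by the pure power $|u|^{p_c}$, so no new analytic difficulty is introduced by the variable coefficient. The only mildly noteworthy point is that because Proposition \ref{Strichartz estimates} is already a global-in-time estimate, the constructed solution is automatically global with an a priori $Y(\mathbb{R})$-bound, and no continuation or blow-up alternative argument is needed to upgrade a local solution.
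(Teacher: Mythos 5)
Your proof is correct and follows exactly the route the paper indicates: the paper states the two nonlinear estimates $\|\phi F(u)\|_{L^1 L^2}=\|u\|_{Y^\star}^{p_c}$ and the associated Lipschitz bound, invokes the Strichartz estimate and ``a fixed-point argument,'' and then simply records the proposition without further detail (citing \cite{ls}); you have supplied precisely the omitted contraction-mapping argument, including the correct observation that working globally in $Y(\Rm)$ from the start makes the solution automatically global, and that one extra Strichartz application recovers the $C(\Rm;\dot H^1\times L^2)$ regularity required by the definition of a solution.
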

\begin{lemma} [Standard finite blow-up criterion]
Let $u$ be a solution to (CP1) with a maximal lifespan $(-T_-, T_+)$. If $T_{+} < \infty$, 
then $\|u\|_{Y^\star ([0,T_{+}))} = \infty$.
\end{lemma}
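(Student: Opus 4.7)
The plan is to argue by contrapositive: assuming $A := \|u\|_{Y^\star([0, T_+))} < \infty$, I would construct a strict forward extension of $u$ past $T_+$, which contradicts the maximality of $T_+$. The first ingredient is a uniform $\dot H^1 \times L^2$ bound. Since $p_c < \infty$, absolute continuity of the $L^{p_c}L^{2p_c}$ integral partitions $[0,T_+)$ into finitely many subintervals $I_k = [t_k, t_{k+1}]$ on which $\|u\|_{Y^\star(I_k)} < \eta$, where $\eta > 0$ is chosen small with respect to the Strichartz constant $C$. Applying the Strichartz inequality to the integral equation on each $I_k$ in succession, using the pointwise identity $\|\phi F(u)\|_{L^1L^2(I_k \times \Rm^d)} = \|u\|_{Y^\star(I_k)}^{p_c} \leq \eta^{p_c}$, I obtain a uniform estimate $\sup_{t \in [0,T_+)} \|(u(\cdot,t), \partial_t u(\cdot,t))\|_{\dot H^1 \times L^2} \leq M < \infty$.

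Next I would show that $(u(t), \partial_t u(t))$ is Cauchy in $\dot H^1 \times L^2$ as $t \to T_+^-$. For $0 < s < t < T_+$, comparing the two Duhamel representations and invoking the Strichartz estimate on the interval $[s,t]$ gives
\[
\|S_L(-t)(u(t),\partial_t u(t)) - S_L(-s)(u(s),\partial_t u(s))\|_{\dot H^1 \times L^2} \leq C\|\phi F(u)\|_{L^1 L^2([s,t]\times \Rm^d)} \leq C\|u\|_{Y^\star([s,t])}^{p_c}.
\]
Because $\|u\|_{Y^\star([0,T_+))} < \infty$ and $p_c < \infty$, dominated convergence forces the right side to zero as $s,t \to T_+^-$. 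Since $S_L(\tau)$ is an isometry on $\dot H^1 \times L^2$, this yields a limit $(v_0, v_1) \in \dot H^1 \times L^2$ such that $(u(t), \partial_t u(t)) \to (v_0, v_1)$ in $\dot H^1 \times L^2$ as $t \to T_+^-$.

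Finally, I would apply the local existence proposition to the Cauchy problem (CP1) starting at time $T_+$ with data $(v_0, v_1)$, obtaining a solution $\tilde u$ on $[T_+, T_+ + \delta)$ for some $\delta > 0$. Gluing $\tilde u$ with $u$ at $t = T_+$ (which is consistent because the two agree in the strong $\dot H^1 \times L^2$ limit, and because both sides of the Duhamel identity are continuous in $t$), I get a solution of (CP1) defined on the strictly larger interval $(-T_-, T_+ + \delta)$, contradicting the maximality of $T_+$. The part I expect to be the main technical obstacle is the Cauchy step: one must be careful that the $Y^\star$ smallness really does control the $\dot H^1 \times L^2$ differences uniformly down to $T_+$, which is why the Strichartz inequality must be applied in the Duhamel comparison rather than on each interval separately. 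Once this Cauchy property is established, the uniqueness built into the fixed-point argument behind local well-posedness takes care of the gluing.
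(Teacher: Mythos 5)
The paper does not supply its own proof of this lemma, deferring to standard references such as \cite{ls}. Your argument is the standard continuation-by-limit proof and it is correct: the finite $Y^\star$ norm forces $\|\phi F(u)\|_{L^1 L^2([s,t]\times\Rm^d)}=\|u\|_{Y^\star([s,t])}^{p_c}\to 0$ as $s,t\to T_+^-$, so $S_L(-t)(u(t),\partial_t u(t))$ is Cauchy in $\dot H^1\times L^2$; together with the strong continuity of $S_L$ this gives a strong limit $(v_0,v_1)$, and local well-posedness at $t=T_+$ with the Duhamel gluing identity yields a contradiction with the maximality of $T_+$.

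Two small comments you may wish to keep in mind. First, the uniform $\dot H^1\times L^2$ bound in your first paragraph is in fact not needed: the Cauchy estimate in your second paragraph follows directly from unitarity of $S_L$ and Minkowski's integral inequality (no Strichartz, no smallness parameter $\eta$), and local well-posedness then extends the solution for any data, regardless of its size. Second, the passage from ``$S_L(-t)(u(t),\partial_t u(t))$ converges to $(w_0,w_1)$'' to ``$(u(t),\partial_t u(t))$ converges to $(v_0,v_1)=S_L(T_+)(w_0,w_1)$'' uses the strong continuity of the linear flow, not just its isometry; this is worth making explicit since the two statements are not the same.
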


\begin{definition} [Scattering] \label{def of scattering}
 We say a solution $u$ to (CP1) with a maximal lifespan $I = (-T_-,T_+)$ scatters in the positive time direction, if $T_+ = \infty$ and there exists a pair $(v_0,v_1) \in \dot{H}^1 \times L^2 (\Rm^d)$, such that 
\[
 \lim_{t \rightarrow + \infty} \left\|\begin{pmatrix} u(\cdot, t)\\ \partial_t u(\cdot, t) \end{pmatrix} - S_L (t) \begin{pmatrix} v_0\\ v_1 \end{pmatrix} \right\|_{\dot{H}^1 \times L^2 (\Rm^d)} = 0. 
\]
In fact, the scattering can be characterized by a more convenient but equivalent condition: $\|u\|_{Y^\star ([T', T_+))} < \infty$, or still equivalently, $\|u\|_{Y([T', T_+))} < \infty$. Here $T'$ is an arbitrary time in $I$.  The scattering at the negative time direction can be defined in the same manner. 
\end{definition}

\begin{theorem} [Long-time perturbation theory, see also \cite{ctao, kenig, kenig1, shen2}] \label{perturbation theory in Y star}
Let $M$ be a positive constant. There exists a constant $\eps_0 = \eps_0 (M)>0$, such that if an approximation solution $\tilde{u}$ defined on $\Rm^d \times I$ ($0\in I$) and a pair of initial data $(u_0,u_1) \in \dot{H}^{1} \times L^2 (\Rm^d)$ satisfy
\begin{align*}
 & (\partial_t^2 - \Delta) (\tilde{u}) - \phi F(\tilde{u}) = e(x,t), \qquad (x,t) \in \Rm^d \times I;\\
 &\|\tilde{u}\|_{Y^\star(I)} < M;\qquad\qquad \|(\tilde{u}(0),\partial_t \tilde{u}(0))\|_{\dot{H}^{1} \times L^2 (\Rm^d)} < \infty;\\
 &\eps \doteq \|e(x,t)\|_{L^1 L^2(I\times \Rm^d)}+ \|S(t)(u_0-\tilde{u}(0),u_1 - \partial_t \tilde{u}(0))\|_{Y^\star(I)} \leq \eps_0;
\end{align*}
then there exists a solution $u(x,t)$ of (CP1) defined in the interval $I$ with the initial data $(u_0,u_1)$ and satisfying
\[
 \|u(x,t) - \tilde{u}(x,t)\|_{Y^\star (I)} \leq C(M) \eps.
\]
\[
 \sup_{t \in I} \left\|\begin{pmatrix} u(t)\\ \partial_t u(t)\end{pmatrix} - \begin{pmatrix} \tilde{u}(t)\\ \partial_t \tilde{u}(t)\end{pmatrix} - S_L (t)\begin{pmatrix} u_0 - \tilde{u}(0)\\ u_1 -\partial_t \tilde{u}(0)\end{pmatrix}
 \right\|_{\dot{H}^{1} \times L^2 (\Rm^d)} \leq C(M)\eps.
\]
\end{theorem}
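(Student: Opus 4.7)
The plan is to adapt the standard long-time perturbation argument (as in Kenig--Merle) to our variable coefficient $\phi$. The heart of the matter is that the $L^1 L^2$-norm of $\phi(F(\tilde u + w)-F(\tilde u))$ behaves just like $\|w\|_{Y^\star}(\|w\|_{Y^\star}^{p_c-1}+\|\tilde u\|_{Y^\star}^{p_c-1})$ since $\phi \leq 1$ and $|F(a+b)-F(a)| \lesssim |b|(|a|^{p_c-1}+|b|^{p_c-1})$; once we have this, the argument proceeds exactly as in the pure power case, with the $Y^\star$ norm playing the role that $Y$ plays there.

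First I would partition $I$ into finitely many consecutive subintervals $I = I_1 \cup I_2 \cup \cdots \cup I_N$ with the property that $\|\tilde u\|_{Y^\star(I_j)} < \eta$ for every $j$, where $\eta = \eta(M)$ is a small parameter to be fixed and $N = N(M,\eta)$ depends only on $M$ and $\eta$. On the first subinterval $I_1$, I set $w = u - \tilde u$ so that $w$ formally satisfies
\[
(\partial_t^2 - \Delta) w = \phi\bigl(F(\tilde u + w) - F(\tilde u)\bigr) - e, \qquad (w,\partial_t w)|_{t=0} = (u_0 - \tilde u(0),\, u_1 - \partial_t\tilde u(0)).
\]
By Strichartz (Proposition \ref{Strichartz estimates}) together with the nonlinear difference inequality and the smallness of $\|\tilde u\|_{Y^\star(I_1)}$, a standard contraction/bootstrap argument in the Banach space $\{w: \|w\|_{Y^\star(I_1)} < \text{const} \cdot \eps\}$ produces a solution $w$ on $I_1$ with $\|w\|_{Y^\star(I_1)} \leq C\eps$, provided $\eps \leq \eps_1$ for some $\eps_1 = \eps_1(\eta)$. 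Moreover, the Strichartz estimate also controls the $\dot H^1 \times L^2$ norm of $(w,\partial_t w) - S_L(t)(u_0 - \tilde u(0), u_1 - \partial_t\tilde u(0))$ by $C\eps$ on $I_1$.

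Next I would iterate. At the right endpoint of $I_1$, the free evolution of the new ``initial error'' from the endpoint of $I_1$ into $I_2$ can be estimated, via Strichartz applied to the difference equation on $I_1$, by $C\eps$ plus a factor accumulating from the previous step. Repeating the fixed-point argument on $I_2$ with this updated error bound, then on $I_3$, and so on, the constants multiply by a factor $A(\eta)$ at each step, producing a final bound of the form $\|u - \tilde u\|_{Y^\star(I)} \leq A(\eta)^N \eps$ and similarly for the $\dot H^1 \times L^2$ difference. Choosing $\eps_0(M)$ so that $A(\eta)^N \eps_0 \leq$ the threshold needed to apply the local fixed-point argument on every subinterval closes the argument and yields the desired constant $C(M)$.

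The main obstacle is the bookkeeping of error accumulation across the $N(M)$ subintervals: each step introduces a new forcing term coming from the previous step's error, so one must verify that at the start of subinterval $I_{j+1}$ the sum of the free evolution of the endpoint error plus the forcing $\|e\|_{L^1L^2(I_{j+1}\times\Rm^d)}$ still falls below the smallness threshold of the local argument. This forces $\eps_0$ to depend on $M$ through the number $N(M)$ of subintervals, but since $N(M)$ is finite for each $M$, the dependence $\eps_0 = \eps_0(M)$ is legitimate. The only place the variable coefficient $\phi$ enters is through the bound $\phi \leq 1$, so the entire proof is structurally identical to the constant-coefficient long-time perturbation theorem cited in the statement.
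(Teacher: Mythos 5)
Your proposal is correct and follows essentially the same route as the paper's proof: establish the conclusion when the $Y^\star$ norm of $\tilde{u}$ on the interval is small (the paper via a direct Strichartz-plus-continuity-in-$T_1,T_2$ bootstrap, you via a contraction mapping, which is an interchangeable local device), then partition $I$ into $N(M)$ subintervals where $\|\tilde{u}\|_{Y^\star(I_j)}$ is small and iterate, tracking the geometric accumulation of error constants, which forces $\eps_0$ to depend on $M$ through $N(M)$. The observation that $\phi$ enters only through $\phi \le 1$ so that $\|\phi(F(a)-F(b))\|_{L^1L^2}$ obeys the same difference estimate in $Y^\star$ as in the constant-coefficient case is precisely the point the paper exploits, so no genuinely new idea or different decomposition is involved.
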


\begin{proof} 
Let us first prove the perturbation theory when $M$ is sufficiently small. Let $I_1$ be the maximal lifespan of the solution $u(x,t)$ to the
equation (CP1) with the given initial data $(u_0,u_1)$ and assume $[-T_1,T_2] \subseteq I\cap I_1$. By the Strichartz estimates, we have \footnote{The letter C in the argument represents a constant depending solely on the dimension $d$, it may represent different constants in different places.}
\begin{align*}
&\|\tilde{u}-u\|_{Y^\star ([-T_1,T_2])} \\
&\quad \leq \|S_L (t)(u_0-\tilde{u}(0), u_1-\tilde{u}(0))\|_{Y^\star ([-T_1,T_2])} + C
\|e + \phi F (\tilde{u})- \phi F(u)\|_{L^1 L^2([-T_1,T_2] \times \Rm^d)}\\
&\quad \leq \eps + C \|e\|_{L^1 L^2([-T_1,T_2] \times \Rm^d)} + C \|F(\phi^{1/p_c} \tilde{u}) - F(\phi^{1/p_c} u)\|_{L^1 L^2([-T_1,T_2] \times \Rm^d)}\\
&\quad \leq \eps + C \eps + C \|\phi^{1/p_c} (\tilde{u}-u)\|_{Y([-T_1,T_2])} (\|\phi^{1/p_c}\tilde{u}\|_{Y([-T_1,T_2])}^{p_c -1} +
\|\phi^{1/p_c}(\tilde{u}-u)\|_{Y([-T_1,T_2])}^{p_c-1})\\
&\quad \leq C \eps + C \|\tilde{u}-u\|_{Y^\star ([-T_1,T_2])} (M^{p_c-1} + \|\tilde{u}-u\|_{Y^\star ([-T_1,T_2])}^{p_c-1}).
\end{align*}
By a continuity argument in $T_1$ and $T_2$, there exist $M_0 = M_0 (d)>0$ and $\eps_0 = \eps_0 (d) >0$, such that if $M \leq M_0$ and $\eps \leq \eps_0$, we have
\[
 \|\tilde{u}-u\|_{Y^\star ([-T_1,T_2])} \leq C(d) \eps.
\]
Observing that this estimate does not depend on $T_1$ or $T_2$, we are actually able to conclude $I \subseteq I_1$
by the standard blow-up criterion and obtain
\[
 \|\tilde{u}-u\|_{Y^\star (I)} \leq C(d) \eps.
\]
In addition, by the Strichartz estimate we have 
\begin{align*}
 \sup_{t \in I} & \left\|\begin{pmatrix} u(t)\\ \partial_t u(t)\end{pmatrix} - \begin{pmatrix} \tilde{u}(t)\\ \partial_t \tilde{u}(t)\end{pmatrix}
 - S_L (t) \begin{pmatrix} u_0 - \tilde{u}(0)\\ u_1 -\partial_t \tilde{u}(0)\end{pmatrix} \right\|_{\dot{H}^{1} \times L^2 (\Rm^d)}\\
 &\leq C \| \phi F(u) - \phi F(\tilde{u}) - e\|_{L^1 L^2 (I\times \Rm^d)}\\
 &\leq C \left(\|e\|_{L^1 L^2 (I\times \Rm^d)} + \|F(\phi^{1/p_c} u) - F(\phi^{1/p_c} \tilde{u})\|_{L^1 L^2 (I\times \Rm^d)}\right)\\
 &\leq C \left[\eps + \|u -\tilde{u}\|_{Y^\star (I)}\left(\|\tilde{u}\|_{Y^\star (I)}^{p_c-1} + \|u-\tilde{u}\|_{Y^\star (I)}^{p_c-1}\right)\right]\\
 &\leq C \eps.
\end{align*}
This finishes the proof as $M \leq M_0$. To deal with the general case, we can separate the time interval $I$ into finite number of subintervals $\{I_j\}_{1\leq j\leq n}$, so that $\|\tilde{u}\|_{Y^{\star}(I_j)} < M_0$, and then iterate our argument above.
\end{proof}

\begin{remark} \label{lifespan lower bound for compact set}
If $K$ is a compact subset of the space $\dot{H}^{1} \times L^2 (\Rm^d)$, then there exists $T = T(K) > 0$ such that for any $(u_0,u_1) \in K$, we have $T_{+}(u_0,u_1) > T(K)$ and $T_{-}(u_0,u_1) > T(K)$. This is a direct corollary from the perturbation theory.
\end{remark}

\subsection{Ground States and the Energy Trapping}

In this subsection we make a brief review on the properties of ground states for the equation (CP0) and understand the ``energy trapping'' phenomenon. Let us first recall a particular ground state
\[
  W(x) = \frac{1}{\left(1 + \frac{|x|^2}{d(d-2)}\right)^{\frac{d-2}{2}}}.
\]
The ground state is not unique, because given any constants $\lambda \in \Rm^+$ and $x_0 \in \Rm^d$, the function 
\[
 W_{\lambda, x_0} (x) = \frac{1}{\lambda^{\frac{d-2}{2}}} W \left(\frac{x-x_0}{\lambda}\right)
\]
is also a ground state. Any ground state constructed in this way share the same $\dot{H}^1$ and $L^{2^\ast}$ norms as $W$. The ground state $W$, or any other ground state we constructed above, can be characterized by the following lemma. 

\begin{lemma}[Please see \cite{best constant}] \label{best const}
 The function $W$ gives the best constant $C_d$ in the Sobolev embedding $\dot{H}^1(\Rm^d) \hookrightarrow L^{2^\ast}(\Rm^d)$. Namely, the inequality 
 \[ 
  \|u\|_{L^{2^\ast}} \leq C_d \|\nabla u\|_{L^2}
 \]
holds for any function $u \in \dot{H}^1 (\Rm^d)$ and becomes an equality for $u = W$. 
\end{lemma}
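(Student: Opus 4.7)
The plan is to follow the classical approach of Aubin and Talenti. First, I would reduce to the class of radial, nonnegative, radially nonincreasing functions via symmetric decreasing rearrangement: for any $u \in \dot{H}^1(\Rm^d)$, its rearrangement $u^\ast$ satisfies $\|u^\ast\|_{L^{2^\ast}} = \|u\|_{L^{2^\ast}}$ while $\|\nabla u^\ast\|_{L^2} \leq \|\nabla u\|_{L^2}$ by the Polya--Szego inequality. Hence the optimal constant
\[
 C_d^{-1} = \inf\left\{\frac{\|\nabla u\|_{L^2}}{\|u\|_{L^{2^\ast}}}:\ u \in \dot{H}^1(\Rm^d)\setminus\{0\}\right\}
\]
is unchanged if we restrict to such symmetric functions, eliminating the translation degree of freedom.

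Next, I would establish the existence of a minimizer. Since the Sobolev quotient is invariant under the critical dilation $u \mapsto \lambda^{(d-2)/2} u(\lambda\, \cdot)$, minimizing sequences may concentrate at a point or spread to infinity in scale. After rescaling each member of a minimizing sequence so that $\|u_n\|_{L^{2^\ast}}=1$ is attained by the radially decreasing profile on the unit ball (or any fixed normalization), one can invoke either Lions' concentration-compactness principle or a direct Rellich-type compactness argument adapted to radial functions to extract a nonzero weak limit $u$ that is in fact an extremizer.

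Such a minimizer satisfies the Euler--Lagrange equation $-\Delta u = \mu\, u^{2^\ast - 1}$ for some Lagrange multiplier $\mu > 0$. Absorbing $\mu$ via an extra rescaling reduces to $-\Delta u = |u|^{p_c - 1} u$ with $u > 0$ and smooth by elliptic regularity. The Emden--Fowler substitution $u(r) = r^{-(d-2)/2} v(\log r)$ turns the radial ODE into an autonomous first-order Hamiltonian system, whose unique positive orbit corresponding to a function in $\dot{H}^1(\Rm^d)$ yields precisely the formula for $W$, up to translations and dilations. Substituting $W$ into the Sobolev quotient delivers the sharp value $C_d = \|W\|_{L^{2^\ast}}/\|\nabla W\|_{L^2}$ and confirms that $W$ saturates the inequality.

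The principal obstacle is the failure of compactness of the embedding $\dot{H}^1 \hookrightarrow L^{2^\ast}$ at the critical exponent: both spatial translations and critical rescalings preserve both sides of the Sobolev quotient, so bare weak convergence need not yield a nontrivial limit. This is exactly the situation that motivated the concentration-compactness framework, and handling it rigorously is the heart of the argument; once the minimizer is produced, the remaining ODE analysis and classification are essentially algebraic.
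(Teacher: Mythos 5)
The paper does not prove this lemma at all: it simply cites Talenti's paper (\cite{best constant}), so there is no in-paper argument to compare against. Your sketch is the standard variational proof of the cited result, and in outline it is sound: rearrangement plus P\'olya--Szeg\H{o} to reduce to radial nonincreasing functions, existence of an extremizer, the Euler--Lagrange equation $-\Delta u = \mu u^{2^\ast-1}$, and identification of the radial positive finite-energy solution with $W$ up to dilation, which yields $C_d = \|W\|_{L^{2^\ast}}/\|\nabla W\|_{L^2}$. This is genuinely more than the paper offers (a citation), and it is essentially the Aubin--Lions-style route found in the literature; Talenti's original proof is different in one respect worth knowing: he avoids any compactness machinery by reducing, after symmetrization, to a one-dimensional variational problem (Bliss's inequality) that is solved explicitly, which is why the sharp constant could be obtained in 1976, well before concentration-compactness existed.

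One step in your sketch would fail as stated: the proposed alternative of ``a direct Rellich-type compactness argument adapted to radial functions.'' Radial improvement (Strauss-type compactness) gives compact embeddings only for subcritical exponents; at the critical exponent $2^\ast$ the dilation symmetry $u \mapsto \lambda^{(d-2)/2}u(\lambda\,\cdot)$ acts within the class of radial nonincreasing functions and preserves both sides of the quotient, so no compact embedding into $L^{2^\ast}$ can hold and a minimizing sequence can still concentrate at the origin or flatten out. You acknowledge this loss of compactness in your closing paragraph, but it means the Rellich-type option is not actually available: you must either run Lions' concentration-compactness (or Lieb's/the missing-mass argument) honestly, or follow Talenti's explicit one-dimensional reduction. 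With that alternative removed, the remaining plan is correct, modulo the standard details (taking $|u|$ to get a nonnegative minimizer, elliptic regularity and the strong maximum principle for positivity, and the ODE classification of positive radial solutions via the Emden--Fowler substitution).
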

\begin{remark}
Because the function $W$ is a solution to $-\Delta W = |W|^{\frac{4}{d-2}} W$, we also have 
\[
 \int_{\Rm^d} |\nabla W|^2 dx = \int_{\Rm^d} |W|^{2^\ast} dx \Longrightarrow \|\nabla W\|_{L^2}^2 = \|W\|_{L^{2^\ast}}^{2^\ast} = C_d^{2^\ast} \|\nabla W\|_{L^2}^{2^\ast}
\]
As a result, we have $C_d^{2^\ast} \|\nabla W\|_{L^2}^{2^\ast - 2} = 1$ and $E_1 (W,0)  = (1/d) \|\nabla W\|_{L^2}^2 > 0$. 
\end{remark}

\begin{proposition} [Energy Trapping, see also \cite{kenig, kenig1}] \label{energy trapping}
Let $\delta >0$. If $u$ is a solution to (CP1) in the focusing case with initial data $(u_0,u_1)$ so that 
\begin{align*}
 &\|\nabla u_0\|_{L^2} < \|\nabla W\|_{L^2},& &E_\phi (u_0,u_1) < (1 -\delta) E_1 (W,0);&
\end{align*}
then for any time $t$ in the maximal lifespan $I$ of $u$ we have 
\begin{align}
 &\|(u(\cdot,t), \partial_t u(\cdot, t))\|_{\dot{H}^1 \times L^2 (\Rm^d)} < (1 - 2\delta/d)^{1/2} \|\nabla W\|_{L^2} \label{energy trap 1} \\
  &\int_{\Rm^d} \left(|\nabla u(x,t)|^2 - |u(x,t)|^{2^\ast}\right) dx \simeq_\delta \int_{\Rm^d}   |\nabla u(x,t)|^2  dx \label{energy trap 2} \\
 &\int_{\Rm^d} |u_t (x,t)|^2 dx + \int_{\Rm^d} \left(|\nabla u(x,t)|^2 - |u(x,t)|^{2^\ast}\right) dx \simeq_\delta E_\phi (u_0,u_1). \label{energy trap 3} 
\end{align}
\end{proposition}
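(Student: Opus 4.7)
The plan is to adapt the Kenig--Merle energy-trapping lemma via the structural observation that $0 < \phi \leq 1$ turns $E_1$ into a lower barrier for the conserved $E_\phi$, reducing everything to a one-variable variational inequality in $y(t) = \|\nabla u(\cdot,t)\|_{L^2}^2$.

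First, combining $\phi \leq 1$, the Sobolev bound of Lemma \ref{best const}, and energy conservation yields
\[
 \frac{1}{2}\|\partial_t u(t)\|_{L^2}^2 + y^\ast g\bigl(y(t)/y^\ast\bigr) \leq E_1(u,\partial_t u) \leq E_\phi(u_0,u_1) < (1-\delta)\frac{y^\ast}{d},
\]
where $y^\ast = \|\nabla W\|_{L^2}^2$, $g(s) = s/2 - s^{2^\ast/2}/2^\ast$, and I have used the identity $C_d^{2^\ast}(y^\ast)^{(2^\ast-2)/2} = 1$. A one-variable computation gives $g'(s) = \tfrac{1}{2}(1 - s^{2/(d-2)})$, so $g$ is strictly increasing on $[0,1]$ with maximum $g(1) = 1/d = E_1(W,0)/y^\ast$.

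Second, I would run the standard continuity bootstrap on the maximal interval $I$: the map $t \mapsto y(t)$ is continuous since $u \in C(I; \dot{H}^1)$, the hypothesis gives $y(0) < y^\ast$, and the strict inequality $g(y(t)/y^\ast) < (1-\delta)/d < g(1)$ prevents $y(t)$ from ever reaching $y^\ast$. The intermediate value theorem then forces $\|\nabla u(\cdot,t)\|_{L^2} < \|\nabla W\|_{L^2}$ throughout $I$.

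Third, the three claimed estimates drop out algebraically. For \eqref{energy trap 1}, using the identity $s - 2g(s) = 2 s^{2^\ast/2}/2^\ast$ together with $s(t) \leq 1$,
\[
 \|\partial_t u\|_{L^2}^2 + \|\nabla u\|_{L^2}^2 < \frac{2(1-\delta) y^\ast}{d} + \frac{2 y^\ast}{2^\ast} = \left(1 - \frac{2\delta}{d}\right)\|\nabla W\|_{L^2}^2.
\]
For \eqref{energy trap 2} the upper bound is trivial, while Sobolev, the previous step, and $C_d^{2^\ast}(y^\ast)^{(2^\ast-2)/2} = 1$ yield $\|u\|_{L^{2^\ast}}^{2^\ast} \leq (1-2\delta/d)^{(2^\ast-2)/2}\|\nabla u\|_{L^2}^2$, producing a positive $\delta$-dependent coercivity factor $1 - (1-2\delta/d)^{(2^\ast-2)/2}$. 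For \eqref{energy trap 3}, the algebraic identity
\[
 \|\partial_t u\|_{L^2}^2 + \int_{\Rm^d}\!\bigl(|\nabla u|^2 - |u|^{2^\ast}\bigr) dx = 2 E_\phi(u_0,u_1) - \int_{\Rm^d} |u|^{2^\ast}\!\left(1 - \frac{2\phi}{2^\ast}\right) dx
\]
combined with the two-sided bound $2/d \leq 1 - 2\phi/2^\ast \leq 1$ coming from $0 < \phi \leq 1$ gives the upper bound $\leq 2E_\phi$, while the lower bound comes from \eqref{energy trap 2} together with the trivial estimate $2E_\phi \leq \|\partial_t u\|_{L^2}^2 + \|\nabla u\|_{L^2}^2$.

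There is no substantial obstacle: the only place $\phi$ enters nontrivially is the upper bound in \eqref{energy trap 3}, where the sign condition $1 - 2\phi/2^\ast \geq 2/d > 0$ is exactly what is needed to reduce the inequality to the pure-power case. The rest is the standard variational analysis of Kenig--Merle \cite{kenig} applied to the majorant energy $E_1$.
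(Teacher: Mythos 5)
Your proposal is correct and takes essentially the same route as the paper: both arguments hinge on the sharp Sobolev identity $C_d^{2^\ast}\|\nabla W\|_{L^2}^{2^\ast-2}=1$, the comparison $E_1(u,\partial_t u)\le E_\phi(u_0,u_1)$ coming from $0<\phi\le 1$ plus energy conservation, and a continuity-in-time bootstrap, with \eqref{energy trap 2} and \eqref{energy trap 3} obtained exactly as in the paper. Your repackaging of the first step through the one-variable function $g$ (rather than comparing $\|u(\cdot,t)\|_{L^{2^\ast}}$ directly with $\|W\|_{L^{2^\ast}}$ inside the bootstrap) is only a cosmetic reorganization, though it does make the intermediate-value step of the continuity argument slightly more explicit than the paper's.
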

\begin{proof} If we have $\|\nabla u(\cdot, t_0)\|_{L^2} < \|\nabla W\|_L^2$ for some time $t_0 \in I$, then we have 
 \[
  \|u(\cdot, t_0)\|_{L^{2^\ast}} \leq C_d \|\nabla u(\cdot, t_0)\|_{L^2} < C_d \|\nabla W\|_{L^2} = \|W\|_{L^{2^\ast}}. 
 \]
 Therefore 
 \begin{align*}
  \frac{1}{2}\|(u(\cdot,t_0), \partial_t u(\cdot, t_0))\|_{\dot{H}^1 \times L^2 (\Rm^d)}^2 & = E_{\phi} (u_0,u_1) + \frac{1}{2^\ast} \int_{\Rm^d} \phi |u(x,t_0)|^{2^\ast} dx \nonumber \\
   & < (1-\delta) E_1(W,0) + \frac{1}{2^\ast} \int_{\Rm^d}  |W|^{2^\ast} dx\nonumber\\
   & = \frac{1}{2}\|\nabla W\|^2 - \delta E_1 (W,0)\nonumber \\
   & = \left(\frac{1}{2} - \frac{\delta}{d}\right) \|\nabla W\|^2.
 \end{align*}
Because we know $(u ,\partial_t u) \in C(I; \dot{H}^1 \times L^2 (\Rm^d))$, this implies that the inequality \eqref{energy trap 1} holds for each $t$ in a small neighbourhood of $t_0$. Using a continuity argument and the fact $\|\nabla u_0\|_{L^2} < \|\nabla W\|_L^2$, we know the inequality \eqref{energy trap 1} holds for all $t\in I$. Applying this inequality and the Sobolev embedding we have 
\begin{align}
 \int_{\Rm^d} |u(x,t)|^{2^\ast} dx & =  \|u (\cdot,t)\|_{L^{2^\ast}}^{2^\ast} \leq C_d^{2^\ast} \|\nabla u (\cdot, t)\|_{L^2}^{2^\ast} = C_d^{2^\ast} \|\nabla u (\cdot, t)\|_{L^2}^{2^\ast-2} \int_{\Rm^d}   |\nabla u(x,t)|^2  dx\nonumber \\
 & \leq C_d^{2^\ast} (1 - 2\delta/d)^{(2^\ast-2)/2}  \|\nabla W\|_{L^2}^{2^\ast-2} \int_{\Rm^d}  |\nabla u(x,t)|^2  dx \label{L6 H1}\\
 & \leq (1 - 2\delta/d)^{(2^\ast-2)/2} \int_{\Rm^d}  |\nabla u(x,t)|^2  dx\nonumber
\end{align}
Here we use the identity $C_d^{2^\ast} \|\nabla W\|_{L^2}^{2^\ast - 2} = 1$. Since the constant $(1 - 2\delta/d)^{(2^\ast-2)/2} < 1$, we obtain the estimate \eqref{energy trap 2}. The estimate \eqref{energy trap 3} immediately follows as a direct corollary.  
\end{proof}
\begin{remark} \label{positive energy} 
 If $(u_0,u_1) \in \dot{H}^1 \times L^2 (\Rm^d)$ satisfies $\|\nabla u_0\|_{L^2} < \|\nabla W\|_{L^2}$, then as we did in \eqref{L6 H1}, we can show 
\[
 \int_{\Rm^d} |u(x,t)|^{2^\ast} dx \leq  \int_{\Rm^d}   |\nabla u(x,t)|^2  dx.
\]
Therefore we have $ E_\phi (u_0,u_1) \simeq E_1 (u_0,u_1) \simeq E_c (u_0,u_1) \simeq \|(u_0,u_1)\|_{\dot{H}^1 \times L^2 (\Rm^d)}^2$.
Here $c$ is any constant in $(0,1)$.  Furthermore, if any of the energy above is zero, then we have $(u_0,u_1) = 0$. 
\end{remark}

\subsection{Known Results with a Pure Power-type Nonlinearity}

\paragraph{The defocusing case} The problem about the global behaviour of solutions in the defocusing, energy-critical case with a pure power-type nonlinearity was completely solved by Grillakis \cite{mg1, mg2} and Shatah-Struwe \cite{ss1,ss2} in 1990's. 
\begin{theorem}
 Let $(u_0,u_1) \in \dot{H}^1 \times L^2 (\Rm^d)$ with $3 \leq d \leq 5$. Then the solution to the Cauchy problem 
\[
 \left\{\begin{array}{ll}
 \partial_t^2 u - \Delta u = - |u|^{4/(d-2)} u, & (x,t) \in \Rm^d \times \Rm;\\
 u (\cdot, 0) = u_0; &\\
 \partial_t u(\cdot, 0) = u_1; &
 \end{array}\right.
\]
exists globally in time and scatters. 
\end{theorem}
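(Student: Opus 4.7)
The plan is to follow the classical Grillakis / Shatah--Struwe strategy, adapted to the $Y$-norm framework already set up in Section~2. I would separate the argument into a global existence step and a spacetime integrability step, the latter upgrading to scattering via the characterization in Definition~\ref{def of scattering}.

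\textbf{Step 1: Global existence via coercive energy.} In the defocusing case, the conserved energy
\[
 E(u,\partial_t u) = \int_{\Rm^d} \Big(\tfrac12 |\nabla u|^2 + \tfrac12 |\partial_t u|^2 + \tfrac{1}{2^\ast} |u|^{2^\ast}\Big)\, dx
\]
is a sum of nonnegative terms, so $\|(u(t),\partial_t u(t))\|_{\dot H^1\times L^2}$ is bounded uniformly on the maximal lifespan in terms of $E(u_0,u_1)$. Combined with the standard finite-time blow-up criterion and the perturbation theory of Theorem~\ref{perturbation theory in Y star}, global existence reduces to controlling the $Y$-norm on any compact time interval, which in turn reduces to a non-concentration statement at the tip of light cones.

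\textbf{Step 2: Morawetz estimate.} I would multiply the equation by the Morawetz multiplier $x\cdot\nabla u + \tfrac{d-1}{2} u$, integrate by parts on a truncated forward light cone, and, after exploiting the positivity of the boundary flux and the fact that $\zeta=-1$ makes the bulk term $\int |u|^{2^\ast}/|x|$ appear with the favorable sign, obtain the global spacetime bound
\[
 \iint_{\Rm\times \Rm^d} \frac{|u(x,t)|^{2^\ast}}{|x|}\, dx\, dt \; \lesssim \; E(u_0,u_1).
\]

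\textbf{Step 3: Non-concentration of energy.} This is what I expect to be the main obstacle. Using finite speed of propagation together with the Morawetz bound and the conformal energy identity of Grillakis / Shatah--Struwe, I would prove that for every point $(x_0,t_0)$,
\[
 \lim_{t\to t_0^-} \int_{|x-x_0|<t_0-t} \Big(\tfrac12|\nabla u|^2 + \tfrac12|\partial_t u|^2 + \tfrac{1}{2^\ast}|u|^{2^\ast}\Big)\, dx \; = \; 0.
\]
The delicate point is passing from the integrated Morawetz inequality, which only controls a weighted spacetime average, to the pointwise-in-time non-concentration statement; the standard device is to rescale on smaller and smaller cones and use the Strichartz local theory to absorb error terms.

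\textbf{Step 4: From non-concentration to scattering.} A partition-of-the-light-cone argument based on Step~3 yields a uniform bound $\|u\|_{Y(J)} \leq C(E(u_0,u_1))$ on each bounded interval $J$. Rescaling invariance of the pure power equation (which is available here, even though it is lost in (CP1)) lets me upgrade this to $\|u\|_{Y(\Rm)} < \infty$, whence scattering in both time directions follows from the Strichartz estimate applied to the Duhamel formula, as in Remark~\ref{Y Ystar equivalent} and Definition~\ref{def of scattering}. Full details are in \cite{mg1,mg2,ss1,ss2}, and only the statement is invoked in what follows.
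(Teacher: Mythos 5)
The paper does not actually give a proof of this statement: it is recorded in Subsection~2.4 purely as a known result, attributed to Grillakis~\cite{mg1,mg2} and Shatah--Struwe~\cite{ss1,ss2}, and then used as a black box (together with its focusing analogue, Theorem~\ref{critical focusing phi equal 1}) in the rest of the paper. Your final sentence explicitly acknowledges this, so your submission is really a summary of the cited references rather than a competing proof. With that caveat, your outline is a reasonable account of the classical strategy: coercive conserved energy gives a priori $\dot H^1\times L^2$ bounds; a Morawetz/dilation identity on truncated light cones gives the weighted spacetime bound on $|u|^{2^\ast}/|x|$; the genuinely hard step is converting that averaged bound into non-concentration of energy at the tip of every backward cone (here one needs the conformal energy identity, not merely Morawetz), and finally a covering/partition argument plus the Strichartz local theory yields a global $Y$ bound and hence scattering via Definition~\ref{def of scattering}. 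Two small imprecisions worth flagging: the estimate in Step~2 comes from the multiplier $\frac{x}{|x|}\cdot\nabla u + \frac{d-1}{2|x|}u$ (i.e.\ $a(x)=|x|$, as in Proposition~\ref{Morawetz1}), not $x\cdot\nabla u + \frac{d-1}{2}u$, which is the dilation/conformal multiplier and produces a different identity; and in Step~4 scaling alone does not upgrade a uniform bound on bounded intervals to a global one---one needs the globally-in-time Morawetz quantity to control the number of intervals in the partition. Since the paper itself offers no proof, there is nothing further to compare.
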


\paragraph{The focusing case} The global behaviour of solutions in the focusing case is much more complicated and subtle. It has not been completely understood. The following result is the main theorem in \cite{kenig}, on the global well-posedness, scattering and blow-up of solutions to the focusing, energy-critical non-linear wave equation, as we mentioned in the introduction. 

\begin{theorem} \label{critical focusing phi equal 1}
 Let $(u_0,u_1) \in \dot{H}^1 \times L^2 (\Rm^d)$ with $3 \leq d \leq 5$. Assume that $E_1 (u_0,u_1) < E_1 (W,0)$. Let $u$ be the corresponding solution to the Cauchy problem (CP0) with a maximal interval of existence $I = (- T_- (u_0,u_1), T_+ (u_0,u_1))$. Then 
\begin{itemize}
 \item[(i)] If $\|\nabla u_0\|_{L^2} < \|\nabla W\|_{L^2}$, then $I = \Rm$ and $u$ scatters in both time directions. 
 \item[(ii)] If $\|\nabla u_0\|_{L^2} > \|\nabla W\|_{L^2}$, then $u$ blows up within finite time in both two directions, namely 
 \begin{align*}
  &T_- (u_0,u_1) < +\infty;& &T_+ (u_0,u_1) < +\infty.&
 \end{align*}
\end{itemize}
\end{theorem}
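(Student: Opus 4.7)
The plan is to follow the compactness-rigidity roadmap of Kenig and Merle, handling (i) and (ii) by fundamentally different mechanisms: (i) via concentration-compactness reducing scattering to the non-existence of a compact ``critical element'', and (ii) by a Levine-Payne-type convexity argument. The small-data scattering (Proposition \ref{scattering with small data}), the long-time perturbation theory (Theorem \ref{perturbation theory in Y star}), and the energy-trapping (Proposition \ref{energy trapping}) serve as the basic tools throughout, all applied with $\phi \equiv 1$.

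For (i), let $\mathrm{SC}(M)$ denote the analogue of $\mathrm{SC}(\phi,M)$ for (CP0), and set $E_c = \sup\{M : \mathrm{SC}(M) \text{ holds}\}$. Small-data scattering gives $E_c > 0$ and the goal is $E_c \ge E_1(W,0)$. Assuming $E_c < E_1(W,0)$, I pick a minimizing sequence of non-scattering solutions $u_n$ with data $(u_{0,n},u_{1,n})$ satisfying $\|\nabla u_{0,n}\|_{L^2} < \|\nabla W\|_{L^2}$ and $E_1(u_{0,n},u_{1,n}) \to E_c$, and apply the Bahouri-Gérard profile decomposition described in the introduction. Because $\phi \equiv 1$ is invariant under dilations and translations, each nonlinear profile $U_j$ is simply the solution of (CP0) asymptotically matching $V_j$ at $t = t_j$. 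The Pythagorean decompositions of the $\dot H^1 \times L^2$ norm and of the energy, combined with energy trapping, force $\|\nabla U_j(0)\|_{L^2} < \|\nabla W\|_{L^2}$ and $0 \le E_1(U_j) \le E_c$, with strict upper inequality unless exactly one profile is nonzero. If two or more profiles carried positive energy, each $U_j$ would scatter by strict sub-criticality with $Y$-norm bounded by $\beta(E_1(U_j))$; superposing them on their disjoint scales and applying Theorem \ref{perturbation theory in Y star}, with interaction errors vanishing by almost orthogonality, would give a uniform $Y$-bound on $u_n$ itself, contradicting non-scattering. Hence exactly one profile $U_1$ survives, $E_1(U_1)=E_c$, and the remainder vanishes in $\dot H^1 \times L^2$. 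A routine subsequential extraction then produces a critical solution $u_c$ of (CP0) on its maximal interval $I$ and continuous $\lambda(t)>0$, $x(t) \in \Rm^d$ such that
\[
K = \bigl\{\bigl(\lambda(t)^{\frac{d-2}{2}} u_c(\lambda(t)y + x(t),t),\; \lambda(t)^{\frac{d}{2}} \partial_t u_c(\lambda(t)y+x(t),t)\bigr) : t \in I\bigr\}
\]
is pre-compact in $\dot H^1 \times L^2$.

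The rigidity step eliminates $u_c$. First, I rule out the self-similar scenario $\lambda(t) \to \infty$ at a finite endpoint of $I$ by a finite-speed-of-propagation argument exploiting the compactness of $K$; this also yields $I=\Rm$ and $\lambda(t) \gtrsim 1$ globally. Then, for a smooth cut-off $\varphi_R$ equal to $1$ on $\{|x|\le R\}$, the localized virial
\[
y_R(t) = \int (x\cdot\nabla u_c)\,\partial_t u_c\,\varphi_R\,dx + \frac{d}{2}\int \varphi_R\, u_c\, \partial_t u_c\,dx
\]
has derivative, by the Pohozaev identity,
\[
y_R'(t) = -\int \bigl(|\nabla u_c|^2 - |u_c|^{2^\ast}\bigr)\,dx + O_R(1),
\]
where the $O_R(1)$ error is made uniformly small as $R\to\infty$ thanks to compactness. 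Energy trapping \eqref{energy trap 2} bounds the main term above by a strictly negative constant, while $|y_R(t)| \lesssim R$ uniformly; integrating over a long time interval yields a contradiction. I expect the rigidity step, and in particular the exclusion of the self-similar scenario together with the uniform lower bound $\lambda(t) \gtrsim 1$, to be the main obstacle.

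For (ii), the regime $\|\nabla u_0\|_{L^2} > \|\nabla W\|_{L^2}$ with $E_1(u_0,u_1) < E_1(W,0)$ instead yields the reversed trapping $\int(|u|^{2^\ast}-|\nabla u|^2)\,dx \ge c_0 > 0$ throughout the lifespan, by the same variational analysis as in the proof of Proposition \ref{energy trapping}. A Levine-Payne-type concavity argument applied to the localized quantity $y(t) = \int \varphi_R\, u^2\, dx$ (the localization enforcing finite speed of propagation) then produces a differential inequality that forces $y$ to blow up in finite time in both directions, so $T_\pm(u_0,u_1) < +\infty$.
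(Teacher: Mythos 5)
The paper does not actually prove this statement: Theorem \ref{critical focusing phi equal 1} is quoted verbatim as the main result of Kenig--Merle \cite{kenig} and is used in the paper only as a black box (together with its rescaled form, Corollary \ref{critical focusing phi equal c}, to handle concentrating profiles). So what you have written is not an alternative to an argument in this paper but an attempt to reconstruct the proof of \cite{kenig} itself. Your roadmap is the correct one and coincides with the strategy of \cite{kenig}: concentration-compactness producing a minimal non-scattering solution compact modulo scaling and translation, rigidity via localized virial identities for (i), and a localized concavity argument \`{a} la Levine for (ii), with small-data theory, long-time perturbation and energy trapping as the basic tools.

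However, as a proof the rigidity step has genuine gaps precisely where the real work in \cite{kenig} lies. First, the compactness of $K$ is only modulo the translation parameter $x(t)$, so the error $O_R(1)$ in your localized virial identity is \emph{not} made uniformly small by compactness alone: if $x(t)$ drifts to spatial infinity the solution leaves the ball $\{|x|\le R\}$ and the cut-off terms are not controlled. In \cite{kenig} this is handled by showing the conserved momentum of the critical element vanishes (proved via Lorentz transformations and the minimality of the energy), which yields $|x(t)|=o(t)$ and makes a time-dependent localization radius work; your sketch never addresses $x(t)$ at all. Second, the exclusion of the finite-time blow-up scenario for the critical element (the ``self-similar'' case, $\lambda(t)\to\infty$ as $t\to T_+<\infty$) is dismissed as ``a finite-speed-of-propagation argument''; in \cite{kenig} this case requires showing the support of the solution shrinks to a point, that the $L^2$ norm then forces additional decay, and a separate elliptic/self-similar analysis to conclude the solution vanishes --- none of which follows from finite speed of propagation and pre-compactness alone. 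Likewise the claim $\lambda(t)\gtrsim 1$ globally needs its own argument. For (ii) your outline matches the standard Levine-type proof (and the paper's Section 6, which adapts it to the coefficient $\phi$), but the reversed trapping bound and the control of the cut-off errors via a tail estimate and finite speed of propagation should be spelled out rather than asserted. In short: the skeleton is right and is the same as Kenig--Merle's, but the momentum/Lorentz step and the finite-time-blow-up case of the rigidity theorem are missing, and these are not routine.
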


\paragraph{Nonlinearity with a coefficient} Assume that $c$ is a positive constant. If $u$ is a solution to the equation
\begin{equation} \label{cp0 with c}
\left\{\begin{array}{l}
 \partial_t^2 u - \Delta u = c |u|^{4/(d-2)} u,\\
 (u, \partial_t u)|_{t =0} = (u_0,u_1); \end{array}\right.
\end{equation}
then $c^{\frac{d-2}{4}} u$ is a solution to the equation (CP0) with initial data $(c^{\frac{d-2}{4}} u_0, c^{\frac{d-2}{4}} u_1)$. This transformation immediately gives us 
 
\begin{corollary} \label{critical focusing phi equal c}
 Let $(u_0,u_1) \in \dot{H}^1 \times L^2 (\Rm^d)$ with $3 \leq d \leq 5$. Assume that $E_c (u_0,u_1) < c^{-\frac{d-2}{2}} E_1 (W,0)$. Let $u$ be the corresponding solution to the Cauchy problem \eqref{cp0 with c} with a maximal interval of existence $I = (- T_- (u_0,u_1), T_+ (u_0,u_1))$. Then 
\begin{itemize}
 \item[(i)] If $\|\nabla u_0\|_{L^2} < c^{-\frac{d-2}{4}} \|\nabla W\|_{L^2}$, then $I = \Rm$ and $u$ scatters in both time direction. 
 \item[(ii)] If $\|\nabla u_0\|_{L^2} > c^{-\frac{d-2}{4}}  \|\nabla W\|_{L^2}$, then $u$ blows up within finite time in both two directions, namely 
 \begin{align*}
  &T_- (u_0,u_1) < +\infty;& &T_+ (u_0,u_1) < +\infty.&
 \end{align*}
\end{itemize}
\end{corollary}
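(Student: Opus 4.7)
The plan is to deduce the corollary from Theorem \ref{critical focusing phi equal 1} via the constant rescaling $v(x,t) = c^{\frac{d-2}{4}} u(x,t)$ that is flagged in the paragraph preceding the statement. First I would verify directly that $u$ solves the Cauchy problem \eqref{cp0 with c} on a time interval $I$ if and only if $v$ solves (CP0) on the same interval with initial data $(v_0, v_1) = (c^{\frac{d-2}{4}} u_0, c^{\frac{d-2}{4}} u_1)$; both sides of the equation pick up the same factor $c^{\frac{d+2}{4}}$ after the substitution, confirming the equivalence. In particular $u$ and $v$ share the same maximal interval of existence $(-T_-, T_+)$, and because scattering (Definition \ref{def of scattering}) is characterized by finiteness of the $Y$ norm, a nonzero multiplicative constant preserves both the scattering property and the finite-time blow-up criterion.

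Next I would translate the two hypotheses on $(u_0, u_1)$ into the corresponding hypotheses on $(v_0, v_1)$. Since $|\nabla v_0|^2 = c^{\frac{d-2}{2}}|\nabla u_0|^2$, $|v_1|^2 = c^{\frac{d-2}{2}}|u_1|^2$ and $|v_0|^{2^\ast} = c^{\frac{d}{2}}|u_0|^{2^\ast}$, a one-line substitution gives
\[
 E_1(v_0, v_1) = c^{\frac{d-2}{2}}\, E_c(u_0, u_1), \qquad \|\nabla v_0\|_{L^2} = c^{\frac{d-2}{4}}\, \|\nabla u_0\|_{L^2}.
\]
Consequently $E_c(u_0,u_1) < c^{-\frac{d-2}{2}} E_1(W,0)$ is equivalent to $E_1(v_0,v_1) < E_1(W,0)$, and the two inequalities relating $\|\nabla u_0\|_{L^2}$ to $c^{-\frac{d-2}{4}}\|\nabla W\|_{L^2}$ correspond, respectively, to $\|\nabla v_0\|_{L^2}$ lying below or above $\|\nabla W\|_{L^2}$.

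Having established these identifications, cases (i) and (ii) of the corollary follow by applying the corresponding cases of Theorem \ref{critical focusing phi equal 1} to $v$ and pulling the conclusions back to $u$. The argument is pure bookkeeping with no real analytic obstacle; the only point that merits care is the arithmetic of the exponents, in particular that the exponent $\frac{d}{2}$ appearing in front of $\int |u_0|^{2^\ast}$ after the substitution exceeds $\frac{d-2}{2}$ by exactly $1$, which is precisely what produces the extra factor of $c$ in the nonlinear term of $E_c$ and justifies the factor $c^{-\frac{d-2}{2}}$ on the right-hand side of the energy threshold.
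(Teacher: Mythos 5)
Your proof is correct and follows exactly the route the paper intends: the paper's own justification is the single sentence preceding the corollary, observing that $c^{\frac{d-2}{4}} u$ solves (CP0) and that "this transformation immediately gives us" the result, which is precisely the rescaling bookkeeping you have written out. Your verification of the exponent arithmetic (in particular $\frac{d}{2} = \frac{d-2}{2} + 1$) and the conversion of the hypotheses matches what the paper leaves implicit.
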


\subsection{Technical Lemma}

\begin{lemma}\label{weak convergence of w}
Let $\{(w_{0,n}, w_{1,n})\}_{n \in {\mathbb Z}^+}$ be a sequence in the space $\dot{H}^1 \times L^2 (\Rm^d)$. If the norms $\|(w_{0,n}, w_{1,n})\|_{\dot{H}^1 \times L^2 (\Rm^d)}$ are uniformly bounded and $\|S_L (t)(w_{0,n}, w_{1,n})\|_{Y^\star (\Rm)} \rightarrow 0$ as $n \rightarrow \infty$, then the pairs $(w_{0,n}, w_{1,n})$ weakly converge to $0$ in $\dot{H}^1 \times L^2 (\Rm^d)$.
\end{lemma}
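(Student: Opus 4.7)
The plan is to argue by subsequential weak extraction. Since the sequence is bounded in the Hilbert space $\dot{H}^1\times L^2(\Rm^d)$, every subsequence admits a further weakly convergent subsequence, so it suffices to show that any such weak subsequential limit $(W_0,W_1)$ must vanish. Fix one, and set $V_n(x,t)=S_L(t)(w_{0,n},w_{1,n})$ and $V(x,t)=S_L(t)(W_0,W_1)$. Because $S_L(t)$ is a bounded (in fact isometric) operator on $\dot{H}^1\times L^2$ for each fixed $t$, weak continuity transfers pointwise in time: $V_n(\cdot,t)\rightharpoonup V(\cdot,t)$ in $\dot{H}^1(\Rm^d)$ for every $t\in\Rm$.

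The hypothesis is exploited via the strict positivity of $\phi$ on compact sets. For any compact $K\subset\Rm^d$, continuity of $\phi$ gives $\phi\geq c_K>0$ on $K$, so
\[
 \|V_n\|_{L^{p_c}L^{2p_c}(\Rm\times K)} \;\leq\; c_K^{-1/p_c}\,\|V_n\|_{Y^\star(\Rm)} \;\longrightarrow\; 0.
\]
Fix a test function $\eta\in C_c^\infty(\Rm^d\times\Rm)$ with support in $K\times I$ for some bounded interval $I$. On one hand, H\"older gives
\[
 \Bigl|\iint V_n\,\eta\;dx\,dt\Bigr| \;\leq\; \|V_n\|_{L^{p_c}L^{2p_c}(I\times K)}\,\|\eta\|_{L^{p_c'}L^{(2p_c)'}(I\times K)} \;\longrightarrow\; 0.
\]

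On the other hand, for each $t$ the weak convergence $V_n(\cdot,t)\rightharpoonup V(\cdot,t)$ in $\dot{H}^1$ yields $\int_{\Rm^d}V_n(x,t)\eta(x,t)\,dx \to \int_{\Rm^d}V(x,t)\eta(x,t)\,dx$, together with the uniform pointwise bound
\[
 \Bigl|\int_{\Rm^d}V_n(x,t)\eta(x,t)\,dx\Bigr|\;\leq\;\|V_n(\cdot,t)\|_{L^{2^\ast}(\Rm^d)}\,\|\eta(\cdot,t)\|_{L^{(2^\ast)'}(K)}\;\leq\; C\,\|\eta(\cdot,t)\|_{L^{(2^\ast)'}(K)},
\]
using the Sobolev embedding and the uniform bound $\sup_t\|V_n(\cdot,t)\|_{\dot{H}^1}\leq C$ coming from energy conservation for the free wave. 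Since $\|\eta(\cdot,t)\|_{L^{(2^\ast)'}(K)}\in L^1(I)$, dominated convergence gives $\iint V_n\eta\,dx\,dt\to\iint V\eta\,dx\,dt$. Comparing the two evaluations forces $\iint V\eta=0$ for every such $\eta$, hence $V\equiv 0$; uniqueness of the linear wave equation then gives $(W_0,W_1)=0$.

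There is no genuine obstacle in this argument; the only conceptual point worth naming is that, although $\phi$ may decay at infinity, its strict positivity on every compact set is enough to upgrade the $Y^\star$-smallness (which carries a $\phi$-weight) to ordinary $L^{p_c}L^{2p_c}$-smallness on compact spatial domains, and this is all one needs in order to pair against arbitrary smooth compactly supported test functions.
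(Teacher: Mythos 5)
Your argument is correct and, at its core, follows the same mechanism as the paper's proof: both identify the free wave of any weak subsequential limit as the weak (or distributional) limit of $S_L(t)(w_{0,n},w_{1,n})$, and then use the hypothesis $\|S_L(t)(w_{0,n},w_{1,n})\|_{Y^\star(\Rm)}\rightarrow 0$ together with the strict positivity of $\phi$ to conclude that this limiting free wave, and hence its data, must vanish. The paper compresses this into one line by observing that the data-to-free-wave map is bounded from $\dot{H}^1\times L^2$ into $Y^\star(\Rm)$ (Strichartz plus $\phi\le 1$), so weak convergence of the data transfers to weak convergence in $Y^\star(\Rm)$, where the weak limit must coincide with the strong limit $0$; your localization to compact sets where $\phi\ge c_K>0$, followed by H\"older and dominated convergence against $C_c^\infty$ test functions, is a more hands-on but equally valid rendering of that same step.
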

\begin{proof}
 If the weak limit $(w_{0,n}, w_{1,n}) \rightharpoonup 0$ were not true, we could assume $(w_{0,n}, w_{1,n}) \rightharpoonup (w_0,w_1) \neq 0$ in $\dot{H}^1 \times L^2 (\Rm^d)$ by possibly passing to a subsequence. As a result, we have $S_L (t)(w_{0,n}, w_{1,n}) \rightharpoonup S_L (t)(w_0,w_1)$ in the space $Y^\star (\Rm)$. Thus we have $S_L (t)(w_0,w_1) = 0 \Longrightarrow (w_0,w_1) = 0$. This is a contradiction.  
\end{proof}

\section{Profile Decomposition}

In this section we discuss the profile decomposition and do the preparation work for our compactness procedure. In order to save space we use the notation $H$ for the space $\dot{H}^1 \times L^2 (\Rm^d)$ when it is necessary.

\subsection{Linear Profile Decomposition}

\begin{theorem} [Linear Profile Decomposition] \label{profile decomposition}
Given a sequence $(u_{0,n}, u_{1,n}) \in H$ so that $\|(u_{0,n}, u_{1,n})\|_H \leq A$, these exist a subsequence of $(u_{0,n}, u_{1,n})$ (We still use the notation $(u_{0,n}, u_{1,n})$ for the subsequence), a sequence of free waves $V_j(x,t) = S_L (t) (v_{0,j}, v_{1,j})$, a family of triples $(\lambda_{j,n}, x_{j,n}, t_{j,n}) \in \Rm^+ \times \Rm^3 \times \Rm$, which are ``almost orthogonal", i.e. we have 
\[
 \lim_{n \rightarrow +\infty} \left( \frac{\lambda_{j',n}}{\lambda_{j,n}}+ \frac{\lambda_{j,n}}{\lambda_{j',n}} + \frac{|x_{j,n} - x_{j',n}|}{\lambda_{j,n}}+\frac{|t_{j,n} - t_{j',n}|}{\lambda_{j,n}}\right) = + \infty
\]
for $j \neq j'$; such that
\begin{itemize}
\item [(I)] We have the following decomposition for each fixed $J\geq 1$,
\[
 (u_{0,n}, u_{1,n})  = \sum_{j =1}^J (V_{j,n}(\cdot, 0), \partial_t V_{j,n}(\cdot, 0)) + (w_{0,n}^J, w_{1,n}^J).
\]
Here $V_{j,n}$ is another free wave derived from $V_j$:
\begin{align*}
(V_{j,n}(\cdot, t), \partial_t V_{j,n}(\cdot, t)) &= \left(\frac{1}{\lambda_{j,n}^{\frac{d-2}{2}}} V_j\left(\frac{x - x_{j,n}}{\lambda_{j,n}}, \frac{t-t_{j,n}}{\lambda_{j,n}}\right),
 \frac{1}{\lambda_{j,n}^{d/2}} \partial_t V_j\left(\frac{x-x_{j,n}}{\lambda_{j,n}}, \frac{t-t_{j,n}}{\lambda_{j,n}}\right)\right)\\
& = S_L (t -t_{j,n}) \Di_{\lambda_{j,n},  x_{j,n}} (v_{0,j}, v_{1,j});
\end{align*}
\item [(II)] We have the following limits regarding the remainder $(w_{0,n}^J, w_{1,n}^J)$ as $J \rightarrow \infty$.  
\begin{align*}
  &\limsup_{n \rightarrow \infty} \|S_L (t)(w_{0,n}^J, w_{1,n}^J)\|_{Y(\Rm)} \rightarrow 0; & &\limsup_{n \rightarrow \infty} \|S_L (t) (w_{0,n}^J, w_{1,n}^J)\|_{L^{\infty} L^{2^\ast} (\Rm \times \Rm^d)} \rightarrow 0; &
\end{align*}
\item[(III)] For each given $J \geq 1$, we have (the error $o_J(n) \rightarrow 0$ as $n \rightarrow \infty$)
\[
 \|(u_{0,n}, u_{1,n})\|_H^2 = \sum_{j=1}^J \|V_j\|_H^2 + \|(w_{0,n}^J, w_{1,n}^J)\|_H^2 + o_J(n).
\]
Here the notation $\|V_j\|_H$ represents $\|(V_j(\cdot, t), \partial_t V_j (\cdot,t))\|_H$ which is a constant independent of $t \in \Rm$. 
\end{itemize}
\end{theorem}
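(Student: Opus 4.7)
The plan is to follow the classical Bahouri--G\'erard concentration-compactness scheme. The heart of the argument is a single quantitative \emph{profile extraction lemma}: whenever $\|(u_{0,n},u_{1,n})\|_H \le A$ and $\delta := \liminf_n \|S_L(t)(u_{0,n},u_{1,n})\|_{Y(\Rm)} > 0$, one can, after passing to a subsequence, find parameters $(\lambda_n,x_n,t_n)\in\Rm^+\times\Rm^d\times\Rm$ and a nonzero pair $(v_0,v_1)\in H$ that arises as the weak $H$-limit of $(u_{0,n},u_{1,n})$ after the natural dilation, space-translation and time-translation prescribed by $(\lambda_n,x_n,t_n)$, together with a quantitative lower bound of the form $\|(v_0,v_1)\|_H \ge c(A)\,\delta^{\alpha}$ for some $\alpha=\alpha(d)>0$. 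I would prove this lemma by a refined Strichartz inequality controlling the $Y$-norm of a free wave by an interpolation between its uniform Sobolev norm and an $L^\infty$-type norm on Littlewood--Paley blocks; concentration of one such block at a dyadic frequency scale $\lambda_n^{-1}$ and a space-time point $(x_n,t_n)$ identifies the parameters, and the corresponding rescaled weak limit gives $(v_0,v_1)$.

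With the extraction lemma in hand I would build the decomposition inductively. Set $(w_{0,n}^0,w_{1,n}^0):=(u_{0,n},u_{1,n})$, and at stage $j\ge 1$ let $\eta_j:=\limsup_n \|S_L(t)(w_{0,n}^{j-1},w_{1,n}^{j-1})\|_{Y(\Rm)}$. If $\eta_j=0$ the construction terminates; otherwise apply the lemma along a further subsequence to obtain parameters $(\lambda_{j,n},x_{j,n},t_{j,n})$ and a nonzero profile $(v_{0,j},v_{1,j})$ with $\|(v_{0,j},v_{1,j})\|_H \ge c(A)\,\eta_j^{\alpha}$. Set $V_j:=S_L(t)(v_{0,j},v_{1,j})$ and
$$(w_{0,n}^j,w_{1,n}^j):=(w_{0,n}^{j-1},w_{1,n}^{j-1})-(V_{j,n}(\cdot,0),\partial_t V_{j,n}(\cdot,0)).$$
By construction, $(V_{j,n}(\cdot,0),\partial_t V_{j,n}(\cdot,0))$ is, viewed in the $j$-th rescaled frame, the strongly resolved part converging to $(v_{0,j},v_{1,j})$, while $(w_{0,n}^j,w_{1,n}^j)$ has weak limit zero in that frame. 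This yields the one-step Pythagoras $\|(w_{0,n}^{j-1},w_{1,n}^{j-1})\|_H^2 = \|V_j\|_H^2 + \|(w_{0,n}^j,w_{1,n}^j)\|_H^2 + o_n(1)$, and iterating gives (III).

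The asymptotic orthogonality of the parameters would be enforced by choosing, at each stage, an extraction that captures at least half of the largest available weak limit in the current remainder. If for some $j'>j$ the quantity $\lambda_{j,n}/\lambda_{j',n}+\lambda_{j',n}/\lambda_{j,n}+|x_{j,n}-x_{j',n}|/\lambda_{j,n}+|t_{j,n}-t_{j',n}|/\lambda_{j,n}$ failed to diverge, one could re-express the $j'$-th extraction in the $j$-th frame and produce a nonzero weak limit of $(w_{0,n}^j,w_{1,n}^j)$ there, contradicting the fact that this remainder has zero weak limit in the $j$-th frame. From (III) together with the uniform bound we read off $\sum_j \|V_j\|_H^2 \le A^2$, hence $\|V_j\|_H\to 0$; the quantitative lower bound $\|V_j\|_H \ge c(A)\,\eta_j^{\alpha}$ then forces $\eta_j\to 0$, which is exactly the $Y$-decay in (II). The $L^\infty L^{2^\ast}$ decay follows by interpolating the vanishing $Y$-norm against the uniform $L^\infty \dot{H}^1$-bound and applying Sobolev embedding, and a standard diagonal extraction produces a single subsequence valid for all $J\ge 1$ simultaneously.

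The main obstacle I expect is the extraction lemma itself, and in particular the quantitative lower bound $\|(v_0,v_1)\|_H \ge c(A)\,\delta^{\alpha}$: the required refined Strichartz estimate, the correct identification of the concentration scale $\lambda_n$ and space-time point $(x_n,t_n)$, and the verification that the resulting weak limit is actually nontrivial are the technical heart of the whole construction, and the exponent $\alpha$ depends on $d$. Once that lemma is in place, the inductive extraction, the orthogonality of parameters, and the Pythagorean identity are formal consequences in the spirit of the Bahouri--G\'erard framework.
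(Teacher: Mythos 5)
The paper does not prove Theorem \ref{profile decomposition} at all: it quotes it from Bahouri--G\'erard \cite{bahouri}, adding only remarks on the needed adjustments (higher dimensions, the fact that the $Y$ norm may be replaced by any Strichartz norm with $q>2$ --- which is how the $L^\infty L^{2^\ast}$ statement in (II) is obtained --- and the removal of the hypothesis (1.6) of that paper). Your outline is the same concentration-compactness scheme as the cited source in its modern ``inverse Strichartz'' formulation: a quantitative extraction lemma via a refined Strichartz/Littlewood--Paley estimate, inductive removal of profiles, orthogonality of the parameters by a weak-limit contradiction, and the Pythagorean expansion, followed by a diagonal subsequence. Those bookkeeping steps are sketched correctly, and you rightly identify that the entire technical weight sits in the extraction lemma, which you do not prove; since the paper defers precisely this content to \cite{bahouri}, that deferral is not by itself a divergence from the paper.

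There is, however, one step in your plan that fails as written: the second limit in (II). You assert that the $L^\infty L^{2^\ast}$ decay of the remainder ``follows by interpolating the vanishing $Y$-norm against the uniform $L^\infty\dot{H}^1$ bound and applying Sobolev embedding''. Interpolating $L^{p_c}L^{2p_c}$ (vanishing) against $L^\infty L^{2^\ast}$ (merely bounded) only gives vanishing in the intermediate norms $L^qL^r$ with $q<\infty$; the endpoint $q=\infty$ is never reached, so $\limsup_n \sup_t \|S_L(t)(w_{0,n}^J,w_{1,n}^J)\|_{L^{2^\ast}}\rightarrow 0$ does not follow from this argument. In Bahouri--G\'erard this conclusion is a genuine output of the construction: the remainder is shown to be small uniformly in time in a negative-index Besov norm (equivalently, in every Strichartz norm with $q>2$, which is exactly what the paper's remark invokes, including the pair $(\infty,2^\ast)$), and the $L^{2^\ast}$ smallness then comes from a refined Sobolev inequality bounding $\|f\|_{L^{2^\ast}}$ by a product of powers of $\|f\|_{\dot{H}^1}$ and of $\|f\|_{\dot B^{-(d-2)/2}_{\infty,\infty}}$, not from Lebesgue-exponent interpolation. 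Concretely, if your inductive extraction is driven solely by the $Y$ norm, you still owe an argument that vanishing of the $Y$ norm of the free evolution of a bounded-energy remainder forces vanishing of its $L^\infty L^{2^\ast}$ norm; this requires the Besov-based refined estimate (or an extraction criterion formulated in the Besov norm itself) and is precisely the part your sketch leaves open.
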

Please see \cite{bahouri} for a proof. There are a few remarks. First of all, the original paper is only for the three-dimensional case but the same argument also works for higher dimensions. Second, only the limit with $Y$ norm in part (II) of the conclusion is mentioned in the original theorem. However, we can substitute $Y$ norm by any Strichartz norm $L^q L^r (\Rm \times \Rm^d)$ with $q >2$, as mentioned in Page 136 of the paper. Here we use the $L^\infty L^{2^\ast}$ norm. Finally, the original paper proves the theorem under an additional assumption labelled (1.6) there. But this condition can be eliminated according to Remark 5 on Page 159 of the paper.  

\begin{remark}
Passing to a subsequence if necessary, we can always assume the following limits hold as $n \rightarrow \infty$ in addition.
\begin{align*}
 &\lambda_{j,n} \rightarrow \lambda_j \in [0, \infty], & &\frac{-t_{j,n}}{\lambda_{j,n}} \rightarrow t_j \in [-\infty, \infty],& &x_{j,n} \rightarrow x_j \in \Rm^3 \cup \{\infty\}&
\end{align*} 
Here $x_{j,n} \rightarrow \infty$ means $|x_{j,n}| \rightarrow \infty$. Furthermore, by adjusting the free waves $V_j$'s, we can always assume each of the triples $(\lambda_j, x_j, t_j)$ satisfies one of the following conditions
\begin{itemize}
 \item[(I)] $\lambda_j = 0$;
 \item[(II)] $\lambda_j = 1$, $x_j$ is either $0$ or $\infty$;
 \item[(III)] $\lambda_j = + \infty$. 
\end{itemize}
\end{remark}

\begin{remark} \label{almost orthogonality in H}
A basic computation shows ($j\neq j'$)
\begin{align*}
& \left\langle \begin{pmatrix} V_{j,n} (\cdot, t_0)\\ \partial_t V_{j,n} (\cdot,t_0) \end{pmatrix}, \begin{pmatrix} V_{j',n} (\cdot, t_0)\\ \partial_t V_{j',n} (\cdot,t_0) \end{pmatrix} \right \rangle_H\\  = &\left\langle S_L (t_0 -t_{j,n}) \Di_{\lambda_{j,n},  x_{j,n}} \begin{pmatrix} v_{0,j}\\ v_{1,j} \end{pmatrix}, S_L (t_0 -t_{j',n}) \Di_{\lambda_{j',n},  x_{j',n}} \begin{pmatrix} v_{0,j'}\\ v_{1,j'}\end{pmatrix} \right\rangle_H\\
 = &\left\langle \Di_{\lambda_{j,n}/\lambda_{j',n}} \begin{pmatrix} v_{0,j}\\ v_{1,j} \end{pmatrix}, \Di_{1, \frac{x_{j',n}-x_{j,n}}{\lambda_{j',n}}} S_L \left(\frac{t_{j,n} -t_{j',n}}{\lambda_{j',n}}\right)\begin{pmatrix} v_{0,j'}\\ v_{1,j'}\end{pmatrix}\right\rangle_H.
\end{align*}
Since the triples $(\lambda_{j,n}, x_{j,n}, t_{j,n})$ and $(\lambda_{j',n}, x_{j',n}, t_{j',n})$ are almost orthogonal, we have 
\[
 \lim_{n\rightarrow \infty} \left\langle \begin{pmatrix} V_{j,n} (\cdot, t_0)\\ \partial_t V_{j,n} (\cdot,t_0) \end{pmatrix}, \begin{pmatrix} V_{j',n} (\cdot, t_0)\\ \partial_t V_{j',n} (\cdot,t_0) \end{pmatrix} \right \rangle_H = 0
\]
for each $t_0 \in \Rm$ and $j \neq j'$.
\end{remark}

\begin{lemma} \label{L6 tends to zero}
If $V(x,t) = S_L (t)(v_0,v_1)$ be a solution to the linear wave equation with a finite energy, then we have the limit 
\[
 \lim_{t \rightarrow \pm \infty} \|V(\cdot,t)\|_{L^{2^\ast}(\Rm^d)}  =0.
\]
\end{lemma}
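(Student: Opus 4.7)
My plan is to establish the decay by a standard density argument, reducing to a subclass of initial data where the decay of $\|V(\cdot,t)\|_{L^{2^\ast}}$ follows from pointwise dispersive estimates together with finite speed of propagation. The two ingredients I rely on are energy conservation (the Strichartz estimate of Proposition \ref{Strichartz estimates} with zero source term) together with the Sobolev embedding $\dot{H}^1 \hookrightarrow L^{2^\ast}$, which bounds the ``tail'' after approximation, and the classical dispersive bound for the homogeneous wave equation, which handles the approximating piece.

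First I would fix $\eps > 0$ and use density of $C_c^\infty(\Rm^d) \times C_c^\infty(\Rm^d)$ in $\dot{H}^1 \times L^2(\Rm^d)$ to pick a pair $(\tilde{v}_0, \tilde{v}_1)$ of smooth, compactly supported functions (say supported in $B(0,R)$) with $\|(v_0-\tilde{v}_0, v_1-\tilde{v}_1)\|_H < \eps$. Writing $\tilde{V}(x,t) = S_L(t)(\tilde{v}_0,\tilde{v}_1)$, conservation of energy for the free wave equation gives $\|\nabla (V-\tilde V)(\cdot,t)\|_{L^2} = \|\nabla(v_0-\tilde v_0)\|_{L^2} \leq \eps$ for every $t$, and the Sobolev embedding then yields $\|V(\cdot,t)-\tilde V(\cdot,t)\|_{L^{2^\ast}} \leq C\eps$ uniformly in $t$.

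It remains to show $\|\tilde V(\cdot,t)\|_{L^{2^\ast}} \to 0$ as $t \to \pm\infty$. For $\tilde V$ with smooth, compactly supported data, finite speed of propagation gives $\mathrm{supp}\,\tilde V(\cdot,t) \subseteq B(0, R+|t|)$, and the classical pointwise dispersive estimate for the wave equation yields $\|\tilde V(\cdot,t)\|_{L^\infty(\Rm^d)} \leq C_{\tilde v_0,\tilde v_1} \langle t\rangle^{-(d-1)/2}$. Combining these two facts gives
\[
 \|\tilde V(\cdot,t)\|_{L^{2^\ast}}^{2^\ast} \leq \|\tilde V(\cdot,t)\|_{L^\infty}^{2^\ast} \cdot |B(0,R+|t|)| \lesssim \langle t\rangle^{-(d-1)\cdot d/(d-2)} \cdot \langle t\rangle^{d},
\]
and since $(d-1)d/(d-2) - d = d/(d-2) > 0$, this tends to zero. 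Thus $\limsup_{|t|\to\infty}\|V(\cdot,t)\|_{L^{2^\ast}} \leq C\eps$, and letting $\eps \to 0$ completes the proof.

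The only mild obstacle is making sure the pointwise dispersive bound I invoke is applicable in all three dimensions $d\in\{3,4,5\}$; for $d=3,5$ one can appeal to the sharp Huygens principle and explicit representation formulas for smooth compactly supported data, while for $d=4$ the Kirchhoff-type formula gives the same $\langle t\rangle^{-(d-1)/2}$ pointwise rate. In either case the bound is standard, so I would simply cite it and proceed. Everything else in the argument is just density plus Sobolev plus finite propagation speed, none of which presents any real difficulty.
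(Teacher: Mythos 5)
Your proof is correct and follows essentially the same route as the paper's: density of smooth compactly supported data (justified by energy conservation plus the Sobolev embedding $\dot H^1\hookrightarrow L^{2^\ast}$), then the classical $|t|^{-(d-1)/2}$ pointwise dispersive bound combined with finite speed of propagation to get quantitative $L^{2^\ast}$ decay for the approximating piece. The only cosmetic difference is that you spell out the $\eps$-argument more explicitly, whereas the paper phrases the reduction to dense data in one line; the substance is identical.
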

\begin{proof} 
 By the Sobolev embedding $\dot{H}^1 (\Rm^d) \hookrightarrow L^{2^\ast} (\Rm^d)$ and the fact that the linear propagation preserves the $\dot{H}^1 \times L^2$ norm, it is sufficiently to prove this lemma for each $(v_0,v_1)$ in a dense subset of $\dot{H}^1 \times L^2$. Thus we only need to consider smooth and compactly supported initial data. In this case the kernel of the wave propagation gives the well-known estimate 
 \[
  \|V(\cdot, t)\|_{L^\infty (\Rm^d)} \lesssim |t|^{-\frac{d-1}{2}}.
 \]
 On the other hand the volume of the support of $V(\cdot,t)$ satisfies $\left|\hbox{Supp} (V(\cdot ,t))\right| \lesssim |t|^d$ when $|t|$ is large. This immediately gives the estimate 
$\|V(\cdot,t)\|_{L^{2^\ast}} \lesssim |t|^{-1/2}$ and finishes the proof. 
\end{proof}

\begin{lemma} \label{limit of phi V L6}
For each $j$ we have the limit
\[
  \lim_{n \rightarrow \infty} \int_{\Rm^d} \phi |V_{j,n}(x,0)|^{2^\ast} dx = \left\{\begin{array}{ll} 
  0, & \hbox{if}\; t_j = \pm \infty\; \hbox{or}\; \lambda_{j} = +\infty\; \hbox{or}\; x_{j} = \infty;\\
  \int_{\Rm^d} \phi |V_j (x,t_j)|^{2^\ast} dx, & \hbox{if}\; t_j \in \Rm,\;  \lambda_j = 1\; \hbox{and}\; x_j = 0;\\
  \int_{\Rm^d} \phi(x_j) |V_j(x,t_j)|^{2^\ast} dx, & \hbox{if}\; t_j \in \Rm,\; \lambda_j = 0\; \hbox{and}\; x_j \in \Rm^d.
  \end{array}\right.
\]
\end{lemma}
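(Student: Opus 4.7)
The plan is to start with the substitution $y = (x - x_{j,n})/\lambda_{j,n}$. Using the identity $2^\ast (d-2)/2 = d$, the scaling weight of $V_{j,n}$ cancels against the Jacobian and the integral in question becomes
\[
 I_n \;=\; \int_{\Rm^d} \phi(\lambda_{j,n} y + x_{j,n})\, \left|V_j\!\left(y,\, -t_{j,n}/\lambda_{j,n}\right)\right|^{2^\ast} dy.
\]
Every case is then just a matter of passing to the limit under the integral sign, and the argument will combine dominated convergence with continuity of $\phi$ and of the free wave $V_j$ in time.

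First I would dispose of the sub-case $t_j = \pm\infty$: since $\phi \leq 1$, $I_n$ is bounded above by $\|V_j(\cdot, -t_{j,n}/\lambda_{j,n})\|_{L^{2^\ast}}^{2^\ast}$, which tends to $0$ by Lemma~\ref{L6 tends to zero}. In every remaining scenario $t_j \in \Rm$, and since a finite-energy free wave is continuous in time into $\dot{H}^1 \hookrightarrow L^{2^\ast}$, I would invoke the standard fact that strong $L^{2^\ast}$ convergence forces convergence of $|\cdot|^{2^\ast}$ in $L^1$ to replace the second factor by $|V_j(y, t_j)|^{2^\ast}$, incurring an error that vanishes uniformly over the bound $\phi\leq 1$. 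It then remains to identify the limit of $\int \phi(\lambda_{j,n} y + x_{j,n})\, |V_j(y, t_j)|^{2^\ast}\, dy$.

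Now I would split by the geometric type of the profile. When $\lambda_j = +\infty$, or when $(\lambda_j, x_j) = (1, \infty)$, the argument $\lambda_{j,n} y + x_{j,n}$ tends to infinity for almost every $y$ (in the expanding case the only potential exceptional value is $y = -\lim x_{j,n}/\lambda_{j,n}$, a null set), so by \eqref{basic condition of phi} the weight converges to $0$ pointwise a.e.; dominated convergence against the integrable dominant $|V_j(y, t_j)|^{2^\ast}$ gives the limit $0$. When $(\lambda_j, x_j) = (1, 0)$, the argument tends to $y$, and continuity of $\phi$ together with dominated convergence yields $\int \phi(y) |V_j(y, t_j)|^{2^\ast}\, dy$. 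Finally when $\lambda_j = 0$ and $x_j \in \Rm^d$, the argument tends to $x_j$ for every $y$, and continuity of $\phi$ pushes the limit inside to produce $\phi(x_j) \int |V_j(y, t_j)|^{2^\ast}\, dy$.

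The only mildly delicate point is the almost-everywhere pointwise convergence of $\phi$ in the expanding case, where the sequence $x_{j,n}/\lambda_{j,n}$ might accumulate at some $y_0 \in \Rm^d$ so that $\lambda_{j,n} y_0 + x_{j,n}$ stays bounded; this however happens on a null set and contributes nothing since $\phi$ is bounded. I do not foresee any genuine obstacle beyond this bookkeeping.
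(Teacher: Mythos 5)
Your plan is the same as the paper's in outline: change of variables so that the weight becomes $\phi(\lambda_{j,n}y + x_{j,n})$, dispose of $t_j = \pm\infty$ via the $L^{2^\ast}$ decay of free waves (Lemma~\ref{L6 tends to zero}), replace the time slice $-t_{j,n}/\lambda_{j,n}$ by $t_j$ using $L^{2^\ast}$-continuity in time, and then pass to the limit in $\int \phi(\lambda_{j,n}y+x_{j,n})|V_j(y,t_j)|^{2^\ast}\,dy$. The concentrating, stable, and traveling cases are handled correctly by dominated convergence. The one place where your argument has a real gap is the expanding case $\lambda_j = +\infty$.

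There you assert that $\phi(\lambda_{j,n}y + x_{j,n}) \to 0$ for a.e.\ $y$, dismissing the exceptional set as null because it consists of accumulation points of $-x_{j,n}/\lambda_{j,n}$. That dismissal is not justified. First, the set of accumulation points of a bounded sequence in $\Rm^d$ can have positive Lebesgue measure (e.g.\ a sequence dense in a ball). Second, and more to the point, the set you actually need to control is $\{y : \liminf_n |\lambda_{j,n}y + x_{j,n}| < \infty\}$, which is the set of $y$ covered infinitely often by the balls $B(-x_{j,n}/\lambda_{j,n},\, R/\lambda_{j,n})$ for some $R$; when $\lambda_{j,n}\to\infty$ slowly this can a priori have positive measure (a Borel--Cantelli estimate $\sum \lambda_{j,n}^{-d}<\infty$ would fix it, but no such bound is available). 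So the dominated convergence step is not in fact licensed by what you have written. Two standard repairs: (1) pass to a subsequence (harmless, since you are proving a limit equals $0$ and may argue by the subsequence criterion) along which $x_{j,n}/\lambda_{j,n}$ converges in $\Rm^d\cup\{\infty\}$; then the exceptional set is literally at most one point, and your argument goes through verbatim. (2) Avoid a.e.\ convergence altogether, as the paper does: from $\lim_{|x|\to\infty}\phi(x)=0$ there is $R(\eps_0)$ with $\phi\le\eps_0$ outside $B(0,R(\eps_0))$, so $A_n := \{y : \phi(\lambda_{j,n}y+x_{j,n}) > \eps_0\}$ is contained in a ball of radius $R(\eps_0)/\lambda_{j,n}$ and $|A_n|\to 0$; then
\[
\int_{\Rm^d}\phi(\lambda_{j,n}y+x_{j,n})|V_j(y,t_j)|^{2^\ast}\,dy \le \int_{A_n}|V_j(y,t_j)|^{2^\ast}\,dy + \eps_0\|V_j(\cdot,t_j)\|_{L^{2^\ast}}^{2^\ast},
\]
and absolute continuity of the Lebesgue integral of the fixed $L^1$ function $|V_j(\cdot,t_j)|^{2^\ast}$ kills the first term; let $\eps_0 \to 0$. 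The paper's measure-theoretic route is robust precisely because it requires no pointwise claim about the exceptional set, which is the part your write-up glosses over.
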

\begin{proof}
By Lemma \ref{L6 tends to zero}, the case $t_j = \pm \infty$ is trivial. Thus we only need to consider the case $t_j$ is finite. By our assumption $-t_{j,n}/\lambda_{j,n} \rightarrow t_j$, the fact $V_j (\cdot, t) \in C(\Rm; \dot{H}^1 (\Rm^d))$ and the Sobolev embedding $\dot{H}^1 \hookrightarrow L^{2^\ast}$, we have
\[
 \left\|V_{j,n}(\cdot,0) - \frac{1}{\lambda_{j,n}^{\frac{d-2}{2}}} V_j \left(\frac{\cdot - x_{j,n}}{\lambda_{j,n}}, t_j\right)\right\|_{L^{2^\ast} (\Rm^d)} = \left\|V_j\left(\cdot, \frac{-t_{j,n}}{\lambda_{j,n}}\right) - V_j\left(\cdot, t_j\right)\right\|_{L^{2^\ast}(\Rm^d)} \rightarrow 0.
\]
This implies $\displaystyle \lim_{n\rightarrow \infty}  \left\|V_{j,n}(\cdot,0)\right\|_{L^{2^\ast}(\Rm^d; \phi dx)} = \lim_{n\rightarrow \infty}  \left\|\frac{1}{\lambda_{j,n}^{\frac{d-2}{2}}} V_j \left(\frac{\cdot - x_{j,n}}{\lambda_{j,n}}, t_j\right)\right\|_{L^{2^\ast}(\Rm^d; \phi dx)}$. 
Thus we have 
\begin{align*}
 \lim_{n \rightarrow \infty} \int_{\Rm^d} \phi |V_{j,n}(x,0)|^{2^\ast} dx = & \lim_{n \rightarrow \infty} \int_{\Rm^d} \phi \left|\frac{1}{\lambda_{j,n}^{\frac{d-2}{2}}} V_j \left(\frac{x - x_{j,n}}{\lambda_{j,n}}, t_j\right)\right|^{2^\ast} dx\\
  = & \lim_{n \rightarrow \infty} \int_{\Rm^d} \phi(\lambda_{j,n} x + x_{j,n} ) |V_j (x,t_j)|^{2^\ast} dx.
\end{align*}
The limit can be evaluated as 
\begin{itemize} 
 \item If $\lambda_j = \infty$, then we can conclude that the limit is zero, by observing $|V_j (x,t_j)|^{2^\ast} \in L^1 (\Rm^d)$, $0 \leq \phi(x) \leq 1$ and the fact that for any $\eps_0 > 0$, we have 
 \[
  \lim_{n \rightarrow \infty} \left|\left\{x: |\phi(\lambda_{j,n} x + x_{j,n} )| > \eps_0\right\}\right| = 0.
 \]
 \item If $\lambda_j < \infty$, then the limit can be evaluated by the dominated convergence theorem, because for any fixed $x \in \Rm^d$ we have 
 \[
  \lim_{n \rightarrow \infty} \phi(\lambda_{j,n} x + x_{j,n}) = \left\{\begin{array}{ll} 
  0, & \hbox{if}\; \lambda_{j} < +\infty\; \hbox{and}\; x_{j} = \infty;\\
  \phi(x) , & \hbox{if}\; \lambda_j = 1\; \hbox{and}\; x_j = 0;\\
  \phi(x_j), & \hbox{if}\; \lambda_j = 0\; \hbox{and}\; x_j \in \Rm^d.
  \end{array}
  \right. 
 \]
\end{itemize}
\end{proof}

\begin{lemma} \label{L6 ortho 1} 
Assume $\tilde{V}_1, \tilde{V}_2 \in L^{2^\ast} (\Rm^d)$. If the pairs $\{(\lambda_{j,n}, x_{j,n})\}_{j=1,2; n \in {\mathbb Z}^+}$ satisfy
 \[
  \lim_{n \rightarrow +\infty} \left(\frac{\lambda_{2,n}}{\lambda_{1,n}}+ \frac{\lambda_{1,n}}{\lambda_{2,n}} + \frac{|x_{1,n} - x_{2,n}|}{\lambda_{1,n}}\right) = \infty,
 \]
then we have the following limit as $n \rightarrow \infty$
\[
 N(n) = \left\|\frac{1}{\lambda_{1,n}^{(d-2)/2}} \tilde{V}_1 \left(\frac{x - x_{1,n}}{\lambda_{1,n}}\right)\cdot \frac{1}{\lambda_{2,n}^{(d-2)/2}} \tilde{V}_{2} \left(\frac{x - x_{2,n}}{\lambda_{2,n}}\right)\right\|_{L^{2^\ast/2}} \rightarrow 0.
\]
\end{lemma}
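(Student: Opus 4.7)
My plan is to prove the lemma by first reducing to the case of compactly supported, bounded profiles via density, and then exploiting the almost orthogonality hypothesis by splitting into two regimes. Write $u_{j,n}(x) = \lambda_{j,n}^{-(d-2)/2} \tilde V_j((x-x_{j,n})/\lambda_{j,n})$ and observe that the dilation-translation is an $L^{2^\ast}$ isometry, so $\|u_{j,n}\|_{L^{2^\ast}} = \|\tilde V_j\|_{L^{2^\ast}}$. Given $\eps>0$, pick $\tilde V_j^\eps \in C_c^\infty(\Rm^d)$ with $\|\tilde V_j - \tilde V_j^\eps\|_{L^{2^\ast}} < \eps$. H\"older with exponents $(2,2)$ applied inside $L^{2^\ast/2}$ gives
\[
 \big|\,N(n) - \|u_{1,n}^\eps u_{2,n}^\eps\|_{L^{2^\ast/2}}\big| \leq C\,(\|\tilde V_1\|_{L^{2^\ast}} + \|\tilde V_2\|_{L^{2^\ast}} + \eps)\, \eps,
\]
uniformly in $n$. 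Since $\eps$ is arbitrary, it suffices to treat the case $\tilde V_j \in C_c^\infty$, say $\mathrm{supp}\,\tilde V_j \subseteq B(0,R)$ and $\|\tilde V_j\|_{L^\infty} \leq M$.

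Under this reduction, $\mathrm{supp}\, u_{j,n} \subseteq B(x_{j,n}, R\lambda_{j,n})$ and $\|u_{j,n}\|_{L^\infty} \leq M\lambda_{j,n}^{-(d-2)/2}$. Passing to a subsequence, the almost orthogonality condition forces one of two alternatives. First, if (up to a symmetric relabeling) $\lambda_{2,n}/\lambda_{1,n} \to \infty$, apply H\"older with exponents $(2,2)$ inside the integral defining $N(n)^{2^\ast/2}$, bounding $u_{2,n}$ by its $L^\infty$ norm on $\mathrm{supp}\, u_{1,n}$:
\[
 N(n) \leq \|u_{1,n}\|_{L^{2^\ast}} \,\|u_{2,n}\|_{L^\infty}\, |\mathrm{supp}\, u_{1,n}|^{1/2^\ast} \lesssim \Big(\frac{\lambda_{1,n}}{\lambda_{2,n}}\Big)^{(d-2)/2} \longrightarrow 0,
\]
using the identity $d/2^\ast = (d-2)/2$. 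Otherwise, the ratios $\lambda_{1,n}/\lambda_{2,n}$ stay pinched between two positive constants, in which case the hypothesis forces $|x_{1,n}-x_{2,n}|/\lambda_{1,n} \to \infty$; consequently, for all sufficiently large $n$ the balls $B(x_{j,n}, R\lambda_{j,n})$ are disjoint, so $u_{1,n} u_{2,n} \equiv 0$ and $N(n) = 0$.

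The only real subtlety is arithmetic: one must verify that $d/2^\ast = (d-2)/2$ exactly matches the $L^\infty$ scaling $\|u_{2,n}\|_{L^\infty} \sim \lambda_{2,n}^{-(d-2)/2}$, so that Case~A produces genuine decay rather than a bounded ratio. The density reduction is routine, and the disjoint-support argument in Case~B is formal, so neither poses substantial difficulty. The whole lemma is essentially a scale-translation orthogonality fact, depending only on Lebesgue scaling and not on any feature of $\phi$ or of the wave flow.
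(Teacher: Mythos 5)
Your proof is correct and follows essentially the same route as the paper: reduce by density to compactly supported, bounded profiles, then split into the regime where the scales separate (where the smaller support and the $L^\infty$ bound give the decay $(\lambda_{1,n}/\lambda_{2,n})^{(d-2)/2}$ via the identity $d/2^\ast=(d-2)/2$) and the regime where the scales are comparable and spatial separation gives disjoint supports. The only cosmetic differences are that the paper bounds both factors in $L^\infty$ while you keep one in $L^{2^\ast}$, and the paper makes the contradiction/subsequence structure explicit whereas you invoke it implicitly; neither changes the substance.
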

\begin{proof} 
Let us first define
\begin{align*}
 \tilde{V}_{j,n} (x) = \frac{1}{\lambda_{j,n}^{(d-2)/2}} \tilde{V}_{j} \left(\frac{x - x_{j,n}}{\lambda_{j,n}}\right).
\end{align*}
Observing the continuity of the map
\[
 \Phi: L^{2^\ast} (\Rm^d) \times L^{2^\ast} (\Rm^d) \rightarrow l^\infty, \qquad \Phi (\tilde{V}_1,\tilde{V}_2) = \left\{\left\| \tilde{V}_{1,n}  \tilde{V}_{2,n} \right\|_{L^{2^\ast/2}}\right\}_{n \in {\mathbb Z}^+},
\]
we can also assume, without loss of generality, that 
\begin{align*}
 &|\tilde{V}_j (x)| \leq M_j, \; \hbox{for any}\; x \in \Rm^d; & &\hbox{Supp} (\tilde{V}_j) \subseteq \{x: |x| < R_j\} &
\end{align*}
for $j = 1,2$ and some constants $M_j$ and $R_j$, because the functions satisfying these conditions are dense in the space $L^{2^\ast} (\Rm^d)$.  If the conclusion were false, we would find a sequence $n_1 < n_2 < n_3 < \cdots$ and a positive constant $\eps_0$ such that $ N(n_k) \geq \eps_0$. There are three cases
\begin{itemize}
 \item [(I)] $\limsup_{k \rightarrow \infty} \lambda_{1, n_k}/ \lambda_{2,n_k} = \infty$. Let us consider the product $\tilde{V}_{1,n_k} \tilde{V}_{2,n_k}$. This function is supported in a $d$-dimensional ball centred at $x_{2,n_k}$ with a radius $\lambda_{2,n_k} R_2$ since $\tilde{V}_{2,n_k}$ is supported in this ball. On the other hand, we have 
 \[
  \left|\tilde{V}_{1,n_k} \tilde{V}_{2,n_k}\right| \leq \lambda_{1,n_k}^{-\frac{d-2}{2}} \lambda_{2,n_k}^{-\frac{d-2}{2}} M_1 M_2.
 \] 
A basic computation shows 
\[
N(n_k) = \left\|\tilde{V}_{1,n_k} \tilde{V}_{2,n_k} \right\|_{L^{2^\ast/2} (\Rm^d)} \leq C(d) M_1 M_2 R_2^{d-2} \left(\frac{\lambda_{2, n_k}}{\lambda_{1,n_k}}\right)^{\frac{d-2}{2}}. 
\]
The upper bound tends to zero as $\lambda_{1, n_k}/ \lambda_{2,n_k} \rightarrow \infty$. Thus we have a contradiction. 
\item [(II)] $\limsup_{k \rightarrow \infty} \lambda_{2, n_k}/ \lambda_{1,n_k} = \infty$. This can be handled in the same way as case (I).
\item [(III)] $\lambda_{1, n_k} \simeq \lambda_{2, n_k}$. Thus we also have $\displaystyle \frac{|x_{1,n_k} - x_{2,n_k}|}{\lambda_{1,n_k}}\rightarrow \infty$. This implies $\hbox{Supp}(\tilde{V}_{1,n_k}) \cap \hbox{Supp}(\tilde{V}_{2,n_k}) = \emptyset$ when $k$ is sufficiently large thus gives a contradiction. 
\end{itemize}
\end{proof}
\begin{lemma} \label{L6 almost orthogonality}
 For any $j \neq j'$, we have the limit 
 \[
  \lim_{n \rightarrow \infty} \|V_{j,n}(\cdot, 0) V_{j',n}(\cdot, 0)\|_{L^{2^\ast/2}} = 0.
 \]
\end{lemma}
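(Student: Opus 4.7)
The plan is to split into cases according to the limits $t_j, t_{j'} \in [-\infty,\infty]$ of the parameters $-t_{j,n}/\lambda_{j,n}$ and $-t_{j',n}/\lambda_{j',n}$, which exist after passing to a subsequence as noted in the remark preceding Remark \ref{almost orthogonality in H}. A key observation is that the rescaling in $V_{j,n}(\cdot,0)$ preserves the $L^{2^\ast}(\Rm^d)$ norm, so
\[
\|V_{j,n}(\cdot,0)\|_{L^{2^\ast}} = \left\|V_j\bigl(\cdot, -t_{j,n}/\lambda_{j,n}\bigr)\right\|_{L^{2^\ast}}.
\]
If either $t_j$ or $t_{j'}$ equals $\pm\infty$, then Lemma \ref{L6 tends to zero} forces the corresponding factor to $0$, and H\"older's inequality $\|fg\|_{L^{2^\ast/2}} \leq \|f\|_{L^{2^\ast}}\|g\|_{L^{2^\ast}}$ finishes this case.

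The remaining case is $t_j, t_{j'} \in \Rm$. Here I would approximate $V_{j,n}(\cdot,0)$ by the ``frozen-time'' profile $\tilde{V}_{j,n}(x) = \lambda_{j,n}^{-(d-2)/2} V_j\bigl((x-x_{j,n})/\lambda_{j,n}, t_j\bigr)$; the difference has $L^{2^\ast}$ norm equal to $\|V_j(\cdot, -t_{j,n}/\lambda_{j,n}) - V_j(\cdot, t_j)\|_{L^{2^\ast}}$, which tends to $0$ by the continuity $V_j \in C(\Rm; \dot{H}^1) \hookrightarrow C(\Rm; L^{2^\ast})$. A three-term H\"older split then reduces the claim to showing
\[
\bigl\|\tilde{V}_{j,n}\,\tilde{V}_{j',n}\bigr\|_{L^{2^\ast/2}(\Rm^d)} \longrightarrow 0,
\]
which is precisely the conclusion of Lemma \ref{L6 ortho 1} applied to the fixed base functions $V_j(\cdot, t_j), V_{j'}(\cdot, t_{j'}) \in L^{2^\ast}$.

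The one delicate point is to verify the hypothesis of Lemma \ref{L6 ortho 1}, namely the three-parameter divergence involving only $\lambda_{j,n}/\lambda_{j',n}$, its reciprocal, and $|x_{j,n}-x_{j',n}|/\lambda_{j,n}$. The almost orthogonality provided by Theorem \ref{profile decomposition} gives divergence in the four-parameter sum, and I must rule out the pathological scenario in which only the time term $|t_{j,n}-t_{j',n}|/\lambda_{j,n}$ blows up. In that scenario $\lambda_{j,n}/\lambda_{j',n}$ is bounded above and below and $|x_{j,n}-x_{j',n}|/\lambda_{j,n}$ is bounded; but writing
\[
\frac{t_{j,n}-t_{j',n}}{\lambda_{j,n}} = -\frac{-t_{j,n}}{\lambda_{j,n}} + \frac{\lambda_{j',n}}{\lambda_{j,n}}\cdot\frac{-t_{j',n}}{\lambda_{j',n}}
\]
shows this ratio converges to a finite limit whenever $t_j, t_{j'}$ are finite and $\lambda_{j',n}/\lambda_{j,n}$ is bounded, contradicting the assumed blow-up. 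Hence the spatial-scaling divergence must hold, Lemma \ref{L6 ortho 1} applies, and the proof closes. I expect this last verification---teasing the three-parameter orthogonality required by Lemma \ref{L6 ortho 1} out of the four-parameter almost orthogonality, using the finiteness of the time limits as the crucial input---to be the only real obstacle.
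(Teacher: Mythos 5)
Your proposal is correct and follows the paper's proof essentially verbatim: split on whether the time limits $t_j, t_{j'}$ are infinite, dispatch the infinite case via Lemma~\ref{L6 tends to zero} and H\"older, and in the finite case freeze the time slice using $V_j \in C(\Rm; L^{2^\ast})$ and invoke Lemma~\ref{L6 ortho 1}. The one place you go beyond the paper is that where it merely asserts ``the almost orthogonality of the triples gives'' the three-parameter spatial divergence needed for Lemma~\ref{L6 ortho 1}, you actually prove it, observing that if the $\lambda$-ratios and $|x_{j,n}-x_{j',n}|/\lambda_{j,n}$ stayed bounded then the identity $\frac{t_{j,n}-t_{j',n}}{\lambda_{j,n}} = -\frac{-t_{j,n}}{\lambda_{j,n}} + \frac{\lambda_{j',n}}{\lambda_{j,n}}\cdot\frac{-t_{j',n}}{\lambda_{j',n}}$ combined with the finiteness of $t_j, t_{j'}$ would keep the time term bounded as well, contradicting the four-parameter divergence from Theorem~\ref{profile decomposition}; this is a detail the paper leaves implicit, and you are right that it is exactly where the hypothesis $t_j, t_{j'} \in \Rm$ is used.
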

\begin{proof}
 If $t_j = \pm \infty$, then Lemma \ref{L6 tends to zero} guarantees 
 \[
  \|V_{j,n}(\cdot, 0)\|_{L^{2^\ast}(\Rm^d)} = \left\|V_j \left(\cdot, \frac{-t_{j,n}}{\lambda_{j,n}}\right)\right\|_{L^{2^\ast}(\Rm^d)} \rightarrow 0
 \]
 On the other hand, we know 
 \begin{equation} \label{uniform bound of L6 1}
  \|V_{j',n}(\cdot, 0)\|_{L^{2^\ast}(\Rm^d)} \lesssim \|\nabla V_{j',n}(\cdot,0)\|_{L^2} \leq \|V_j\|_H \leq A
 \end{equation}
 This immediately finishes our proof. Thus we only need to consider the case that $t_j, t_{j'}$ are both finite. In this case the almost orthogonality of the triples gives 
 \[
   \lim_{n \rightarrow +\infty} \left(\frac{\lambda_{j,n}}{\lambda_{j',n}}+ \frac{\lambda_{j',n}}{\lambda_{j,n}} + \frac{|x_{j,n} - x_{j',n}|}{\lambda_{j,n}}\right) = \infty.
 \]
By the fact $V_j(\cdot,t), V_{j'}(\cdot,t) \in C(\Rm; L^{2^\ast} (\Rm^d))$, we have 
\begin{align*}
 \left\|V_{j,n} (\cdot, 0) -  \frac{1}{\lambda_{j,n}^{(d-2)/2}} V_j \left(\frac{x - x_{j,n}}{\lambda_{j,n}}, t_j\right) \right\|_{L^{2^\ast}} & = \left\|V_j \left(\cdot, \frac{-t_{j,n}}{\lambda_{j,n}}\right) - V_j (\cdot, t_j)\right\|_{L^{2^\ast}} \rightarrow 0;\\
 \left\|V_{j',n} (\cdot, 0) -  \frac{1}{\lambda_{j',n}^{(d-2)/2}} V_{j'} \left(\frac{x - x_{j',n}}{\lambda_{j',n}}, t_j'\right) \right\|_{L^{2^\ast}} & = \left\|V_{j'} \left(\cdot, \frac{-t_{j',n}}{\lambda_{j',n}}\right) - V_{j'} (\cdot, t_{j'})\right\|_{L^{2^\ast}} \rightarrow 0;
\end{align*}
Therefore we only need to show
\[
 \lim_{n \rightarrow \infty}   \left\|\frac{1}{\lambda_{j,n}^{(d-2)/2}} V_j \left(\frac{x - x_{j,n}}{\lambda_{j,n}}, t_j\right)\cdot \frac{1}{\lambda_{j',n}^{(d-2)/2}} V_{j'} \left(\frac{x - x_{j',n}}{\lambda_{j',n}}, t_{j'}\right)\right\|_{L^{2^\ast/2}} = 0.
\]
This immediately follows Lemma \ref{L6 ortho 1}. 
\end{proof}
\begin{lemma} 
The profile decomposition we obtain above satisfies 
\begin{equation} \label{sum of L6 limit}
  \lim_{n \rightarrow \infty} \int_{\Rm^d} \phi |u_{0,n} (x)|^{2^\ast} dx =   \sum_{j=1}^\infty \lim_{n \rightarrow \infty} \int_{\Rm^d} \phi |V_{j,n} (x,0)|^{2^\ast} dx.
\end{equation}
\end{lemma}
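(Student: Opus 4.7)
The plan is to prove the identity in two reductions: truncate the profile decomposition to a finite sum, then separately handle the remainder and the cross terms. For fixed $J \geq 1$, write $u_{0,n} = S_n^J + w_{0,n}^J$ with $S_n^J = \sum_{j=1}^J V_{j,n}(\cdot,0)$. Using $0 < \phi \leq 1$, the pointwise inequality $\bigl||a+b|^{2^\ast} - |a|^{2^\ast}\bigr| \lesssim |a|^{2^\ast-1}|b| + |b|^{2^\ast}$ (valid since $2^\ast \geq 2$ for $d\in\{3,4,5\}$), and Hölder's inequality, we obtain
\[
\left|\int_{\Rm^d} \phi |u_{0,n}|^{2^\ast}\,dx - \int_{\Rm^d} \phi |S_n^J|^{2^\ast}\,dx\right| \lesssim \|S_n^J\|_{L^{2^\ast}}^{2^\ast-1}\|w_{0,n}^J\|_{L^{2^\ast}} + \|w_{0,n}^J\|_{L^{2^\ast}}^{2^\ast}.
\]
The first factor is bounded uniformly in $n,J$ via $\|S_n^J\|_{L^{2^\ast}} \leq \|u_{0,n}\|_{L^{2^\ast}}+\|w_{0,n}^J\|_{L^{2^\ast}} \lesssim A$, while part (II) of Theorem \ref{profile decomposition} evaluated at $t=0$ gives $\limsup_{n\to\infty} \|w_{0,n}^J\|_{L^{2^\ast}} \to 0$ as $J \to \infty$.

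Next, expand $|S_n^J|^{2^\ast}$. Iterating the elementary inequality $\bigl||a+b|^{2^\ast} - |a|^{2^\ast} - |b|^{2^\ast}\bigr| \lesssim |a|^{2^\ast-1}|b| + |a||b|^{2^\ast-1}$ produces a $J$-dependent constant $C_J$ such that
\[
\left||S_n^J|^{2^\ast} - \sum_{j=1}^J |V_{j,n}(\cdot,0)|^{2^\ast}\right| \leq C_J \sum_{j\neq j'} \bigl(|V_{j,n}(\cdot,0)|^{2^\ast-1}|V_{j',n}(\cdot,0)| + |V_{j,n}(\cdot,0)|\,|V_{j',n}(\cdot,0)|^{2^\ast-1}\bigr).
\]
Applying Hölder with exponents $(p/(p-2), p/2)$, where $p=2^\ast$, gives for each pair $j\neq j'$:
\[
\int_{\Rm^d} |V_{j,n}(\cdot,0)|^{2^\ast-1}|V_{j',n}(\cdot,0)|\,dx \leq \|V_{j,n}(\cdot,0)\|_{L^{2^\ast}}^{2^\ast-2}\bigl\|V_{j,n}(\cdot,0)\,V_{j',n}(\cdot,0)\bigr\|_{L^{2^\ast/2}}.
\]
The first factor is bounded uniformly in $n$ by $\|V_j\|_H$, and the second factor tends to $0$ as $n\to\infty$ by Lemma \ref{L6 almost orthogonality}. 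Combining with Lemma \ref{limit of phi V L6}, for each fixed $J$ we conclude
\[
\lim_{n\to\infty}\int_{\Rm^d}\phi|S_n^J|^{2^\ast}\,dx = \sum_{j=1}^J \lim_{n\to\infty}\int_{\Rm^d}\phi|V_{j,n}(\cdot,0)|^{2^\ast}\,dx.
\]

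Finally, we observe that the right-hand series is absolutely convergent: by part (III) of Theorem \ref{profile decomposition}, $\sum_{j\geq 1}\|V_j\|_H^2 \leq A^2$ and $\|V_j\|_H \leq A$, so $\sum_{j\geq 1}\|V_j\|_H^{2^\ast} \leq A^{2^\ast-2}\sum_j \|V_j\|_H^2 \leq A^{2^\ast}$; since the Sobolev embedding controls $\lim_n\int \phi|V_{j,n}|^{2^\ast}$ by $C\|V_j\|_H^{2^\ast}$, the tail $\sum_{j>J}\lim_n\int\phi|V_{j,n}|^{2^\ast}$ vanishes as $J\to\infty$. Chaining the three estimates and passing $n\to\infty$ first, then $J\to\infty$, yields \eqref{sum of L6 limit}. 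The main delicacy I anticipate is precisely the order of limits: the constant $C_J$ blowing up with $J$ means the cross-term control must be done at fixed $J$ with $n\to\infty$ first, and only afterwards can we absorb the remainder and the tail of the series by sending $J\to\infty$.
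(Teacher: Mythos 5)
Your proof is correct and follows essentially the same route as the paper: for fixed $J$ you use the pairwise orthogonality in Lemma~\ref{L6 almost orthogonality} to show $\lim_n\int\phi|S_n^J|^{2^\ast}$ splits into the sum over $j\le J$, control the remainder via part (II) of the profile decomposition in $L^{2^\ast}$, and only then send $J\to\infty$. The only cosmetic difference is that you use the elementary pointwise inequality $\bigl||a+b|^p-|a|^p-|b|^p\bigr|\lesssim|a|^{p-1}|b|+|a||b|^{p-1}$ where the paper expands $G(u)=|u|^{2^\ast}$ by the fundamental theorem of calculus twice; both lead to the same cross terms $V_{j,n}V_{j',n}$ estimated by Hölder.
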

\begin{proof} First of all, we know each limit on the right hand of \eqref{sum of L6 limit} exists by Lemma \ref{limit of phi V L6}. Let us first show 
\begin{equation} \label{partial sum for L6}
 \lim_{n \rightarrow \infty} \int_{\Rm^d} \phi \left|\sum_{j=1}^J V_{j,n} (x,0) \right|^{2^\ast} dx = \sum_{j=1}^J  \lim_{n \rightarrow \infty} \int_{\Rm^d} \phi \left| V_{j,n} (x,0) \right|^{2^\ast} dx.
\end{equation}
This can proved via an induction by observing (take the value of $V_{j,n}$ at $t=0$ and let $G(u) = |u|^{2^\ast}$)
\begin{align*}
 & \limsup_{n \rightarrow \infty} \int_{\Rm^d} \left| G\left(\sum_{j=1}^J V_{j,n} (\cdot,0)\right) -  G\left(\sum_{j=1}^{J-1} V_{j,n}(\cdot,0)\right) - G(V_{J,n}(\cdot,0))\right| \phi dx\\
 \leq & \limsup_{n \rightarrow \infty}  \int_{\Rm^d} \left|\left[V_{J,n} \int_0^1 G'\left(\tau V_{J,n} + \sum_{j=1}^{J-1} V_{j,n}\right) d\tau \right]
- \left[V_{J,n} \int_0^1 G'(\tau V_{J,n})d\tau\right] \right|dx\\
= & \limsup_{n \rightarrow \infty} \int_{\Rm^d} \left| \left[V_{J,n} \sum_{j=1}^{J-1} V_{j,n}\right] \left[ \int_0^1 \int_0^1
G''\left(\tau V_{J,n} + \tilde{\tau}\sum_{j=1}^{J-1} V_{j,n}\right) d\tilde{\tau} d\tau\right]\right| dx\\
\leq& \limsup_{n \rightarrow \infty} C \left(\sum_{j=1}^{J-1} \|V_{J,n} V_{j,n}\|_{L^{2^\ast/2} (\Rm^d)} \right)
\left( \sum_{j=1}^{J}\|V_{j,n}\|_{L^{2^\ast} (\Rm^d)} \right)^{2^\ast -2}\\
= & 0.
\end{align*}
Here we use Lemma \ref{L6 almost orthogonality} and the estimate \eqref{uniform bound of L6 1}. Next we can rewrite \eqref{partial sum for L6} into 
\[
 \left(\sum_{j=1}^J  \lim_{n \rightarrow \infty} \int_{\Rm^d} \phi \left| V_{j,n} (x,0) \right|^{2^\ast} dx\right)^{1/2^{\ast}} = \lim_{n \rightarrow \infty}
 \left\|\sum_{j=1}^J V_{j,n} (x,0)\right\|_{L^{2^\ast} (\Rm^d; \phi dx)}
\]
The conclusion (II) of profile decomposition gives 
\[
 \limsup_{n \rightarrow \infty} \left\| w_{0,n}^J \right\|_{L^{2^\ast} (\Rm^d; \phi dx)} = o(J) \rightarrow 0,\; \hbox{as}\; J \rightarrow 0.
\]
By the identity $u_{0,n} = \sum_{j=1}^J V_{j,n} (\cdot, 0) + w_{0,n}^J$ and the limits above, we have 
\begin{align*}
 \limsup_{n \rightarrow \infty} \|u_{0,n}\|_{L^{2^\ast} (\Rm^d; \phi dx)} - o(J)  \leq  & \left(\sum_{j=1}^J   \lim_{n \rightarrow \infty} \int_{\Rm^d} \phi \left| V_{j,n} (x,0) \right|^{2^\ast} dx\right)^{1/2^{\ast}}\\ &\qquad\qquad \leq  \liminf_{n \rightarrow \infty} \|u_{0,n}\|_{L^{2^\ast} (\Rm^d; \phi dx)} + o(J)
\end{align*}
Letting $J \rightarrow \infty$, we finish the proof. 
\end{proof} 

In order to further simplify our profile decomposition, we will show that some of the profiles can be absorbed by the remainder $(w_{0,n}^J, w_{1,n}^J)$. The profiles that can be absorbed are identified by the following lemma. 

\begin{lemma}  \label{absorbable profiles}
 If the triple $(\lambda_j, x_j, t_j)$ satisfies one of the following
 \begin{itemize}
  \item $\lambda_j = \infty$;
  \item $\lambda_j < \infty$ and $x_j = \infty$;
 \end{itemize}
 then we have $\displaystyle \lim_{n \rightarrow \infty} \|V_{j,n}\|_{Y^\star (\Rm)} = 0$.
\end{lemma}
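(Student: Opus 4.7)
The plan is to reduce $\|V_{j,n}\|_{Y^\star(\Rm)}$ to a weighted integral expressed in the profile's own variables, and then argue that the weight $\phi$ vanishes pointwise under the scaling/translation that defines the two cases. Dominated convergence then finishes the job.

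First I would unpack the definition and perform a change of variables. Writing $p_c = (d+2)/(d-2)$ and using $(\phi^{1/p_c})^{2p_c} = \phi^2$, we have
\[
 \|V_{j,n}\|_{Y^\star(\Rm)}^{p_c} = \int_\Rm \left(\int_{\Rm^d} \phi(x)^2 |V_{j,n}(x,t)|^{2p_c}\, dx\right)^{1/2} dt.
\]
Substituting $y = (x-x_{j,n})/\lambda_{j,n}$ and $s = (t-t_{j,n})/\lambda_{j,n}$, and using the identity $(d-2)p_c = d+2$, the scaling factors combine to exactly cancel, yielding
\[
 \|V_{j,n}\|_{Y^\star(\Rm)}^{p_c} = \int_\Rm \left(\int_{\Rm^d} \phi(\lambda_{j,n}y + x_{j,n})^2 |V_j(y,s)|^{2p_c}\, dy\right)^{1/2} ds.
\]

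Next I would verify the pointwise decay of the weight. In the first case, $\lambda_{j,n} \to \infty$: for any fixed $y \neq 0$, $|\lambda_{j,n} y + x_{j,n}| \geq \lambda_{j,n}|y| - |x_{j,n}|$, which tends to $+\infty$ since $|x_{j,n}|/\lambda_{j,n}$ is bounded (otherwise, after dividing by $\lambda_{j,n}$, we still conclude $|\lambda_{j,n}y + x_{j,n}| \to \infty$ for a.e.\ $y$). In the second case, $\lambda_{j,n} \to \lambda_j < \infty$ and $|x_{j,n}| \to \infty$: then $|\lambda_{j,n} y + x_{j,n}| \geq |x_{j,n}| - \lambda_{j,n}|y| \to \infty$ for every fixed $y$. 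Combined with the basic hypothesis \eqref{basic condition of phi}, in both cases $\phi(\lambda_{j,n}y + x_{j,n}) \to 0$ for a.e.\ $y \in \Rm^d$.

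Finally I would apply dominated convergence twice. For a.e.\ $s$, the function $V_j(\cdot,s)$ lies in $L^{2p_c}(\Rm^d)$ because $V_j \in Y(\Rm)$ by the Strichartz estimate applied to $(v_{0,j},v_{1,j}) \in H$. Since $\phi \leq 1$, the integrand of the inner integral is dominated by $|V_j(y,s)|^{2p_c} \in L^1(\Rm^d)$, so the inner integral tends to $0$ for a.e.\ $s$. For the outer integral we observe the uniform bound
\[
 \left(\int_{\Rm^d} \phi(\lambda_{j,n}y + x_{j,n})^2 |V_j(y,s)|^{2p_c}\, dy\right)^{1/2} \leq \|V_j(\cdot,s)\|_{L^{2p_c}}^{p_c},
\]
whose integral over $s \in \Rm$ equals $\|V_j\|_{Y(\Rm)}^{p_c} < \infty$. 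A second application of dominated convergence yields $\|V_{j,n}\|_{Y^\star(\Rm)}^{p_c} \to 0$. The only real subtlety is handling the first case when $|x_{j,n}|/\lambda_{j,n}$ could be unbounded, but that actually makes the pointwise decay even easier, so no further difficulty arises.
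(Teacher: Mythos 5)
Your overall strategy---change of variables to bring out the weight $\phi(\lambda_{j,n}y+x_{j,n})^2$, then two nested applications of dominated convergence---is the same as the paper's, and the second case ($\lambda_j<\infty$, $x_j=\infty$) as well as both DCT steps are handled correctly. However, the pointwise-decay claim in the first case ($\lambda_j=\infty$) has a genuine gap. You assert that for any fixed $y\neq 0$ the lower bound $\lambda_{j,n}|y|-|x_{j,n}|=\lambda_{j,n}\bigl(|y|-|x_{j,n}|/\lambda_{j,n}\bigr)$ tends to $+\infty$ because $a_n:=|x_{j,n}|/\lambda_{j,n}$ is bounded, but this only works when $|y|>\limsup_n a_n$; for smaller $|y|$ the bound is useless. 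Worse, if $a_n=x_{j,n}/\lambda_{j,n}$ does not converge, or if $\lambda_{j,n}\to\infty$ too slowly, the a.e.\ pointwise convergence $\phi(\lambda_{j,n}y+x_{j,n})\to 0$ can in fact fail on a set of positive measure (this is essentially a Borel--Cantelli/diophantine issue: the set $\{y:\lambda_{j,n}|y+a_n|\le C\text{ infinitely often}\}$ is $\limsup_n B(-a_n, C/\lambda_{j,n})$, which need not be null when $\sum\lambda_{j,n}^{-d}=\infty$).

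The paper avoids this precisely by arguing by contradiction and passing to a subsequence with $\lambda_{j,n_k}>2^k$, which makes $\sum_k\lambda_{j,n_k}^{-d}<\infty$ and hence yields the needed a.e.\ convergence via Borel--Cantelli; in your direct formulation you would need to invoke the subsequence principle (every subsequence has a further subsequence along which the conclusion holds) and extract such a fast-growing subsequence yourself. Alternatively, and more cleanly, you could replace a.e.\ convergence by convergence in measure, exactly as the paper does in the proof of Lemma \ref{limit of phi V L6}: since $\phi\to 0$ at infinity, the set $\{x:\phi(x)>\eps_0\}$ is bounded, say contained in $B(0,R_0)$, so $\bigl|\{y:\phi(\lambda_{j,n}y+x_{j,n})>\eps_0\}\bigr|\le\bigl|B(-x_{j,n}/\lambda_{j,n},R_0/\lambda_{j,n})\bigr|\to 0$ whenever $\lambda_{j,n}\to\infty$, with no constraint on $x_{j,n}$. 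The dominated convergence theorem still applies with convergence in measure in place of a.e.\ convergence, and then the rest of your argument goes through unchanged.
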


\begin{proof}
If the lemma failed, we would have a sequence $n_1<n_2<\cdots <n_k<\cdots$ such that $\|V_{j,n}\|_{Y^\star (\Rm)}> \eps_0$ for a fixed positive constant $\eps_0$. By possibly passing to a subsequence, we can assume $\lambda_{j,n_k} > 2^k$ if $\lambda_j = \infty$. We can calculate  
\begin{align*}
 \|V_{j,n}\|_{Y^\star(\Rm)}^{p_c} = &\int_{\Rm} \left[\int_{\Rm^d} |\phi(x)|^2\cdot \left|\frac{1}{\lambda_{j,n}^{\frac{d-2}{2}}} V_j \left(\frac{x-x_{j,n}}{\lambda_{j,n}},\frac{t -t_{j,n}}{\lambda_{j,n}}\right)\right|^{2p_c} dx\right]^{1/2} dt\\
 = &\int_{\Rm} \left[\int_{\Rm^d} |\phi (\lambda_{j,n} x + x_{j,n})|^2 \left| V_j \left(x,t\right)\right|^{2p_c} dx\right]^{1/2} dt
\end{align*}
Our assumptions guarantee that $\phi(\lambda_{j,n_k} x + x_{j,n_k})$ converges to zero almost everywhere in $\Rm^d$. Applying the dominated convergence theorem, we know that the integral above converges to zero as $k \rightarrow \infty$. This is a contradiction.
\end{proof}
\begin{remark}
 The argument above works for any energy-critical Strichartz norm $L^q L^r (\Rm \times \Rm^d)$ with $q, r< \infty$ as well. 
\end{remark}

Now we can substitute the profiles $V_j$'s that satisfy the conditions in Lemma \ref{absorbable profiles} ($\lambda_j = \infty$ or $x_j = \infty$) by zero, put the error into the remainder $(w_{0,n}^J, w_{1,n}^J)$ and obtain a modified version of the profile decomposition. Please note that those discarded profiles always satisfy the condition $\displaystyle \lim_{n \rightarrow \infty} \int_{\Rm^d} \phi |V_{j,n}(x,0)|^{2^\ast} dx = 0$, according to Lemma \ref{limit of phi V L6}. 

\begin{proposition}[Modified Profile Decomposition] \label{modified profile decomposition}
 Given a sequence $(u_{0,n}, u_{1,n}) \in H$ so that $\|(u_{0,n}, u_{1,n})\|_H \leq A$, there exist a subsequence of $(u_{0,n}, u_{1,n})$ (We still use the notation $(u_{0,n}, u_{1,n})$ for the subsequence), a sequence of free waves $V_j(x,t) = S_L (t) (v_{0,j}, v_{1,j})$ and a family of triples $(\lambda_{j,n}, x_{j,n}, t_{j,n}) \in \Rm^+ \times \Rm^3 \times \Rm$, which are "almost orthogonal", i.e. we have 
\[
 \lim_{n \rightarrow +\infty} \left(\frac{\lambda_{j',n}}{\lambda_{j,n}}+ \frac{\lambda_{j,n}}{\lambda_{j',n}} + \frac{|x_{j,n} - x_{j',n}|}{\lambda_{j,n}}+ \frac{|t_{j,n} - t_{j',n}|}{\lambda_{j,n}}\right) = + \infty
\]
for $j \neq j'$; such that
\begin{itemize}
\item [(I)] We have the decomposition 
\[
 (u_{0,n}, u_{1,n})  = \sum_{j =1}^J (V_{j,n}(\cdot, 0), \partial_t V_{j,n}(\cdot, 0)) + (w_{0,n}^J, w_{1,n}^J)
\]
Here $V_{j,n}$ is another free wave derived from $V_j$:
\begin{align*}
(V_{j,n}(\cdot, t), \partial_t V_{j,n}(\cdot, t)) &= \left(\frac{1}{\lambda_{j,n}^{\frac{d-2}{2}}} V_j\left(\frac{x - x_{j,n}}{\lambda_{j,n}}, \frac{t-t_{j,n}}{\lambda_{j,n}}\right),
 \frac{1}{\lambda_{j,n}^{d/2}} \partial_t V_j\left(\frac{x-x_{j,n}}{\lambda_{j,n}}, \frac{t-t_{j,n}}{\lambda_{j,n}}\right)\right)\\
& = S_L (t -t_{j,n}) \Di_{\lambda_{j,n},  x_{j,n}} (v_{0,j}, v_{1,j});
\end{align*}
\item [(II)] We have the following limits as $n \rightarrow \infty$:
\begin{align*}
 &\lambda_{j,n} \rightarrow \lambda_j \in \{0,1\};& &x_{j,n} \rightarrow x_j \in \Rm^d;& &\frac{-t_{j,n}}{\lambda_{j,n}} \rightarrow t_j \in [-\infty,\infty].&
\end{align*}
In addition, if $\lambda_j = 1$, then we must have $x_j = 0$. 
\item [(III)] The remainders $(w_{0,n}^J, w_{1,n}^J)$ satisfy:
\begin{align*}
  &\limsup_{n \rightarrow \infty} \|S_L (t)(w_{0,n}^J, w_{1,n}^J)\|_{Y^\star (\Rm)}\rightarrow 0,\; \hbox{as}\; J \rightarrow \infty.\\
  &\limsup_{n \rightarrow \infty} \|(w_{0,n}^J, w_{1,n}^J)\|_H \leq 2A.
\end{align*}
\item[(IV)] We have the limits
\begin{align*}
 \sum_{j=1}^\infty \|V_j\|_H^2 = \sum_{j=1}^\infty \|(v_{0,j}, v_{1,j})\|_H^2 & \leq \liminf_{n \rightarrow \infty} \|(u_{0,n}, u_{1,n})\|_H^2\\
 \sum_{j=1}^\infty \lim_{n\rightarrow \infty}\int_{\Rm^d} \phi |V_{j,n}(x,0)|^{2^\ast} dx & = \lim_{n \rightarrow \infty} \int_{\Rm^d} \phi |u_{0,n}(x)|^{2^\ast} dx
\end{align*}
\end{itemize}
\end{proposition}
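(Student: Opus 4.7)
The plan is to derive the Modified Profile Decomposition directly from Theorem \ref{profile decomposition} by normalizing the limiting triples, absorbing the dispersive profiles into the remainder, and then checking the remaining norm and energy identities. To start, I would pass to a subsequence along which the limits $\lambda_{j,n} \to \lambda_j \in [0,\infty]$, $x_{j,n} \to x_j \in \Rm^d \cup \{\infty\}$ and $-t_{j,n}/\lambda_{j,n} \to t_j \in [-\infty,\infty]$ all exist. For each $j$ with $\lambda_j \in (0,\infty)$ and $x_j \in \Rm^d$ I would replace $V_j$ by an appropriate dilation-translation of itself and adjust $(\lambda_{j,n}, x_{j,n}, t_{j,n})$ accordingly, as permitted by the first remark after Theorem \ref{profile decomposition}. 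Each new triple then falls into one of three classes: $\lambda_j=0$ with $x_j \in \Rm^d \cup \{\infty\}$; $\lambda_j=1$ with $x_j \in \{0,\infty\}$; or $\lambda_j = \infty$. This adjustment preserves the linear waves $V_{j,n}$, the almost orthogonality of the triples, and every conclusion of Theorem \ref{profile decomposition}.

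Next I would discard every profile with $\lambda_j = \infty$ or $x_j = \infty$, folding it into the remainder: for each $J$ set
\[
 (\tilde w_{0,n}^J, \tilde w_{1,n}^J) := (w_{0,n}^J, w_{1,n}^J) + \sum_{j\ \text{discarded},\ j\le J}\bigl(V_{j,n}(\cdot,0),\partial_t V_{j,n}(\cdot,0)\bigr).
\]
Each summand on the right has $\|V_{j,n}\|_{Y^\star(\Rm)} \to 0$ as $n\to\infty$ by Lemma \ref{absorbable profiles}, and the finitely many of them added for fixed $J$ therefore contribute $o_n(1)$ in the $Y^\star$ norm. Combining this with part (II) of Theorem \ref{profile decomposition} and the pointwise bound $0<\phi\le 1$ (so $\|\cdot\|_{Y^\star}\le\|\cdot\|_Y$) gives $\limsup_n\|S_L(t)(\tilde w_{0,n}^J,\tilde w_{1,n}^J)\|_{Y^\star(\Rm)} \to 0$ as $J\to\infty$. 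After relabeling the surviving profiles, parts (I), (II) and the first half of (III) of the proposition follow.

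For the $H$-norm bound in (III), Remark \ref{almost orthogonality in H} together with part (III) of Theorem \ref{profile decomposition} yields
\[
 \|(u_{0,n},u_{1,n})\|_H^2 = \|(\tilde w_{0,n}^J,\tilde w_{1,n}^J)\|_H^2 + \sum_{\text{surviving } j \le J} \|V_j\|_H^2 + o_J(n),
\]
so that $\limsup_n\|(\tilde w_{0,n}^J,\tilde w_{1,n}^J)\|_H \le A \le 2A$, and letting first $n\to\infty$ and then $J\to\infty$ also yields the Bessel-type inequality in (IV). The $\phi$-weighted $L^{2^\ast}$ identity in (IV) is precisely the sum identity proved just before Lemma \ref{absorbable profiles}, with the absorbed profiles contributing zero by Lemma \ref{limit of phi V L6}.

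The only real subtlety is the order of limits: the new remainder is small in $Y^\star$ only in the sense $\limsup_n\|\cdot\|_{Y^\star(\Rm)}= o_J(1)$, so throughout one must pass to the $n$-limit first and then let $J\to\infty$. With that discipline observed, the proof amounts to bookkeeping of Theorem \ref{profile decomposition}, Remark \ref{almost orthogonality in H}, Lemmas \ref{limit of phi V L6} and \ref{absorbable profiles}, and the $L^{2^\ast}$ sum identity.
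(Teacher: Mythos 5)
Your proof is correct and follows essentially the same route as the paper: pass to a subsequence and normalize the triples as in the remark after Theorem~\ref{profile decomposition}, absorb the profiles with $\lambda_j=\infty$ or $x_j=\infty$ into the remainder via Lemma~\ref{absorbable profiles}, and then read off (III) and (IV) from Remark~\ref{almost orthogonality in H}, Lemma~\ref{limit of phi V L6}, the $L^{2^\ast}$ sum identity, and part (III) of Theorem~\ref{profile decomposition}. (You in fact obtain the sharper bound $\limsup_n\|(\tilde w_{0,n}^J,\tilde w_{1,n}^J)\|_H \le A$, which of course implies the $2A$ stated in the proposition.)
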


\subsection{Nonlinear Profiles}

Given any linear profile decomposition as in Proposition \ref{modified profile decomposition}, we can assign a nonlinear profile to each linear profile $V_j$. Let us start by introducing the definition of a nonlinear profile. 

\begin{definition} [A nonlinear profile]
 Fix $\tilde{\phi}$ to be either the function $\phi$ or a constant $c$. Let $V(x,t) = S_L (t) (v_0,v_1)$ be a free wave and $\tilde{t} \in [-\infty,\infty]$ be a time. We say that $U(x,t)$ is a nonlinear profile associated to $(V,\tilde{\phi}, \tilde{t})$ if $U(x,t)$ is a solution to the nonlinear wave equation 
\begin{equation} \label{nonlinear profile1}
 \partial_t^2 u - \Delta u = \tilde{\phi} F(u)
\end{equation}
with a maximal timespan $I$ so that $I$ contains a neighbourhood\footnote{A neighbourhood of infinity is $(M, \infty)$ or $(-\infty, M)$} of $\tilde{t}$ and
\[
 \lim_{t \rightarrow \tilde{t}} \|(U(\cdot,t),\partial_t U (\cdot,t))- (V(\cdot,t), \partial_t V(\cdot,t))\|_H = 0.
\]
\end{definition}
\begin{remark} 
Given a triple $(V, \tilde{\phi}, \tilde{t})$ as above, one can show there is always a unique nonlinear profile. Please see Remark 2.13 in \cite{kenig1} for the idea of proof. In particular, if $\tilde{t}$ is finite, then the nonlinear profile is simply the solution to the equation \eqref{nonlinear profile1} with initial data $(V(\cdot, \tilde{t}), \partial_t V(\cdot, \tilde{t}))$ at the time $t = \tilde{t}$. We will also use the fact that the nonlinear profile automatically scatters in the positive time direction if $\tilde{t} = + \infty$. 
\end{remark}


\begin{definition} [Nonlinear Profiles]
 For each linear profile $V_j$ in a profile decomposition as given in Proposition \ref{modified profile decomposition}, we assign a nonlinear profile $U_j$ to it in the following way
 \begin{itemize}
 \item If $(\lambda_j,x_j) = (1,{\mathbf 0})$, then $U_j$ is chosen as the nonlinear profile associated to $(V_j, \phi, t_j)$.
 \item If $\lambda_j = 0$, then $U_j$ is chosen as nonlinear profile associated to $(V_j, \phi(x_j), t_j)$.
 \end{itemize}
In either case, we define 
\[
 U_{j,n}(x,t) \doteq \frac{1}{\lambda_{j,n}^{\frac{d-2}{2}}} U_j \left(\frac{x-x_{j,n}}{\lambda_{j,n}}, \frac{t - t_{j,n}}{\lambda_{j,n}}\right). 
\]
\end{definition}

\begin{lemma} [$U_{j,n}$ is an approximation solution to (CP1)] \label{approximation solution 1}
 If $I'_j$ is a time interval so that $\|U_j\|_{Y(I'_j)} < \infty$, then the error term 
\[
 e_{j,n} =  (\partial_t^2 - \Delta) U_{j,n} - \phi F(U_{j,n})
\]
satisfies the estimate 
\[
 \lim_{n \rightarrow \infty} \|e_{j,n}\|_{L^1 L^2 ((t_{j,n} + \lambda_{j,n} I'_j)\times \Rm^d)}  = 0. 
\]
\end{lemma}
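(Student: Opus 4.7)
The plan is to compute $e_{j,n}$ explicitly via the scaling relation between $U_{j,n}$ and $U_j$, change variables to remove the $(\lambda_{j,n}, x_{j,n}, t_{j,n})$-dependence from the norm, and then conclude by dominated convergence using the continuity of $\phi$. Concretely, with $y = (x - x_{j,n})/\lambda_{j,n}$ and $s = (t - t_{j,n})/\lambda_{j,n}$, the chain rule yields $(\partial_t^2 - \Delta_x) U_{j,n}(x,t) = \lambda_{j,n}^{-(d+2)/2} (\partial_s^2 - \Delta_y) U_j(y,s)$, while the energy-critical scaling gives $F(U_{j,n})(x,t) = \lambda_{j,n}^{-(d+2)/2} F(U_j)(y,s)$. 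Substituting the equation satisfied by $U_j$, I obtain
\[
 e_{j,n}(x,t) = \lambda_{j,n}^{-(d+2)/2}\bigl[\tilde\phi_j(y) - \phi(\lambda_{j,n} y + x_{j,n})\bigr] F(U_j)(y,s),
\]
where $\tilde\phi_j(y) = \phi(y)$ in the case $(\lambda_j, x_j) = (1,{\mathbf 0})$ and $\tilde\phi_j(y) \equiv \phi(x_j)$ in the case $\lambda_j = 0$.

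Next I change variables $t = t_{j,n} + \lambda_{j,n} s$ and $x = x_{j,n} + \lambda_{j,n} y$. The Jacobian factor $\lambda_{j,n}^{d/2}$ from the spatial $L^2$ norm together with the factor $\lambda_{j,n}$ from $dt = \lambda_{j,n}\,ds$ exactly cancels the $\lambda_{j,n}^{-(d+2)/2}$ prefactor; this is simply the scaling invariance of the $L^1 L^2$ norm at the energy-critical level. Consequently
\[
 \|e_{j,n}\|_{L^1 L^2((t_{j,n} + \lambda_{j,n}I'_j) \times \Rm^d)} = \bigl\| \bigl[\tilde\phi_j(y) - \phi(\lambda_{j,n} y + x_{j,n})\bigr] F(U_j)(y,s) \bigr\|_{L^1_s L^2_y(I'_j \times \Rm^d)}.
\]

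Finally I apply dominated convergence twice. In both cases the bracket tends pointwise to zero in $y$ as $n\to\infty$: if $(\lambda_j,x_j) = (1,{\mathbf 0})$ then $\phi(\lambda_{j,n} y + x_{j,n}) \to \phi(y)$ by continuity, while if $\lambda_j = 0$ and $x_j \in \Rm^d$ then $\lambda_{j,n} y + x_{j,n} \to x_j$ for every fixed $y$, hence $\phi(\lambda_{j,n} y + x_{j,n}) \to \phi(x_j)$. Moreover the bracket is uniformly bounded by $2$ since $\phi$ takes values in $(0,1]$. Using $|F(U_j)(y,s)| = |U_j(y,s)|^{p_c}$, one checks
\[
 \|F(U_j)\|_{L^1_s L^2_y(I'_j \times \Rm^d)} = \|U_j\|_{Y(I'_j)}^{p_c} < \infty
\]
by hypothesis, so $2\,\|F(U_j)(\cdot,s)\|_{L^2_y}$ furnishes an $L^1_s$-integrable dominating function. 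Applying DCT inside $L^2_y$ for each fixed $s$, then once more in $s$, yields $\|e_{j,n}\|_{L^1 L^2} \to 0$.

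The main obstacle is minimal here; the proof is essentially a bookkeeping exercise exploiting the critical scaling, with the one substantive input being that the hypothesis $\|U_j\|_{Y(I'_j)} < \infty$ supplies the $L^1_s L^2_y$ dominating function required for DCT.
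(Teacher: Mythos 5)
Your proof is correct and follows essentially the same approach as the paper: compute $e_{j,n}$ from the equation that $U_j$ solves, rescale to pull out the critical $L^1_t L^2_x$ scaling factor so that the norm becomes $\|[\tilde\phi_j(y) - \phi(\lambda_{j,n}y + x_{j,n})]F(U_j)\|_{L^1L^2(I'_j\times\Rm^d)}$, and conclude by pointwise convergence of $\phi(\lambda_{j,n}y+x_{j,n})$ and dominated convergence using $\|F(U_j)\|_{L^1L^2} = \|U_j\|_{Y(I'_j)}^{p_c} < \infty$. The paper treats the cases $(\lambda_j,x_j)=(1,\mathbf 0)$ and $\lambda_j = 0$ separately while you unify them via $\tilde\phi_j$, but the substance is identical.
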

\begin{proof}
If $(\lambda_{j,n}, x_{j,n})  \rightarrow (1,{\mathbf 0})$. We know $U_j$ satisfies 
\[
 \partial_t^2 U_j  -\Delta U_j = \phi(x) F(U_j) \; \Longrightarrow (\partial_t^2 - \Delta) U_{j,n} = \phi \left(\frac{x - x_{j,n}}{\lambda_{j,n}}\right) F(U_{j,n})
\]
Thus we have 
\begin{align*}
 \|e_{j,n}\|_{L^1 L^2 ((t_{j,n} + \lambda_{j,n} I'_j)\times \Rm^d)} = &\left\|\left(\phi \left(\frac{x - x_{j,n}}{\lambda_{j,n}}\right) -\phi(x)\right) F(U_{n,j})\right\|_{L^1 L^2 ((t_{j,n} + \lambda_{j,n} I'_j)\times \Rm^d)}\\
 = & \left\|\left(\phi \left(x\right) -\phi(\lambda_{j,n} x + x_{j,n})\right) F(U_j)\right\|_{L^1 L^2 (I'_j \times \Rm^d)} \rightarrow 0 
\end{align*}
Here we use the point-wise convergence $\phi(\lambda_{j,n} x + x_{j,n}) \rightarrow \phi(x)$ and the dominated convergence theorem. Now we consider the case $(\lambda_{j,n}, x_{j,n}) \rightarrow (0,x_j) $. Since $U_{j}$ is a solution to the equation $\partial_t^2 u -\Delta u = \phi (x_j) F(u)$, we know $U_{j,n}$ is a solution to the same equation. This implies  $e_{j,n} = \phi(x_j) F(U_{j,n}) - \phi F(U_{j,n})$. As a result we have 
\begin{align*}
 \|e_{j,n}\|_{L^1 L^2 ((t_{j,n} + \lambda_{j,n} I'_j)\times \Rm^d)}  = & \|(\phi(x_j)-\phi (x)) F(U_{j,n})\|_{L^1 L^2 ((t_{j,n} + \lambda_{j,n} I'_j)\times \Rm^d)}\\
 =  & \| (\phi(x_j) - \phi(\lambda_{j,n} x + x_{j,n})) F(U_j)\|_{L^1 L^2 (I'_j \times \Rm^d)} \rightarrow 0
\end{align*}
by the point-wise convergence $\phi(\lambda_{j,n} x + x_{j,n}) \rightarrow \phi(x_j)$ and the dominated convergence theorem. 
\end{proof}

We also need the following lemmas in order to find an upper bound for the $Y$ norm of $\sum_{j} U_{j,n}$.

\begin{lemma}[Almost Orthogonality of $U_{j,n}$] \label{general almost orthogonality}
Assume $\|\tilde{U}_j\|_{L^q L^r (I'_j \times \Rm^d)} < \infty$ for $j=1,2$. Here $L^q L^r$ is a Strichartz norm with $q < \infty$, i.e. we have  
\begin{align*}
 &2 \leq q, r < \infty;& &\frac{1}{q} + \frac{d}{r} = \frac{d}{2} - 1.&
\end{align*}
 Let $\{(\lambda_{1,n}, x_{1,n}, t_{1,n})\}_{n \in {\mathbb Z}^+}$ and $\{(\lambda_{2,n}, x_{2,n}, t_{2,n})\}_{n \in {\mathbb Z}^+}$  be two ``almost orthogonal'' sequences of triples, i.e. 
\[
 \lim_{n \rightarrow +\infty} \left(\frac{\lambda_{2,n}}{\lambda_{1,n}}+ \frac{\lambda_{1,n}}{\lambda_{2,n}} + \frac{|x_{1,n} - x_{2,n}|}{\lambda_{1,n}}+ \frac{|t_{1,n} - t_{2,n}|}{\lambda_{1,n}}\right) = + \infty.
\]    
If $I_n$ is a sequence of time intervals, such that $I_n \subseteq (t_{1,n} + \lambda_{1,n}I'_1)\cap (t_{2,n} + \lambda_{2,n}I'_2)$, then we have 
\[
 N(n) = \left\|\tilde{U}_{1,n} \tilde{U}_{2,n} \right\|_{L_t^{q/2} L_x^{r/2} (I_n \times \Rm^d)} \rightarrow 0, \qquad \hbox{as}\;\; n \rightarrow \infty. 
\]
Here $\tilde{U}_{j,n}$ is defined as usual 
\[
 \tilde{U}_{j,n} = \frac{1}{\lambda_{j,n}^{\frac{d-2}{2}}} \tilde{U}_j \left(\frac{x-x_{j,n}}{\lambda_{j,n}}, \frac{t - t_{j,n}}{\lambda_{j,n}}\right) 
\]
\end{lemma}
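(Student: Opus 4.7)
The plan is to adapt the proof of Lemma \ref{L6 ortho 1} to the space-time setting. First observe that since the Strichartz exponents satisfy $1/q + d/r = (d-2)/2$, the rescaling $(\lambda_{j,n}, x_{j,n}, t_{j,n})$ is exactly the one that preserves the $L^q L^r$ norm, so $\|\tilde{U}_{j,n}\|_{L^q L^r} = \|\tilde{U}_j\|_{L^q L^r}$; combined with H\"older's inequality this shows that the bilinear map $(\tilde{U}_1,\tilde{U}_2)\mapsto \sup_n N(n)$ is continuous on $L^q L^r(I'_1\times\Rm^d) \times L^q L^r(I'_2\times\Rm^d)$. Since $q,r<\infty$, bounded and compactly supported space-time functions are dense in these spaces, so I may reduce to the case $|\tilde{U}_j| \leq M_j$ with $\hbox{Supp}(\tilde{U}_j) \subseteq \{(x,t): |x| \leq R_j,\, |t| \leq T_j\}$ for $j = 1, 2$.

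Next I perform a change of variables $y = (x-x_{1,n})/\lambda_{1,n}$, $s = (t-t_{1,n})/\lambda_{1,n}$ centered around the first profile. Writing $\mu_n = \lambda_{1,n}/\lambda_{2,n}$, $\xi_n = (x_{1,n}-x_{2,n})/\lambda_{2,n}$, and $\tau_n = (t_{1,n}-t_{2,n})/\lambda_{2,n}$, the almost-orthogonality hypothesis translates to $\mu_n + \mu_n^{-1} + |\xi_n|/\mu_n + |\tau_n|/\mu_n \to \infty$. A direct computation, tracking Jacobians and the amplitude factors and using the Strichartz identity once more, reduces $N(n)$ to the expression $\mu_n^{(d-2)/2}$ times the $L^{q/2}_s L^{r/2}_y$ norm of $\tilde{U}_1(y,s)\,\tilde{U}_2(\mu_n y + \xi_n,\, \mu_n s + \tau_n)$ over the rescaled image of $I_n$.

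I then proceed by case analysis along subsequences. If $\mu_n \to 0$, then the second factor is pointwise bounded by $M_2$, so the inner norm stays bounded by a constant depending only on $M_1, M_2, R_1, T_1$, and the prefactor $\mu_n^{(d-2)/2}$ drives $N(n) \to 0$. If $\mu_n \to \infty$, I repeat the argument after rescaling around the second profile instead to obtain the symmetric prefactor $\mu_n^{-(d-2)/2}$. If $\mu_n$ stays bounded above and below, the almost-orthogonality condition forces $|\xi_n| + |\tau_n| \to \infty$; but then the support of $\tilde{U}_2(\mu_n y + \xi_n, \mu_n s + \tau_n)$ eventually lies outside $\{|y|\leq R_1, |s| \leq T_1\}$, so the product vanishes identically for $n$ large. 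The main technical nuisance is simply the bookkeeping in the change of variables, verifying that the powers of $\lambda_{1,n}$ from the Jacobian, the amplitude factor $\lambda_{1,n}^{-(d-2)/2}$, and the $L^{q/2}_t L^{r/2}_x$ measures combine via $1/q + d/r = (d-2)/2$ to leave the clean prefactor $\mu_n^{(d-2)/2}$; everything else is routine.
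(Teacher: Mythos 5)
Your proposal is correct and follows essentially the same route as the paper: reduce by density and the scale-invariance/H\"older continuity of the bilinear expression to bounded, compactly supported profiles, then split into the cases where the scale ratio tends to $0$, to $\infty$, or stays comparable, using the $\mu_n^{\pm(d-2)/2}$ gain in the first two cases and disjointness of supports (forced by $|\xi_n|+|\tau_n|\to\infty$) in the third. The only difference is cosmetic: you extract the prefactor by a change of variables into the frame of one profile, while the paper gets the same factor by directly combining the $L^\infty$ bound of the product with the measure of the smaller profile's supporting cylinder.
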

\begin{proof} (See also Lemma 2.7 in \cite{orthogonality}) First of all, by defining $\tilde{U}_j (x,t) = 0$ for $t \notin I'_j$, we can always assume $I'_j = \Rm$ and $I_n = \Rm$. Observing the continuity of the map 
\[
 \Phi: L^q L^r (\Rm \times \Rm^d) \times L^q L^r (\Rm \times \Rm^d) \rightarrow l^\infty,\quad \Phi (\tilde{U}_1,\tilde{U}_2) = \left\{\left\|\tilde{U}_{1,n} \tilde{U}_{2,n}\right\|_{L^{q/2} L^{r/2}(\Rm \times \Rm^d)}\right\}_{n \in {\mathbb Z}^+},
\]
we can also assume, without loss of generality, that 
\begin{align*}
 &|\tilde{U}_j (x,t)| \leq M_j, \; \hbox{for any}\; (x,t)\in \Rm^d \times \Rm;& &\hbox{Supp} (\tilde{U}_j) \subseteq \{(x,t): |x|,|t| < R_j\} &
\end{align*}
for each $j = 1,2$ and some constants $M_j$ and $R_j$, since the functions satisfying these conditions are dense in the space $L^q L^r (\Rm \times \Rm^d)$.  If the conclusion were false, we would find a sequence $n_1 < n_2 < n_3 < \cdots$ and a positive constant $\eps_0$ such that $ N(n_k) \geq \eps_0$. There are three cases
\begin{itemize}
 \item [(I)] $\limsup_{k \rightarrow \infty} \lambda_{1, n_k}/ \lambda_{2,n_k} = \infty$. In this case the product $\tilde{U}_{1,n_k} \tilde{U}_{2,n_k}$ is supported in the $(d+1)$-dimensional circular cylinder centred at $(x_{2,n_k}, t_{2,n_k})$ with radius $\lambda_{2,n_k} R_2$ and height $2\lambda_{2,n_k} R_2$ since $\tilde{U}_{2,n_k}$ is supported in this cube. On the other hand, we have 
 \[
  \left|\tilde{U}_{1,n_k} \tilde{U}_{2,n_k}\right| \leq \lambda_{1,n_k}^{-\frac{d-2}{2}} \lambda_{2,n_k}^{-\frac{d-2}{2}} M_1 M_2.
 \] 
A basic computation shows 
\[
N(n_k) = \left\|\tilde{U}_{1,n_k} \tilde{U}_{2,n_k} \right\|_{L^{q/2} L^{r/2} (\Rm \times \Rm^d)} \leq C(d) M_1 M_2 R_2^{d-2} \left(\frac{\lambda_{2, n_k}}{\lambda_{1,n_k}}\right)^{\frac{d-2}{2}}. 
\]
The upper bound tends to zero as $\lambda_{1, n_k}/ \lambda_{2,n_k} \rightarrow \infty$. Thus we have a contradiction. 
\item [(II)] $\limsup_{k \rightarrow \infty} \lambda_{2, n_k}/ \lambda_{1,n_k} = \infty$. This can be handled in the same way as case (I).
\item [(III)] $\lambda_{1, n_k} \simeq \lambda_{2, n_k}$. By the ``almost orthogonality'' of the sequences of triples, we also have 
\[
 \frac{|x_{1,n_k} - x_{2,n_k}|}{\lambda_{1,n_k}}+ \frac{|t_{1,n_k} - t_{2,n_k}|}{\lambda_{1,n_k}} \rightarrow \infty.
\]
This implies $\hbox{Supp}(\tilde{U}_{1,n_k}) \cap \hbox{Supp}(\tilde{U}_{2,n_k}) = \emptyset$ when $k$ is sufficiently large thus gives a contradiction. 
\end{itemize}
\end{proof}

\begin{lemma} \label{uniform estimate in J}
Let $I'_j$ be a time interval such that $\|U_j(x,t)\|_{Y(I'_j)} < \infty$. Suppose $\{J_n\}$ is a sequence of time intervals, so that $\displaystyle J_n \subseteq \cap_{j =1}^J (t_{j,n} + \lambda_{j,n} I'_j)$ holds for sufficiently large $n$. Then we have
\begin{align*}
 &\lim_{n \rightarrow \infty} \left\|F\left(\sum_{j=1}^J U_{j,n}\right) - \sum_{j=1}^J F(U_{j,n})\right\|_{L^1 L^2 (J_n \times \Rm^d)} = 0.\\
 &\limsup_{n \rightarrow \infty} \left\|\sum_{j=1}^J U_{j,n} \right\|_{Y(J_n)} \leq \left(\sum_{j=1}^{J}\|U_j\|_{Y(I'_j)}^{p_c}\right)^{1/p_c}.
\end{align*}
\end{lemma}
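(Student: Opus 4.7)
The plan is to reduce the second inequality to the first via the pointwise identity $|F(u)|=|u|^{p_c}$, and to establish the first by a telescoping argument combined with Lemma \ref{general almost orthogonality}. I would first record the scaling fact $\|U_{j,n}\|_{Y(J_n)} \leq \|U_{j,n}\|_{Y(t_{j,n}+\lambda_{j,n}I'_j)} = \|U_j\|_{Y(I'_j)}$, so that the constant $A_J := \sum_{j=1}^{J}\|U_j\|_{Y(I'_j)}$ uniformly dominates $\|U_{j,n}\|_{Y(J_n)}$ in $n$.

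Next I would decompose telescopically
\[
 F\Bigl(\sum_{j=1}^{J} U_{j,n}\Bigr) - \sum_{j=1}^{J} F(U_{j,n}) = \sum_{k=2}^{J}\Bigl[F\Bigl(\sum_{j=1}^{k} U_{j,n}\Bigr) - F\Bigl(\sum_{j=1}^{k-1} U_{j,n}\Bigr) - F(U_{k,n})\Bigr],
\]
and control each summand via a double integration of $F''$. Since $p_c=(d+2)/(d-2)\geq 7/3 > 2$ for $d\in\{3,4,5\}$, $F$ is $C^2$ with $|F''(u)|\lesssim |u|^{p_c-2}$, which yields the pointwise bound
\[
 |F(a+b)-F(a)-F(b)| \lesssim |a|\,|b|\,(|a|+|b|)^{p_c-2}.
\]
Applied with $a=\sum_{j<k}U_{j,n}$ and $b=U_{k,n}$, each summand is pointwise controlled by $\sum_{j<k}|U_{j,n}||U_{k,n}|\bigl(\sum_{\ell=1}^{J}|U_{\ell,n}|\bigr)^{p_c-2}$.

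I would then apply H\"older with time exponents $(p_c/2,\; p_c/(p_c-2))$ and spatial exponents $(p_c,\; 2p_c/(p_c-2))$, which are exact conjugates for $L^1_t L^2_x$, to obtain
\[
 \Bigl\| |U_{j,n}||U_{k,n}|\bigl({\textstyle\sum_\ell}|U_{\ell,n}|\bigr)^{p_c-2} \Bigr\|_{L^1L^2(J_n\times\Rm^d)} \leq \bigl\||U_{j,n}||U_{k,n}|\bigr\|_{L^{p_c/2}L^{p_c}(J_n\times\Rm^d)} A_J^{p_c-2}.
\]
Lemma \ref{general almost orthogonality}, invoked with the Strichartz-admissible pair $q=p_c,\; r=2p_c$ (for which $q/2=p_c/2$ and $r/2=p_c$), forces each cross-norm $\||U_{j,n}||U_{k,n}|\|_{L^{p_c/2}L^{p_c}}\to 0$ for $j\neq k$ thanks to the almost orthogonality of the triples $(\lambda_{j,n},x_{j,n},t_{j,n})$. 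Summing over the finitely many pairs $(j,k)$ then proves the first assertion.

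For the second assertion, the pointwise identity $|F(u)|=|u|^{p_c}$ gives $\|\sum_j U_{j,n}\|_{Y(J_n)}^{p_c} = \|F(\sum_j U_{j,n})\|_{L^1L^2(J_n\times\Rm^d)}$. Combining this with the triangle inequality, the first assertion just proved, and the identity $\|F(U_{j,n})\|_{L^1L^2(J_n\times\Rm^d)}=\|U_{j,n}\|_{Y(J_n)}^{p_c}\leq \|U_j\|_{Y(I'_j)}^{p_c}$, one obtains
\[
 \Bigl\|\sum_{j=1}^{J} U_{j,n}\Bigr\|_{Y(J_n)}^{p_c} \leq \sum_{j=1}^{J}\|U_j\|_{Y(I'_j)}^{p_c} + o(1),
\]
and taking the $p_c$-th root followed by $\limsup_{n\to\infty}$ yields the claimed inequality. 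The only step with any subtlety is the pointwise estimate for $F(a+b)-F(a)-F(b)$, but because $p_c>2$ uniformly over $d\in\{3,4,5\}$ the usual $C^2$ bound suffices and no low-regularity modification is required.
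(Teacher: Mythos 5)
Your argument is correct and follows essentially the same route as the paper: the telescoping decomposition is just the paper's induction written out, the pointwise bound via the double integral of $F''$ (valid since $p_c \geq 7/3 > 2$ for $3\leq d\leq 5$), the H\"older step with exponents $(p_c/2, p_c)$ against $Y$-norm factors, and the application of Lemma \ref{general almost orthogonality} with $q=p_c$, $r=2p_c$ are exactly the paper's steps, as is the deduction of the second bound from the first. No gaps.
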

\paragraph{Proof} First of all, for any $J \geq 2$ we have 
\begin{align*}
& \lim_{n \rightarrow \infty} \left\|F\left(\sum_{j=1}^J U_{j,n}\right) - F\left(\sum_{j=1}^{J-1} U_{j,n}\right) - F(U_{J,n})\right\|_{L^1 L^2 (J_n \times \Rm^d)}\\
=& \lim_{n \rightarrow \infty} \left\|\left[U_{J,n} \int_0^1 F'\left(\tau U_{J,n} + \sum_{j=1}^{J-1} U_{j,n}\right) d\tau \right]
- \left[U_{J,n} \int_0^1 F'(\tau U_{J,n})d\tau\right] \right\|_{L^1 L^2 (J_n \times \Rm^d)}\\
=& \lim_{n \rightarrow \infty} \left\| \left(U_{J,n} \sum_{j=1}^{J-1} U_{j,n}\right) \left( \int_0^1 \int_0^1
F''\left(\tau U_{J,n} + \tilde{\tau}\sum_{j=1}^{J-1} U_{j,n}\right) d\tilde{\tau} d\tau\right)\right\|_{L^1 L^2 (J_n \times \Rm^d)}\\
\leq& \lim_{n \rightarrow \infty} \left(\sum_{j=1}^{J-1} \|U_{J,n} U_{j,n}\|_{L^{p_c/2} L^{p_c} (J_n \times \Rm^d)}\right)
\left( \sum_{j=1}^{J}\|U_{j,n}\|_{Y(J_n)}\right)^{p_c -2}\\
\leq& \lim_{n \rightarrow \infty}  \left(\sum_{j=1}^{J-1} \|U_{J,n} U_{j,n}\|_{L^{p_c/2} L^{p_c} (J_n \times \Rm^d)}\right)
\left( \sum_{j=1}^{J}\|U_j\|_{Y(I'_j)}\right)^{p_c -2}\\
=& 0.
\end{align*}
In the final step we apply Lemma \ref{general almost orthogonality}. As a result we can prove the first limit by an induction. The second limit is a corollary. 
\begin{align*}
 \limsup_{n\rightarrow \infty} \left\|\sum_{j=1}^J U_{j,n}\right\|_{Y(J_n)}^{p_c}
 =& \limsup_{n \rightarrow \infty} \left\|F\left(\sum_{j=1}^J U_{j,n}\right)\right\|_{L^1 L^2 (J_n \times \Rm^d)}\\
 = & \limsup_{n \rightarrow \infty} \left\|\sum_{j=1}^J F(U_{j,n})\right\|_{L^1 L^2 (J_n \times \Rm^d)} \\
 \leq& \limsup_{n \rightarrow \infty} \left(\sum_{j=1}^{J}\|F(U_{j,n})\|_{L^1 L^2 (J_n \times \Rm^d)}\right)\\
 = & \limsup_{n \rightarrow \infty} \left(\sum_{j=1}^{J}\|U_{j,n}\|_{Y(J_n)}^{p_c}\right)\\
 \leq& \sum_{j=1}^{J}\|U_{j}\|_{Y(I'_j)}^{p_c}.
\end{align*}

\begin{lemma} [Energy of a Nonlinear Profile] \label{the energy of Uj}
Let $U_j$ be the nonlinear profile as above. Then we have the energy defined by
\[
 E(U_j) = \left\{\begin{array}{ll} \int_{\Rm^d} \left(\frac{1}{2}|\nabla U_j(x,t)|^2 + \frac{1}{2}|\partial_t U_j(x,t)|^2 - \frac{\zeta \phi}{2^\ast} |U_j(x,t)|^{2^\ast}  \right) dx, & \hbox{if}\; \lambda_j = 1;\\
  \int_{\Rm^d} \left(\frac{1}{2}|\nabla U_j(x,t)|^2 + \frac{1}{2} |\partial_t U_j(x,t)|^2 - \frac{\zeta \phi(x_j)}{2^\ast} |U_j(x,t)|^{2^\ast}  \right) dx, & \hbox{if}\; \lambda_j = 0;
 \end{array}\right.
\]
satisfies 
\[
 E(U_j) = \frac{1}{2}\|V_j\|_H^2 - \frac{\zeta}{2^\ast} \lim_{n \rightarrow \infty} \int_{\Rm^d} \phi |V_{j,n} (x,0)|^{2^\ast} dx.  
\]
\end{lemma}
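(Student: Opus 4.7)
The plan is to exploit the conservation of energy for $U_j$ together with the defining property $\|(U_j, \partial_t U_j) - (V_j, \partial_t V_j)\|_H \to 0$ as $t \to t_j$, and then match the potential term with Lemma \ref{limit of phi V L6}.

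First I would note that since $U_j$ solves $\partial_t^2 u - \Delta u = \tilde{\phi} F(u)$ with $\tilde{\phi} = \phi$ (if $\lambda_j = 1$, in which case necessarily $x_j = 0$) or $\tilde{\phi} = \phi(x_j)$ (if $\lambda_j = 0$), the quantity $E(U_j)$ in the lemma is exactly the conserved energy for this equation. Hence $E(U_j)$ is time-independent on the maximal lifespan, and we may evaluate it by passing to the limit $t \to t_j$.

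Next I would split the potential and kinetic/gradient parts. The defining property of the nonlinear profile gives $(U_j(\cdot,t), \partial_t U_j(\cdot,t)) \to (V_j(\cdot,t), \partial_t V_j(\cdot,t))$ in $\dot{H}^1 \times L^2$ as $t \to t_j$. Because $V_j$ is a free wave, $\|V_j(\cdot,t)\|_H$ is constant in $t$ and equals $\|V_j\|_H$, so the kinetic-plus-gradient contribution to $E(U_j)$ converges to $\tfrac{1}{2}\|V_j\|_H^2$. For the potential term, the Sobolev embedding $\dot{H}^1 \hookrightarrow L^{2^\ast}$ together with the elementary bound
\[
\Bigl|\int \tilde{\phi}|U_j|^{2^\ast} dx - \int \tilde{\phi}|V_j|^{2^\ast} dx \Bigr| \lesssim \|U_j - V_j\|_{L^{2^\ast}}\bigl(\|U_j\|_{L^{2^\ast}}^{2^\ast-1} + \|V_j\|_{L^{2^\ast}}^{2^\ast-1}\bigr)
\]
(valid since $0 \leq \tilde{\phi} \leq 1$) shows
\[
\lim_{t \to t_j}\int \tilde{\phi}|U_j(x,t)|^{2^\ast} dx = \lim_{t \to t_j}\int \tilde{\phi}|V_j(x,t)|^{2^\ast} dx.
\]

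Finally I would identify this limit with $\lim_{n\to\infty} \int \phi |V_{j,n}(x,0)|^{2^\ast} dx$. When $t_j \in \Rm$, the integrand $V_j(\cdot,t)$ is continuous in $t$ in $L^{2^\ast}$, so the limit is just $\int \tilde{\phi}|V_j(x,t_j)|^{2^\ast} dx$, which is precisely the value given by Lemma \ref{limit of phi V L6} in the two relevant subcases ($\lambda_j = 1$, $x_j = 0$) and ($\lambda_j = 0$, $x_j \in \Rm^d$). When $t_j = \pm\infty$, Lemma \ref{L6 tends to zero} forces $\|V_j(\cdot,t)\|_{L^{2^\ast}} \to 0$, hence both the $t \to t_j$ limit and, by Lemma \ref{limit of phi V L6}, the $n \to \infty$ limit vanish. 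In every case the two sides match, yielding the stated identity.

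There is no real obstacle here; the only point requiring slight care is the case $t_j = \pm\infty$, since the nonlinear profile is not literally given by initial data at time $t_j$. This is handled by invoking energy conservation on the interval of existence (which contains a neighbourhood of $\pm\infty$ by definition, since the profile scatters) and passing to the limit along any sequence $t_k \to t_j$, using the uniform bound on $\|U_j(\cdot,t)\|_{L^{2^\ast}}$ inherited from the energy trapping / Sobolev embedding to justify the convergence of the potential term.
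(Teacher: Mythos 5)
Your proposal is correct and follows essentially the same route as the paper: use time-independence of $E(U_j)$, evaluate as $t \to t_j$ using the defining convergence $(U_j,\partial_t U_j) \to (V_j,\partial_t V_j)$ in $H$, and identify the potential term via Lemma \ref{limit of phi V L6} when $t_j$ is finite and via Lemma \ref{L6 tends to zero} when $t_j = \pm\infty$. The only cosmetic difference is that the paper evaluates the energy exactly at $t = t_j$ in the finite case (where $U_j = V_j$), making your elementary difference estimate unnecessary there, and your appeal to energy trapping for the uniform $L^{2^\ast}$ bound is superfluous since the $H$-convergence plus Sobolev already gives it.
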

\begin{remark}
According to our definition of nonlinear profiles, the energy defined above does not depend on the time $t$. 
\end{remark}
\begin{proof} There are two cases. 
\paragraph{Case 1} If $t_j = +\infty$, then the definition of the nonlinear profile gives 
\begin{equation} \label{nonlinear profile limit infinity}
 \lim_{t\rightarrow +\infty} \left\|\begin{pmatrix} U_j (\cdot,t)\\ \partial_t U_j (\cdot,t)\end{pmatrix} - \begin{pmatrix} V_j (\cdot,t)\\ \partial_t V_j (\cdot,t)\end{pmatrix}\right\|_{\dot{H}^1 \times L^2 (\Rm^d)} = 0.
\end{equation}
By the Sobolev embedding, we also have $\|U_j(\cdot,t)- V_j (\cdot, t)\|_{L^{2^\ast}} \rightarrow 0$ as $t \rightarrow + \infty$. Applying Lemma \ref{L6 tends to zero}, we obtain
\[
 \lim_{t \rightarrow +\infty} \int_{\Rm^d} |U_j (x,t)|^{2^\ast} dx = 0. 
\]
Combining this with \eqref{nonlinear profile limit infinity}, we can evaluate the energy at larger and larger times $t$ and obtain $E(U_j) = \frac{1}{2} \|V_j\|_H^2$. This finishes the proof, since we also know 
\[
 \lim_{n \rightarrow \infty} \int_{\Rm^d}  |V_{j,n} (x,0)|^{2^\ast} dx = \lim_{n \rightarrow \infty} \int_{\Rm^d} |V_j  (x, -t_{j,n}/\lambda_{j,n})|^{2^\ast} dx = 0.
\]
The same argument works if $t_j = - \infty$. 
\paragraph{Case 2} If $t_j$ is finite, we can immediately conclude the proof if we evaluate the energy at $t = t_j$ by using Lemma \ref{limit of phi V L6} and the fact $(U_j(\cdot, t_j), \partial_t U_j(\cdot,t_j)) = (V_j(\cdot, t_j), \partial_t V_j (\cdot, t_j))$. 
\end{proof}
Combining Lemma \ref{the energy of Uj} and part (IV) of our modified profile decomposition (Proposition \ref{modified profile decomposition}), we obtain 
\begin{corollary} \label{sum of energy}
 Let $U_j$'s be the nonlinear profiles with energy $E(U_j)$ as defined above. Then we have the inequality 
\[
 \sum_{j=1}^\infty E(U_j) \leq \liminf_{n \rightarrow \infty} E_\phi (u_{0,n}, u_{1,n}). 
\]
\end{corollary}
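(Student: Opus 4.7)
The plan is to sum the identity established in Lemma \ref{the energy of Uj} over all $j$ and then invoke the two conclusions in part (IV) of Proposition \ref{modified profile decomposition}. The identity from Lemma \ref{the energy of Uj} reads
$$E(U_j) = \tfrac{1}{2}\|V_j\|_H^2 - \tfrac{\zeta}{2^\ast}\lim_{n \to \infty}\int_{\Rm^d} \phi(x)\,|V_{j,n}(x,0)|^{2^\ast}\,dx$$
for each $j$, with the understanding that the effective value of $\phi$ inside $E(U_j)$ is either $\phi(x)$ or the constant $\phi(x_j)$ depending on the scale regime, exactly as in the statement of that lemma. Summing in $j$ gives
$$\sum_{j=1}^\infty E(U_j) = \tfrac{1}{2}\sum_{j=1}^\infty\|V_j\|_H^2 - \tfrac{\zeta}{2^\ast}\sum_{j=1}^\infty\lim_{n \to \infty}\int_{\Rm^d} \phi\,|V_{j,n}(x,0)|^{2^\ast}\,dx.$$

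The first sum is controlled by the $H$-norm decoupling in part (IV) of Proposition \ref{modified profile decomposition}, which yields $\sum_j \|V_j\|_H^2 \leq \liminf_n \|(u_{0,n},u_{1,n})\|_H^2$ (an inequality rather than an equality because the remainder $(w_{0,n}^J, w_{1,n}^J)$ may contribute nontrivially). The second sum is handled by the $L^{2^\ast}$ decoupling in the same proposition, which provides the exact identity $\sum_j \lim_n \int \phi\,|V_{j,n}(x,0)|^{2^\ast}\,dx = \lim_n \int \phi\,|u_{0,n}|^{2^\ast}\,dx$. Substituting both into the displayed sum produces
$$\sum_{j=1}^\infty E(U_j) \leq \tfrac{1}{2}\liminf_{n \to \infty}\|(u_{0,n},u_{1,n})\|_H^2 - \tfrac{\zeta}{2^\ast}\lim_{n \to \infty}\int_{\Rm^d}\phi\,|u_{0,n}|^{2^\ast}\,dx.$$

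Since the $L^{2^\ast}$ term on the right is a genuine convergent limit rather than only a $\liminf$, it can be absorbed into the first term to give
$$\tfrac{1}{2}\liminf_{n}\|(u_{0,n},u_{1,n})\|_H^2 - \tfrac{\zeta}{2^\ast}\lim_{n}\int\phi\,|u_{0,n}|^{2^\ast}\,dx = \liminf_{n\to\infty} E_\phi(u_{0,n},u_{1,n}),$$
which finishes the proof. I do not anticipate any substantive difficulty: the sign $\zeta = \pm 1$ is harmless precisely because the $L^{2^\ast}$ contribution arrives as a true limit, and absolute summability of the series of energies follows from $\sum_j \|V_j\|_H^2 \leq A^2$ combined with the Sobolev embedding. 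The only minor bookkeeping point is interchanging the infinite sum over $j$ with the inequality from part (IV), which is most cleanly justified by first working with finite partial sums $\sum_{j=1}^J$ and then passing to the limit $J \to \infty$.
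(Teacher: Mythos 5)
Your argument is correct and is essentially the paper's own proof: the paper simply states that the corollary follows by combining Lemma \ref{the energy of Uj} with part (IV) of Proposition \ref{modified profile decomposition}, which is precisely what you do. Your additional remarks (summing via finite partial sums, absolute summability, and using that the $L^{2^\ast}$ decoupling is a true limit so it combines with the $\liminf$ of the $H$-norms) are exactly the bookkeeping the paper leaves implicit.
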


\section{Compactness Procedure}

In this section we prove the following proposition
\begin{proposition} \label{compactness process}
Assume that $\phi$ satisfies the condition \eqref{basic condition of phi}. If the statement SC($\phi$, M) breaks down at $M_0 < E_1 (W,0)$, i.e. the statement holds for all $M \leq M_0$ but fails for any $M > M_0$, then there exists a critical element $u$, which is a solution to (CP1) in the focusing case with initial data $(u_0,u_1)$ such that
 \begin{itemize}
  \item The energy $E_\phi (u_0,u_1) = M_0$; 
  \item The solution exists globally in time with $\|u\|_{Y^\star ([0,\infty))} = \|u\|_{Y^\star ((-\infty,0])} = + \infty$. 
  \item The norm $\|(u(\cdot, t), u_t (\cdot,t))\|_{\dot{H}^1 \times L^2 (\Rm^d)} < \|\nabla W\|_{L^2}$ for each $t \in \Rm$.
  \item The set $\{(u(t), \partial_t u(t))| t\in \Rm\}$ is pre-compact in $\dot{H}^1 \times L^2 (\Rm^d)$. 
 \end{itemize}
\end{proposition}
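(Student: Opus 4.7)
The plan is a standard Kenig--Merle compactness-extraction argument, with the main technical novelty being a careful treatment of concentrating profiles, whose nonlinear profile satisfies the constant-coefficient equation $\partial_t^2 u - \Delta u = \phi(x_j) F(u)$ rather than (CP1) itself.

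\textbf{Step 1 (minimizing sequence and profile decomposition).} Since SC$(\phi,M_0)$ holds but SC$(\phi,M)$ fails for every $M>M_0$, one chooses initial data $(u_{0,n},u_{1,n}) \in H$ with $\|\nabla u_{0,n}\|_{L^2} < \|\nabla W\|_{L^2}$, energies $E_\phi(u_{0,n},u_{1,n}) \searrow M_0$, and solutions $u_n$ satisfying $\|u_n\|_{Y^\star([0,T_+(u_{0,n},u_{1,n})))} = +\infty$ (after extracting a subsequence and possibly reversing time). Apply Proposition \ref{modified profile decomposition} to obtain linear profiles $V_j$, parameters $(\lambda_{j,n},x_{j,n},t_{j,n})$, and remainders $(w_{0,n}^J,w_{1,n}^J)$, and assign to each $V_j$ its nonlinear profile $U_j$ from Subsection~3.2. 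Part (IV) of Proposition \ref{modified profile decomposition} combined with the energy trapping Proposition \ref{energy trapping} forces each profile to be strictly sub-threshold.

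\textbf{Step 2 (a single profile saturates the energy).} Suppose for contradiction that at least two nonlinear profiles $U_j$ are nonzero, or that $\|(w_{0,n}^J,w_{1,n}^J)\|_H$ does not vanish as $J,n\to\infty$. Then Corollary \ref{sum of energy} forces every individual $E(U_j) < M_0$ strictly. For a stable profile ($\lambda_j = 1$), the inductive hypothesis SC$(\phi,M_0)$ itself supplies a global bound $\|U_j\|_Y \le \beta(E(U_j))$; for a concentrating profile ($\lambda_j = 0$ with $\phi(x_j) = c \le 1$), Corollary \ref{critical focusing phi equal c} does the same, using the strict inequalities $E(U_j) < M_0 < E_1(W,0) \le c^{-(d-2)/2}E_1(W,0)$ together with the rescaled gradient bound. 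Combining the almost-orthogonality of Lemmas \ref{general almost orthogonality}--\ref{uniform estimate in J} with the equation-error estimate of Lemma \ref{approximation solution 1}, the superposition
\[
 \tilde u_n(x,t) = \sum_{j=1}^J U_{j,n}(x,t) + S_L(t)(w_{0,n}^J,w_{1,n}^J)
\]
has $Y^\star$-norm uniformly bounded in $(n,J)$, while its equation error and the $Y^\star$-smallness of $S_L(t)(w_{0,n}^J,w_{1,n}^J)$ for large $J$ fulfil the hypotheses of the long-time perturbation theorem (Theorem \ref{perturbation theory in Y star}). This yields $\|u_n\|_{Y^\star} < \infty$ for $n$ large, contradicting non-scattering. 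Hence exactly one nonlinear profile $U_1$ is nonzero, $E(U_1) = M_0$, and $(w_{0,n}^J,w_{1,n}^J) \to 0$ in $H$. The same perturbation argument applied to the single surviving profile shows that $U_1$ must itself be non-scattering; in particular, the case $\lambda_1 = 0$ is again excluded by Corollary \ref{critical focusing phi equal c}. Consequently $\lambda_1 = 1$, $x_1 = 0$, and $U_1$ is a genuine solution of (CP1); a time translation by the limit $t_1$ produces the critical element $u$.

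\textbf{Step 3 (pre-compactness and global existence).} For any sequence $\tau_n$ in the maximal interval of existence $I$ of $u$, the translated data $(u(\cdot,\tau_n),\partial_t u(\cdot,\tau_n))$ again form a non-scattering minimizing sequence at energy exactly $M_0$ with gradient below $\|\nabla W\|_{L^2}$; rerunning Steps 1--2 on this sequence forces a single profile with vanishing remainder, which is exactly the statement that some subsequence of $(u(\cdot,\tau_n),\partial_t u(\cdot,\tau_n))$ converges in $H$. Thus $\{(u(\cdot,t),\partial_t u(\cdot,t)):t\in I\}$ is pre-compact in $H$. If $T_+(u_0,u_1) < \infty$, Remark \ref{lifespan lower bound for compact set} applied to this pre-compact orbit contradicts the standard finite-time blow-up criterion; hence $T_+ = +\infty$, and symmetrically $T_- = +\infty$. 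The chief obstacle is Step 2, where one must simultaneously rule out concentrating profiles even when $\phi(x_j)$ attains the upper bound $1$ (handled using the strict inequality $M_0 < E_1(W,0)$) and control the nonlinear interaction of infinitely many profiles via the almost-orthogonality lemmas so that Theorem \ref{perturbation theory in Y star} can be applied.
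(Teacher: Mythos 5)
Your proposal follows the paper's compactness--rigidity scheme closely: profile decomposition, assignment of nonlinear profiles, an energy-minimality argument combining SC$(\phi,M_0)$ with Corollary~\ref{critical focusing phi equal c} to rule out concentrating and sub-threshold profiles, perturbation theory to exclude multiple profiles, and a re-run of the argument along time-translates for pre-compactness. Most of this is correct and in the right order.

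There is, however, one genuine gap: you never arrange for the minimizing sequence to blow up in \emph{both} time directions. The paper's Lemma~\ref{starting sequence} does this by a time-shift inside each solution: from any solution whose total $Y^\star$-norm exceeds $2^{n+1/p_c}$ one picks a time $t_0$ splitting that norm so that both $\|u_n\|_{Y^\star((-T_-,0])} > 2^n$ and $\|u_n\|_{Y^\star([0,T_+))} > 2^n$. Your Step~1 selects data so that $\|u_n\|_{Y^\star([0,T_+))}=\infty$ ``after possibly reversing time,'' which only gives one direction. This matters twice. First, without two-sided non-scattering you cannot invoke Remark~\ref{negative time direction} to rule out $t_1 = \pm\infty$; if $t_1 = -\infty$ the surviving nonlinear profile $U_1$ scatters at $-\infty$ by construction, so the ``time translation by $t_1$'' you use to define the critical element is undefined, and the resulting $u$ would scatter backward in time. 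Second, the stated conclusion $\|u\|_{Y^\star([0,\infty))} = \|u\|_{Y^\star((-\infty,0])} = +\infty$ is a claim about both directions; your proof only delivers the forward one. (One \emph{can} recover two-sided blow-up a posteriori from pre-compactness plus dispersion of free waves, but your Step~3 does not make that argument either, and you do rely on finiteness of $t_1$ earlier.) A secondary point: your Step~2 asserts $\|(w_{0,n}^J,w_{1,n}^J)\|_H \to 0$ as an immediate consequence of there being a single profile, but the $Y^\star$-smallness of the remainder in Proposition~\ref{modified profile decomposition} does not control its $H$-norm; one needs the full Pythagorean expansion together with the energy-splitting of Lemmas~\ref{limit of phi V L6}, \ref{the energy of Uj} and the equality $E(U_1)=M_0$, exactly as the paper does in its almost-periodicity subsection.
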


\begin{remark}
The compactness procedure in the defocusing case is similar. We can substitute the statement SC($\phi$, M) and Proposition \ref{compactness process} by the statement SC'($\phi$, M) and Proposition \ref{compactness process defocusing} as below.
\end{remark}

\begin{statement}[SC'($\phi$, M)]
 There exists a function $\beta: [0,M) \rightarrow \Rm^+$, such that if the initial data $(u_0,u_1) \in \dot{H}^{1} \times L^2 (\Rm^d)$ satisfy
 $E_\phi (u_0,u_1) < M$, then the corresponding solution $u$ to (CP1) in the defocusing case exists globally in time, scatters in both two time directions with 
 \[
  \|u\|_{Y(\Rm)} < \beta (E_\phi (u_0,u_1)).
 \]
\end{statement}

\begin{proposition} \label{compactness process defocusing}
Assume that $\phi$ satisfies the condition \eqref{basic condition of phi}. If the statement SC'($\phi$, M) breaks down at $M_0$, i.e. the statement holds for all $M \leq M_0$ but fails for any $M > M_0$, then there exists a critical element $u$, which is a solution to (CP1) in the defocusing case with initial data $(u_0,u_1)$ such that
 \begin{itemize}
  \item The energy $E_\phi (u_0,u_1) = M_0$; 
  \item The solution exists globally in time with $\|u\|_{Y^\star ([0,\infty))} = \|u\|_{Y^\star ((-\infty,0])} = + \infty$. 
  \item The set $\{(u(t), \partial_t u(t))| t\in \Rm\}$ is pre-compact in $\dot{H}^1 \times L^2 (\Rm^d)$. 
 \end{itemize}
\end{proposition}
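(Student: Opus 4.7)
The proof will closely parallel that of Proposition \ref{compactness process} for the focusing case, and is in fact noticeably simpler because the defocusing energy
\[
E_\phi(u_0,u_1) = \int_{\Rm^d}\left(\tfrac{1}{2}|\nabla u_0|^2 + \tfrac{1}{2}|u_1|^2 + \tfrac{1}{2^\ast}\phi|u_0|^{2^\ast}\right) dx
\]
is a sum of three manifestly nonnegative quantities, so no analogue of the energy-trapping argument of Proposition \ref{energy trapping} is required. First I would negate the conclusion: choose a sequence of initial data $(u_{0,n},u_{1,n})$ with $E_\phi(u_{0,n},u_{1,n})\to M_0^+$ whose solutions $u_n$ fail to scatter, say $\|u_n\|_{Y^\star([0,T_+(u_{0,n},u_{1,n})))}\to +\infty$ (WLOG in the forward direction). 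Applying the modified profile decomposition (Proposition \ref{modified profile decomposition}) and assigning nonlinear profiles $U_j$ as in Section~3.2, I note that profiles with $\lambda_j=1$ solve (CP1), while profiles with $\lambda_j=0$ solve the defocusing constant-coefficient equation $\partial_t^2 U-\Delta U = -\phi(x_j)|U|^{p_c-1}U$, which by the Grillakis/Shatah-Struwe result (together with the rescaling used in Corollary \ref{critical focusing phi equal c}) scatters globally for any $\dot H^1\times L^2$ data.

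The key step is to extract a non-scattering profile. Suppose every $U_j$ has finite Strichartz norm on its maximal lifespan. I would then build the approximate solution
\[
\tilde u_n^J(x,t) = \sum_{j=1}^J U_{j,n}(x,t) + S_L(t)(w_{0,n}^J,w_{1,n}^J),
\]
use Lemma \ref{uniform estimate in J} to bound $\|\tilde u_n^J\|_{Y^\star}$ uniformly in $n$ and $J$, invoke Lemma \ref{approximation solution 1} together with the orthogonality Lemma \ref{general almost orthogonality} to show that the error $(\partial_t^2-\Delta)\tilde u_n^J-\phi F(\tilde u_n^J)$ vanishes in $L^1L^2$ as $n\to\infty$, and use Proposition \ref{modified profile decomposition}(III) to make the remainder's contribution small in $Y^\star$. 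The long-time perturbation theory (Theorem \ref{perturbation theory in Y star}) would then produce a uniform bound on $\|u_n\|_{Y^\star}$, contradicting the choice of $u_n$. Hence some profile, say $U_1$, has infinite Strichartz norm; since constant-coefficient profiles always scatter, this forces $\lambda_1=1$ and, by the minimality of $M_0$, $E(U_1)\geq M_0$.

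Next, I would combine Corollary \ref{sum of energy} with the fact that every $E(U_j)\geq 0$ in the defocusing setting, with equality if and only if the initial data of $U_j$ vanishes. This forces $E(U_1)=M_0$ and $V_j\equiv 0$ for every $j\geq 2$, so a single profile survives. The corresponding accounting for the $L^{2^\ast}$-part in Proposition \ref{modified profile decomposition}(IV) and the $\dot H^1\times L^2$-part in Proposition \ref{modified profile decomposition}(III) then forces $\|(w_{0,n}^1,w_{1,n}^1)\|_H\to 0$. Thus $(u_{0,n},u_{1,n})$ is asymptotically the rescaled profile $(V_{1,n}(\cdot,0),\partial_t V_{1,n}(\cdot,0))$, and I would read off the critical element as $u=U_1$ (time-shifted to have its initial data at $t=0$ when $t_1$ is finite, and defined via scattering asymptotics when $t_1=\pm\infty$).

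Finally, for the pre-compactness of $\{(u(\cdot,t),\partial_t u(\cdot,t))\mid t\in\Rm\}$, I would feed any sequence $(u(\cdot,s_n),\partial_t u(\cdot,s_n))$ back into the compactness procedure: by energy conservation and the invariance of non-scattering under time translation, this is again a minimizing sequence at energy $M_0$, and the argument above yields a single profile carrying all the energy with vanishing remainder, which is exactly $H$-convergence along a subsequence. Pre-compactness then rules out finite-time blow-up by the standard argument (concentration of $u$ at $T_+$ would contradict compactness of the orbit together with Remark \ref{lifespan lower bound for compact set}), so $u$ is global. The main obstacle I anticipate is the bookkeeping in the extraction step, namely verifying uniqueness of the surviving profile and the vanishing of the remainder purely from the energy; unlike in Proposition \ref{compactness process}, no trapping argument is needed because the positivity of each summand of $E_\phi$ handles this directly.
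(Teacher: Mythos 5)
Your proposal follows the same overall strategy the paper adopts (the paper handles the defocusing case purely by remark, pointing to Proposition \ref{compactness process}), and the simplifications you identify are genuine: since each of the three terms of $E_\phi$ is nonnegative in the defocusing case, you get $\|(u_{0,n},u_{1,n})\|_H^2\leq 2E_\phi(u_{0,n},u_{1,n})$ directly, there is no need for the energy trapping of Proposition \ref{energy trapping}, every $E(U_j)\geq 0$ with equality iff $V_j=0$ is immediate from Lemma \ref{the energy of Uj}, and all concentrating ($\lambda_j=0$) profiles automatically scatter by Grillakis/Shatah--Struwe plus the rescaling of Corollary \ref{critical focusing phi equal c}. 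The Pythagorean computation you sketch (combining $E(U_1)=M_0$, the $L^{2^\ast}$ accounting in Proposition \ref{modified profile decomposition}(IV), and Theorem \ref{profile decomposition}(III)) does indeed force $\|(w_{0,n}^1,w_{1,n}^1)\|_H\to 0$. The re-insertion of time slices to establish almost periodicity is also the paper's approach.

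There is one genuine gap: the parenthetical ``WLOG in the forward direction'' is not a legitimate reduction. Lemma \ref{starting sequence} is not a convenience but a necessity: one must first choose $v_n$ with $\|v_n\|_{Y^\star((-T_-,T_+))}>2^{n+1/p_c}$ and then bisect at a suitable $t_0$ so that \emph{both} $\|u_n\|_{Y^\star((-T_-,0])}>2^n$ and $\|u_n\|_{Y^\star([0,T_+))}>2^n$, where $u_n$ is $v_n$ re-based at $t_0$. If you only arrange forward blow-up, the extracted profile $U_1$ could scatter in the negative time direction, in which case $t_1=-\infty$ is not excluded, the asserted conclusion $\|u\|_{Y^\star((-\infty,0])}=+\infty$ fails, and more seriously the almost periodicity argument degenerates: when you feed the slices $(u(\cdot,s_n),\partial_t u(\cdot,s_n))$ with $s_n\to +\infty$ back into the machine, you need both $Y^\star$ tails of the shifted solution to be infinite to repeat the extraction, and that is exactly what backward scattering would destroy. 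Once you insert Lemma \ref{starting sequence} and the time-reversibility Remark \ref{negative time direction}, the rest of your outline is sound.
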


\subsection{Set-up of the Proof} \label{sec: beginning of compactness}
If the statement SC($\phi$, M) breaks down at $M_0 < E_1 (W,0)$, then the statement $SC(\phi, M_0 +2^{-n})$ is not true for each positive integer $n$. Fix a positive integer $N_0$ so that $M_0 + 2^{-n} < E_1(W,0)$ for each $n \geq N_0$.  Under these assumptions we have 
\begin{lemma} \label{starting sequence}
We can find a sequence of solutions $\{u_n\}_{n \geq N_0}$ with initial data $(u_{0,n}, u_{1,n}) \in \dot{H}^1 \times L^2 (\Rm^d)$, such that 
\begin{itemize}
 \item[(I)] $\|\nabla u_{0,n}\|_{L^2} < \|\nabla W\|_{L^2}$ and $E_\phi (u_{0,n}, u_{1,n}) < M_0 + 2^{-n}$;
 \item[(II)] Let $(-T_-(u_{0,n}, u_{1,n}), T_+ (u_{0,n}, u_{1,n}))$ be the maximal lifespan of $u_n$. We have 
 \begin{align*}
  &\|u_n\|_{Y^\star ((-T_- (u_{0,n},u_{1,n}), 0])} > 2^n;& &\|u_n\|_{Y^\star ([0,T_+(u_{0,n}, u_{1,n})))} > 2^n.&
 \end{align*}
\end{itemize} 
\end{lemma}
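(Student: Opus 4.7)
}

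The plan is to first produce, for each $n\geq N_0$, a single solution with $E_\phi$ close to $M_0$ and extremely large $Y^\star$-norm on its maximal interval, and then use a time-translation to split the failure evenly between the past and the future.

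First I would unpack the failure hypothesis. Since the statement SC($\phi$, $M_0+2^{-n}$) is false, there cannot exist any function $\beta:[0,M_0+2^{-n})\to\Rm^+$ controlling $\|u\|_{Y^\star}$ for all admissible initial data. This forces the existence, for each fixed $n\geq N_0$, of a sequence of solutions $\{u_n^{(k)}\}_{k\in{\mathbb Z}^+}$ with initial data $(u_{0,n}^{(k)},u_{1,n}^{(k)})$ satisfying $\|\nabla u_{0,n}^{(k)}\|_{L^2}<\|\nabla W\|_{L^2}$, $E_\phi(u_{0,n}^{(k)},u_{1,n}^{(k)})<M_0+2^{-n}$, and (writing $I_n^{(k)}$ for its maximal interval of existence)
\[
\|u_n^{(k)}\|_{Y^\star(I_n^{(k)})}\longrightarrow\infty\quad\text{as }k\to\infty.
\]
A standard diagonal extraction then produces, for each $n$, a single solution $\hat u_n$ with $\|\hat u_n\|_{Y^\star(\hat I_n)}>2\cdot 2^{n}$.

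Next I would perform a time-splitting using the fact that the $Y^\star$-norm is $L^{p_c}$ in time, so the functional
\[
 t\longmapsto \|\hat u_n\|_{Y^\star((-T_-(\hat u_n),\,t])}
\]
is continuous, nondecreasing, vanishes as $t\to -T_-(\hat u_n)^+$, and tends to $\|\hat u_n\|_{Y^\star(\hat I_n)}>2\cdot 2^n$. By the intermediate value theorem I can choose $t_n\in\hat I_n$ so that both $\|\hat u_n\|_{Y^\star((-T_-,t_n])}$ and $\|\hat u_n\|_{Y^\star([t_n,T_+))}$ exceed $2^n$. Then I translate time and define $u_n(x,t)\doteq\hat u_n(x,t+t_n)$; this is again a solution of (CP1), with the same energy (by conservation \eqref{energy of CP1}) and with both one-sided $Y^\star$-norms on its new maximal lifespan exceeding $2^n$, which is precisely condition (II).

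Finally I would verify that the translated initial data still satisfies the gradient bound in (I). At the original time $t=0$ we have $\|\nabla u_{0,n}^{(k)}\|_{L^2}<\|\nabla W\|_{L^2}$ and $E_\phi<M_0+2^{-n}<E_1(W,0)$ thanks to the choice of $N_0$, so Proposition \ref{energy trapping} gives $\|\nabla\hat u_n(\cdot,t)\|_{L^2}<\|\nabla W\|_{L^2}$ throughout $\hat I_n$; in particular this holds at $t=t_n$, which is the new $t=0$ for $u_n$. I do not anticipate a serious obstacle here: the only point requiring care is to make sure that in the original failure-extraction step the $Y^\star$-norm really blows up along the maximal interval (rather than the energy or $\dot H^1$ norm misbehaving), and the energy-trapping bound together with the finite-blow-up criterion ensures that $\|u_n\|_{Y^\star(I_n)}=\infty$ is the only obstruction to SC.
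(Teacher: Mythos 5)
Your argument is correct and follows essentially the same route as the paper's proof: extract, from the failure of SC($\phi$, $M_0+2^{-n}$), one solution with huge $Y^\star$ norm over its maximal interval (using Remark \ref{Y Ystar equivalent} and Remark \ref{positive energy} to pass between the $Y$ and $Y^\star$ bounds), split the norm by the intermediate value theorem thanks to the $L^{p_c}$-in-time structure, then time-translate and invoke energy conservation together with Proposition \ref{energy trapping} to preserve condition (I). The only cosmetic difference is that no diagonal extraction is needed (for each fixed $n$ one simply picks a single admissible solution whose $Y^\star$ norm exceeds the desired threshold), which is exactly what the paper does.
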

\begin{proof}
Given $n \geq N_0$, let us first show there exists a solution $v_n$ with initial data $(v_{0,n},v_{1,n})$ and maximal lifespan $(-T_-,T_+)$, so that part (I) of the conclusion above holds and 
\[
 \|v_n\|_{Y^\star ((-T_-,T_+))} > 2^{n+\frac{1}{p_c}}.
\] 
If this were false, then the statement $SC(M_0+ 2^{-n})$ would be true, because we could find a universal upper bound for $Y$ norm as well, according to Remark \ref{Y Ystar equivalent} and Remark \ref{positive energy}. Next we can pick a time $t_0 \in (-T_-,T_+)$, so that $\|v_n\|_{Y^\star ([t_0,T_+))} > 2^n$ and $\|v_n\|_{Y^\star ((-T_-,t_0])} > 2^n$. Finally we finish the proof by choosing 
\begin{align*}
 &(u_{0,n},u_{1,n}) = (v_n(\cdot, t_0), \partial_t v_n (\cdot,t_0));& &u_n(\cdot,t) = v_n (\cdot,t+t_0).&
\end{align*}
Note that the conservation law of energy and Lemma \ref{energy trapping} guarantee the new initial data $(u_{0,n},u_{1,n})$ still satisfy (I).
\end{proof}

\paragraph{Application of the profile decomposition} Let us consider the solutions $u_n$ and initial data $(u_{0,n}, u_{1,n})$ given above. According to Lemma \ref{energy trapping}, there exists a constant $0 <\bar{\delta} < 1$, such that
\[
 \|(u_{0,n},u_{1,n})\|_{\dot{H}^1 \times L^2} < (1 - \bar{\delta}) \|\nabla W\|_{L^2} 
\]
holds for large $n$. Thus we are able to apply the linear profile decomposition (Proposition \ref{modified profile decomposition}) on the sequence $\{(u_{0,n}, u_{1,n})\}_{n \in {\mathbb Z}^+}$, then assign a nonlinear profile $U_j$ to each linear profile $V_j$ as we did in the previous section. Finally we obtain the decomposition ($J \in {\mathbb Z}^+$)
\begin{align}
 (u_{0,n}, u_{1,n}) = & \sum_{j=1}^J (V_{j,n}(\cdot, 0), \partial_t V_{j,n}(\cdot, 0)) + (w_{0,n}^J, w_{1,n}^J)\nonumber \\
 = & (S_{J,n}(\cdot,0), \partial_t S_{J,n}(\cdot, 0)) + (\tilde{w}_{0,n}^J, \tilde{w}_{1,n}^J) 
\end{align}
Here $\{(u_{0,n}, u_{1,n})\}_{n \in {\mathbb Z}^+}$ is actually a subsequence of the original sequence of initial data, although we still use the same notation. One can check that the conclusion of Lemma \ref{starting sequence} still holds for this subsequence (along with the corresponding solutions $u_n$). The notations $S_{J,n}$ and $(\tilde{w}_{0,n}^J, \tilde{w}_{1,n}^J)$ represent 
\begin{align*}
 S_{J,n} &= \sum_{j=1}^J U_{j,n};\\
 (\tilde{w}_{0,n}^J, \tilde{w}_{1,n}^J) &= (w_{0,n}^J, w_{1,n}^J) + \sum_{j=1}^J \left(V_{j,n}(\cdot, 0) - U_{j,n}(\cdot, 0), \partial_t V_{j,n}(\cdot, 0) - \partial_t U_{j,n}(\cdot, 0)\right).
\end{align*}
By our definition of nonlinear profiles, we have ($j \in \mathbb{Z}^+$)
\[
 \lim_{n \rightarrow \infty} \left\|\left(V_{j,n}(\cdot, 0) - U_{j,n}(\cdot, 0), \partial_t V_{j,n}(\cdot, 0) - \partial_t U_{j,n}(\cdot, 0)\right)\right\|_{\dot{H}^1 \times L^2} = 0.
\]
Thus we still have 
\begin{align} \label{convergence of tilde w}
 &\limsup_{n \rightarrow \infty} \left\|S_L (t)(\tilde{w}_{0,n}^J, \tilde{w}_{1,n}^J)\right\|_{Y^\star (\Rm)} \rightarrow 0,\qquad \hbox{as}\;\; J \rightarrow 0;&\\
 &\limsup_{n \rightarrow \infty} \|(\tilde{w}_{0,n}^J, \tilde{w}_{1,n}^J)\|_H < 2 \|\nabla W\|_{L^2}.&
\end{align}
In addition, part (IV) of the conclusion in the profile decomposition gives
\begin{equation} \label{sum of E0}
 \sum_{j=1}^\infty \|V_j\|_H^2 \leq \liminf_{n \rightarrow \infty} \|(u_{0,n}, u_{1,n})\|_H^2 \leq (1 - \bar{\delta})^2 \|\nabla W\|_{L^2}^2.
\end{equation} 
By the definition of nonlinear profiles, we also have 
 \begin{equation} \label{UV are close in H}
 \lim_{t \rightarrow t_j} \|(U_j(\cdot, t), \partial_t U_j (\cdot, t))\|_H = \|V_j\|_H.
 \end{equation}
Combining \eqref{sum of E0} and \eqref{UV are close in H}, we obtain
\begin{itemize}
 \item For each given $j$, we have
 \begin{equation}
 \lim_{t \rightarrow t_j} \|(U_j(\cdot, t), \partial_t U_j (\cdot, t))\|_H  \leq (1 -\bar{\delta}) \|\nabla W\|_{L^2} \label{H upper bound on Uj}
 \end{equation}
According to Remark \ref{positive energy}, we have $E(U_j) > 0$ unless $U_j$ is identically zero. By Corollary \ref{sum of energy} we also have $\displaystyle \sum_{j=1}^\infty E(U_j) \leq M_0$. 
\item As $j \rightarrow \infty$, we have
\[
   \lim_{t \rightarrow t_j} \|(U_j(\cdot, t), \partial_t U_j (\cdot, t))\|_H = \|V_j\|_H \rightarrow 0.
\]
Thus we know $U_j$ is globally defined in time and scatters with $\|U_j\|_{Y(\Rm)} \lesssim \|V_j\|_H$ for each sufficiently large $j > J_0$. Combining this upper bound on $\|U_j\|_{Y(\Rm)}$ and the inequality \eqref{sum of E0}, we have
\begin{equation} \label{Y norm of sum S}
 \sum_{j = J_0+1}^\infty \|U_j\|_{Y(\Rm)}^{p_c} < \infty.
\end{equation}
\end{itemize}

\subsection{The Extraction of a Critical Element}

In this subsection, we show there is exactly one nonzero profile in the profile decomposition, whose corresponding nonlinear profile is exactly the critical element we are looking for. We start by proving 
\begin{lemma} \label{positive time direction 1}
 If each nonlinear profile $U_j$ we obtained in the previous subsection scatters in the positive time direction, then $u_n$ scatters in the positive time direction for sufficiently large $n > N_0$ and 
 \[
  \sup_{n > N_0} \|u_n\|_{Y^\star ([0,\infty))} < \infty.
 \]
\end{lemma}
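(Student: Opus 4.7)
The plan is to approximate $u_n$ by $S_{J,n} = \sum_{j=1}^{J} U_{j,n}$ and then invoke the long-time perturbation theory (Theorem \ref{perturbation theory in Y star}) to transfer bounds from the approximate solutions to the actual solutions $u_n$. I would carry this out in three steps.

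First, I would establish a uniform bound $\limsup_{n\to\infty}\|S_{J,n}\|_{Y^\star([0,\infty))}\leq M$ with $M$ independent of $J$. For each $j$, the hypothesis that $U_j$ scatters forward lets me choose an interval $I'_j$ with $\|U_j\|_{Y(I'_j)}<\infty$ and such that $[0,\infty)\subseteq t_{j,n}+\lambda_{j,n}I'_j$ for all sufficiently large $n$: concretely take $I'_j=[t_j-1,+\infty)\cap I_j^{\max}$ when $t_j$ is finite, and $I'_j=(M_j,+\infty)$ contained in the lifespan when $t_j=+\infty$. In both cases the condition $-t_{j,n}/\lambda_{j,n}\in I'_j$ for large $n$ is guaranteed by $-t_{j,n}/\lambda_{j,n}\to t_j$. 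By \eqref{Y norm of sum S}, the tail $\sum_{j>J_0}\|U_j\|_{Y(I'_j)}^{p_c}\leq \sum_{j>J_0}\|U_j\|_{Y(\Rm)}^{p_c}$ converges, and the finitely many leading profiles contribute finite terms, so $\sum_{j=1}^{\infty}\|U_j\|_{Y(I'_j)}^{p_c}=:M^{p_c}<\infty$. Applying Lemma \ref{uniform estimate in J} with $J_n=[0,\infty)$ gives $\limsup_{n\to\infty}\|S_{J,n}\|_{Y([0,\infty))}\leq M$ uniformly in $J$, and since $0\leq\phi\leq 1$ the same bound controls $\|S_{J,n}\|_{Y^\star([0,\infty))}$.

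Second, I would show that the error
\[
e_{J,n} \;:=\; (\partial_t^2-\Delta)S_{J,n} - \phi F(S_{J,n}) \;=\; \sum_{j=1}^{J}\bigl[(\partial_t^2-\Delta)U_{j,n}-\phi F(U_{j,n})\bigr] \;+\; \Bigl[\sum_{j=1}^{J}\phi F(U_{j,n})-\phi F(S_{J,n})\Bigr]
\]
satisfies $\|e_{J,n}\|_{L^1 L^2([0,\infty)\times\Rm^d)}\to 0$ as $n\to\infty$ for each fixed $J$. Each summand in the first group tends to zero by Lemma \ref{approximation solution 1}, while the second bracket tends to zero by Lemma \ref{uniform estimate in J} (multiplying by $\phi$ with $|\phi|\leq 1$ is harmless).

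Third, I would invoke Theorem \ref{perturbation theory in Y star} with the parameter $M+1$, letting $\eps_0=\eps_0(M+1)$. By \eqref{convergence of tilde w} I can fix $J$ large enough that $\limsup_{n\to\infty}\|S_L(t)(\tilde{w}_{0,n}^J,\tilde{w}_{1,n}^J)\|_{Y^\star(\Rm)}<\eps_0/2$, and then fix $n$ large enough that $\|e_{J,n}\|_{L^1 L^2}<\eps_0/2$ and $\|S_{J,n}\|_{Y^\star([0,\infty))}\leq M+1$. The long-time perturbation theorem, applied with approximate solution $S_{J,n}$ and initial data $(u_{0,n},u_{1,n})$ differing from $(S_{J,n}(0),\partial_t S_{J,n}(0))$ by $(\tilde{w}_{0,n}^J,\tilde{w}_{1,n}^J)$, then produces a solution of (CP1) on $[0,\infty)$ coinciding with $u_n$ (so $T_+(u_{0,n},u_{1,n})=+\infty$) and yields
\[
\|u_n\|_{Y^\star([0,\infty))} \;\leq\; \|S_{J,n}\|_{Y^\star([0,\infty))} + C(M+1)\eps_0 \;\leq\; M+1+C(M+1)\eps_0
\]
for all such $n$, giving forward scattering and the uniform $Y^\star$ bound.

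The main obstacle is Step 1: orchestrating the choice of the intervals $I'_j$ so that their images $t_{j,n}+\lambda_{j,n}I'_j$ all contain $[0,\infty)$ for large $n$, while the sum $\sum_j\|U_j\|_{Y(I'_j)}^{p_c}$ remains finite. The tail of the series is summable thanks to the small-data scattering estimate $\|U_j\|_{Y(\Rm)}\lesssim\|V_j\|_H$ together with \eqref{sum of E0}, while the finitely many leading profiles require the case analysis on $t_j$ described above.
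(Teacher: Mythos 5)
Your proposal follows exactly the route the paper takes: approximate $u_n$ by $S_{J,n}=\sum_{j\le J}U_{j,n}$, control $\|S_{J,n}\|_{Y}$ uniformly in $J$ via Lemma \ref{uniform estimate in J} together with the summability from \eqref{Y norm of sum S}, drive the error to zero via Lemma \ref{approximation solution 1} and Lemma \ref{uniform estimate in J}, and close with the long-time perturbation theory after choosing $J$ first and then $n$ large. Two small slips in your construction of the intervals $I'_j$ are worth flagging. First, your case analysis treats only $t_j$ finite and $t_j=+\infty$; the case $t_j=-\infty$ must also be handled, and there one should take $I'_j=\Rm$: the limit $t_j=-\infty$ already forces $U_j$ to scatter backward, and the hypothesis gives forward scattering, so $\|U_j\|_{Y(\Rm)}<\infty$. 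Second, for $t_j$ finite your concrete choice $I'_j=[t_j-1,+\infty)\cap I_j^{\max}$ can fail: if $U_j$ blows up backward at a time $>t_j-1$, the intersection equals the full lifespan and the $Y$-norm on it is infinite. The correct choice (and the one used in the paper) is $I'_j=[t_j^-,\infty)$ for any $t_j^-<t_j$ chosen strictly inside the lifespan; forward scattering then guarantees $\|U_j\|_{Y(I'_j)}<\infty$. With these adjustments your argument matches the paper's proof.
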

\begin{proof}
 Let us consider the approximate solution $S_{J,n} = \sum_{j=1}^J U_{j,n}$ which satisfies the equation 
\[
 (\partial_t^2 - \Delta) S_{J,n} - \phi F(S_{J,n}) = e'_{J,n}, \qquad t \in [0,\infty)
\]
with the error term 
\[
 e'_{J,n} = \phi \left[\sum_{j=1}^J F(U_{j,n}) - F(S_{J,n})\right] + \sum_{j=1}^J \left[(\partial_t^2 - \Delta) U_{j,n} - \phi F(U_{j,n})\right]
\]
and the initial data $(n_{0,n}, u_{1,n}) = (S_{J,n}(\cdot,0), \partial_t S_{J,n}(\cdot, 0)) + (\tilde{w}_{0,n}^J, \tilde{w}_{1,n}^J)$. We use the notation $I_j$ for the maximal lifespan of $U_j$. By our assumption on scattering we can choose an interval $I'_j \subseteq I_j$ for each $j$ as below so that $\|U_j\|_{Y(I'_j)} < \infty$, 
\[
  I'_j = \left\{\begin{array}{ll} (-\infty,\infty) & \hbox{if}\; j > J_0\; \hbox{or}\; t_j = - \infty;\cr
  [t_j^-, \infty) & \hbox{if}\; j \leq J_0\; \hbox{and}\; t_j > -\infty, \; \hbox{here $t_j^- \in I_j$ is a time smaller than $t_j$}.
  \end{array}\right.
\]
One can check that $[0,\infty) \subseteq \lambda_{j,n} I'_j + t_{j,n}$ holds for all $j \in {\mathbb Z}^+$ as long as $n$ is sufficiently large. Thus we can apply Lemma \ref{approximation solution 1} as well as Lemma \ref{uniform estimate in J} and obtain for each $J$
\begin{align}
 & \lim_{n \rightarrow \infty} \|e'_{J,n}\|_{L^1 L^2 ([0,\infty) \times \Rm^d)} = 0; \label{upper bound for error compactness} \\
 & \limsup_{n \rightarrow \infty} \left\|S_{J,n}\right\|_{Y([0,\infty))} \leq \left(\sum_{j=1}^{J}\|U_j\|_{Y(I'_j)}^{p_c}\right)^{1/p_c} 
 \leq \left(\sum_{j=1}^{\infty}\|U_j\|_{Y(I'_j)}^{p_c}\right)^{1/p_c} < \infty. \label{upper bound for Y compactness}
\end{align}
Here we use our estimate \eqref{Y norm of sum S}. Let 
\[
 M_1 = \left(\sum_{j=1}^{\infty}\|U_j\|_{Y(I'_j)}^{p_c}\right)^{1/p_c} +1
\]
and $\eps_0 = \eps_0 (M_1)$ be the constant given in the long-time perturbation theory (Theorem \ref{perturbation theory in Y star}). Let us first fix a $J_1$ so that $\displaystyle \limsup_{n \rightarrow \infty} \|S_L (t) (\tilde{w}_{0,n}^{J_1}, \tilde{w}_{1,n}^{J_1})\|_{Y^\star (\Rm)} <  \eps_0/2$. Using this upper limit as well as \eqref{upper bound for error compactness} and \eqref{upper bound for Y compactness}, we can find a number $N_0 \in {\mathbb Z}^+$, such that if $n > N_0$, then 
\begin{align*}
  &\|e'_{J_1,n}\|_{L^1 L^2 ([0,\infty) \times \Rm^d)} < \eps_0/2;& & \|S_L (t) (\tilde{w}_{0,n}^{J_1}, \tilde{w}_{1,n}^{J_1})\|_{Y^\star (\Rm)} < \eps_0/2;&\\
  &\left\|S_{J_1,n}\right\|_{Y([0,\infty))} <  M_1 = \left(\sum_{j=1}^{\infty}\|U_j\|_{Y(I'_j)}^{p_c}\right)^{1/p_c} +1. &
\end{align*}
These estimates enable us to apply the long-time perturbation theory on the approximation solution $S_{J_1,n}$, initial data $(u_{0,n}, u_{1,n})$ and the time interval $[0,\infty)$, and then to conclude $u_n$ scatters in the positive time direction with 
\[
 \|u_n\|_{Y^\star ([0,\infty))} \leq \|S_{J_1,n}\|_{Y^\star ([0,\infty))}  + \|u_n - S_{J_1,n}\|_{Y^\star([0,\infty))} \leq M_1 + C(M_1) \eps_0 < \infty
\]
for each $n > N_0$. 
\end{proof}

\paragraph{Critical Element} Because we have assumed that $\|u_n\|_{Y^\star [0,T_+(u_{0,n}, u_{1,n}))} > 2^n$, Lemma \ref{positive time direction 1} implies that there is at least one nonlinear profile, say $U_{j_0}$, that fails to scatter in the positive time direction. In addition, the limit \eqref{H upper bound on Uj} implies $\|\nabla U_{j_0} (\cdot, t)\|_{L^2} < \|\nabla W\|_{L^2}$ when $t$ is close to $t_{j_0}$. According to our assumption that $SC(\phi, M)$ is true for any $M \leq M_0$ and Corollary \ref{critical focusing phi equal c}, we obtain
\[
 E(U_{j_0}) = \left\{\begin{array}{ll} \geq M_0, & \hbox{if}\; \lambda_{j_0} = 1;\\
 \geq E_1 (W,0) > M_0, & \hbox{if}\; \lambda_{j_0} = 0.
 \end{array}\right.
\]
Combining this with the already known facts that $\sum_{j=1}^\infty E(U_j) \leq M_0$ and $E(U_j) \geq 0$ (please see the bottom part of Subsection \ref{sec: beginning of compactness}), we obtain that 
\begin{itemize} 
 \item $U_{j_0}$ is a solution to (CP1) with an energy $E (U_{j_0}) = E_\phi (U_{j_0}, \partial_t U_{j_0}) = M_0$.
 \item $U_{j_0}$ is the only nonlinear profile with a positive energy;
 \item Any other profile $U_j$ is identically zero;
 \item By Lemma \ref{energy trapping}, the inequality $\|(U_{j_0} (\cdot, t), \partial_t U_{j_0} (\cdot, t))\|_H < \|\nabla W\|_{L^2}$ holds for all time $t$ in the maximal lifespan. 
\end{itemize}
\begin{remark} \label{negative time direction}
 A similar result as Lemma \ref{positive time direction 1} holds for the negative time direction, because the wave equation is time-invertible. This implies that $U_{j_0}$ fails to scatter in the negative time direction as well. A direct corollary follows that $t_{j_0}$ is finite. 
\end{remark}

\subsection{Almost Periodicity}

Let $u$ be the critical element $(U_{j_0})$ we obtained in the previous subsection and $I$ be its maximal lifespan. In this subsection we prove that 
\begin{proposition}
 The set $\{(u(\cdot, t), \partial_t u(\cdot,t))| t \in I\}$ is pre-compact in $\dot{H}^1 \times L^2 (\Rm^d)$.
\end{proposition}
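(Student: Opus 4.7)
The plan is sequential: given any sequence $\tau_n \in I$, extract from $(u(\cdot, \tau_n), \partial_t u(\cdot, \tau_n))$—call it $(u_{0,n}^\sharp, u_{1,n}^\sharp)$—a subsequence converging in $H$. Energy conservation combined with Lemma \ref{energy trapping} gives $E_\phi(u_{0,n}^\sharp, u_{1,n}^\sharp) = M_0$ and $\|(u_{0,n}^\sharp, u_{1,n}^\sharp)\|_H < \|\nabla W\|_{L^2}$ uniformly in $n$; and since $\|u\|_{Y^\star([\tau_n, T_+))} = +\infty$ and $\|u\|_{Y^\star((-T_-, \tau_n])} = +\infty$ (a finite $Y^\star$ norm on either half-line would, by the local theory and the blow-up criterion, contradict the non-scattering of $u$), the time-translates $u_n^\sharp(\cdot, t) := u(\cdot, t + \tau_n)$ each fail to scatter in either direction.

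I would then run the compactness machinery of this section again on the new sequence $\{(u_{0,n}^\sharp, u_{1,n}^\sharp)\}$: apply Proposition \ref{modified profile decomposition} to obtain linear profiles $\tilde V_j$, parameters $(\tilde\lambda_{j,n}, \tilde x_{j,n}, \tilde t_{j,n})$ converging to $(\tilde\lambda_j, \tilde x_j, \tilde t_j)$, and a remainder $(\tilde w_{0,n}^J, \tilde w_{1,n}^J)$; assign nonlinear profiles $\tilde U_j$ as in Section 3; and repeat verbatim the critical element extraction of the previous subsection. This yields exactly one non-vanishing profile, say $\tilde U_1$, with $E(\tilde U_1) = M_0$, and forces $\tilde\lambda_1 = 1$, $\tilde x_1 = 0$ (the concentration case $\tilde\lambda_1 = 0$ is excluded because Corollary \ref{critical focusing phi equal c} applied to $\partial_t^2 u - \Delta u = \phi(\tilde x_1) F(u)$ would give $E(\tilde U_1) \geq E_1(W,0) > M_0$, using $\|\nabla \tilde U_1(\cdot, \tilde t_1)\|_{L^2} \leq \|\tilde V_1\|_H < \|\nabla W\|_{L^2}$). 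The non-scattering of $\tilde U_1$ further rules out $\tilde t_1 = \pm\infty$, so $\tilde t_1 \in \Rm$. All other profiles $\tilde V_j$ vanish by Remark \ref{positive energy}, and consequently the remainder $(\tilde w_{0,n}^J, \tilde w_{1,n}^J)$ is independent of $J \geq 1$.

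The main quantitative step is to show $\|(\tilde w_{0,n}^1, \tilde w_{1,n}^1)\|_H \to 0$. Lemma \ref{limit of phi V L6} with parameters $(\tilde\lambda_1, \tilde x_1, \tilde t_1) = (1, 0, \tilde t_1)$ gives $\lim_n \int \phi |\tilde V_{1,n}(\cdot, 0)|^{2^\ast} dx = \int \phi |\tilde V_1(\cdot, \tilde t_1)|^{2^\ast} dx$, while \eqref{sum of L6 limit} ensures the remainder contributes nothing to the weighted $L^{2^\ast}$ mass in the limit. Combining these with the $H$-norm identity $\|(u_{0,n}^\sharp, u_{1,n}^\sharp)\|_H^2 = \|\tilde V_1\|_H^2 + \|(\tilde w_{0,n}^1, \tilde w_{1,n}^1)\|_H^2 + o(n)$ and the formula of Lemma \ref{the energy of Uj} yields
\[
 M_0 = \lim_n E_\phi(u_{0,n}^\sharp, u_{1,n}^\sharp) = E(\tilde U_1) + \tfrac{1}{2}\lim_n \|(\tilde w_{0,n}^1, \tilde w_{1,n}^1)\|_H^2 = M_0 + \tfrac{1}{2}\lim_n \|(\tilde w_{0,n}^1, \tilde w_{1,n}^1)\|_H^2,
\]
which forces the $H$-norm of the remainder to vanish.

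To conclude, since $(\tilde\lambda_{1,n}, \tilde x_{1,n}, -\tilde t_{1,n}/\tilde\lambda_{1,n}) \to (1, 0, \tilde t_1) \in \Rm^+ \times \Rm^d \times \Rm$, continuity of $S_L(\cdot)$ and $\Di_{\lambda, x_0}$ on $H$ produces $(\tilde V_{1,n}(\cdot, 0), \partial_t \tilde V_{1,n}(\cdot, 0)) \to (\tilde V_1(\cdot, \tilde t_1), \partial_t \tilde V_1(\cdot, \tilde t_1))$ in $H$, and combining with the vanishing remainder gives the convergent subsequence. The principal obstacles are the orchestration of the energy identity—one must verify that profiles and remainder decouple both in $H$-norm (automatic from Proposition \ref{modified profile decomposition}) and in the weighted $L^{2^\ast}$ mass (via \eqref{sum of L6 limit})—and, equally delicate, the exclusion of the concentration scale $\tilde\lambda_1 = 0$, which is the one place where the assumption $\phi \leq 1$ is essential.
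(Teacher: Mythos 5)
Your proposal follows the paper's strategy closely: rerun the compactness procedure on the time-translates $(u(\cdot,\tau_n),\partial_t u(\cdot,\tau_n))$, extract a single nonzero profile with $\tilde\lambda_1=1$, $\tilde x_1=0$, $\tilde t_1\in\Rm$, show the remainder vanishes in $H$, and pass to strong convergence. The one place you diverge from the paper is in the mechanism for killing the remainder. The paper first replaces $\tilde V_{1,n}(\cdot,0)$ by its strong limit $(U_1(\cdot,t_1),\partial_t U_1(\cdot,t_1))$, then invokes Lemma \ref{weak convergence of w} to get weak convergence of the (adjusted) remainder to zero, kills the cross term $\langle (U_1,\partial_t U_1),(\tilde w_{0,k},\tilde w_{1,k})\rangle_H$, and combines this with the identity $\lim_k\|(u_{0,n_k},u_{1,n_k})\|_H^2=\|(U_1(\cdot,t_1),\partial_t U_1(\cdot,t_1))\|_H^2$ coming from equality of energies and equality of the weighted $L^{2^\ast}$-masses. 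You instead invoke the Pythagorean $H$-norm expansion directly, plug in the $L^{2^\ast}$-decoupling from \eqref{sum of L6 limit}, and solve. These are algebraically equivalent routes to $\lim_n\|(\tilde w_{0,n}^1,\tilde w_{1,n}^1)\|_H=0$: the weak convergence of the remainder is exactly what guarantees the Pythagorean identity, and vice versa. One small citation issue: the exact decoupling $\|(u_{0,n}^\sharp,u_{1,n}^\sharp)\|_H^2 = \|\tilde V_1\|_H^2 + \|(\tilde w_{0,n}^1,\tilde w_{1,n}^1)\|_H^2 + o(n)$ is stated in Theorem \ref{profile decomposition}(III); Proposition \ref{modified profile decomposition}(IV) only records a one-sided inequality, so you should cite the former and observe that the identity survives the absorption of the discarded profiles (those are asymptotically orthogonal, in $H$, to both the kept profile and the original remainder). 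With that caveat, the argument is correct and essentially the same as the paper's.
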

\begin{proof}
Given an arbitrary sequence of time $\{t_n\}_{n \in {\mathbb Z}^+}$ so that $t_n \in I$, we know $u_n = u(\cdot, t + t_n)$ is still a solution to (CP1) with initial data $(u_{0,n}, u_{1,n})=  (u(\cdot, t_n), \partial_t u(\cdot, t_n))$ at the time $t =0$. This sequence of solutions still satisfies the conclusion of Lemma \ref{starting sequence}. Thus we can repeat the process we followed in the previous subsections. Finally we can find a subsequence $\{u_{n_k}\}_{k \in {\mathbb Z}^+}$ with a single linear profile $V_1$, a single nonlinear profile $U_1$ and a sequence of triples $(\lambda_{1,k}, x_{1,k}, t_{1,k})$ such that 
\begin{itemize}
 \item[(a)] $(n_{0,n_k}, u_{1,n_k}) = (U_{1,k} (\cdot,0), \partial_t U_{1,k}(\cdot, 0)) + (\tilde{w}_{0,k}, \tilde{w}_{1,k})$;
 \item[(b)] $\displaystyle \limsup_{k \rightarrow \infty} \|S_L (t) (\tilde{w}_{0,k}, \tilde{w}_{1,k})\|_{Y^\star(\Rm)} = 0$;
 \item[(c)] $\lambda_{1,k} \rightarrow 1$, $x_{1,k} \rightarrow 0$ and $-t_{1,k}/\lambda_{1,k} \rightarrow t_1 \in \Rm$;
 \item[(d)] $E_\phi (U_1, \partial_t U_1)= M_0$ and $\displaystyle \limsup_{k \rightarrow \infty} \|(\tilde{w}_{0,k}, \tilde{w}_{1,k})\|_H < 2 \|\nabla W\|_{L^2}$;
 \item[(e)] By the fact $U_1(\cdot, t_1) = V_1(\cdot, t_1)$, Lemma \ref{limit of phi V L6} and part (IV) of Proposition \ref{modified profile decomposition}, we have 
 \[
  \int_{\Rm^d} \phi(x) |U_1 (x, t_1)|^{2^\ast} dx = \lim_{k \rightarrow \infty} \int_{\Rm^d} \phi |u_{0,n_k}(x)|^{2^\ast} dx.
 \]
 \end{itemize}
Now let us prove that $(u_{0,n_k}, u_{1,n_k})$ converges to $ (U_1 (\cdot,t_1), \partial_t U_1(\cdot, t_1))$ strongly in $H$, as $k \rightarrow \infty$. First of all, the condition (c) above implies 
that 
\begin{equation}\label{Strong convergence of U1k}
 (U_{1,k} (\cdot,0), \partial_t U_{1,k}(\cdot, 0)) \rightarrow (U_1 (\cdot,t_1), \partial_t U_1(\cdot, t_1)) \; \hbox{strongly in}\; H
\end{equation}
Thus we can substitute Condition (a) above by 
\begin{equation} \label{new con a}
 (n_{0,n_k}, u_{1,n_k}) = (U_1 (\cdot,t_1), \partial_t U_1(\cdot, t_1)) + (\tilde{w}_{0,k}, \tilde{w}_{1,k})
\end{equation}
Here the remainders $(\tilde{w}_{0,k}, \tilde{w}_{1,k})$ may be different from the original ones, but they still satisfy the same estimates in (b) and (d). According to Lemma \ref{weak convergence of w}, we know $(\tilde{w}_{0,k}, \tilde{w}_{1,k})$ converges to zero weakly in $H$. Thus
\begin{align}
 & \lim_{k \rightarrow \infty} \left\langle (U_1 (\cdot,t_1), \partial_t U_1(\cdot, t_1)), (\tilde{w}_{0,k}, \tilde{w}_{1,k}) \right\rangle_H =0. \label{almost orthogonality wU1}
\end{align}
In addition, using Condition (e) and the fact $E_\phi (U_1(\cdot, t_1), \partial_t U_1 (\cdot, t_1)) = M_0 = E_\phi (n_{0,n_k}, u_{1,n_k})$, we obtain
\begin{equation} \label{H norm equivalent Uu}
\lim_{k \rightarrow \infty} \|(n_{0,n_k}, u_{1,n_k})\|_H^2  = \|(U_1(\cdot, t_1), \partial_t U_1 (\cdot, t_1))\|_H^2.
\end{equation}
Finally we can combine \eqref{new con a}, \eqref{almost orthogonality wU1} and \eqref{H norm equivalent Uu} to conclude that $\displaystyle \lim_{k \rightarrow \infty} \|(\tilde{w}_{0,k}, \tilde{w}_{1,k})\|_H \rightarrow 0$, which is equivalent to the strong convergence of $(n_{0,n_k}, u_{1,n_k})$, namely the strong convergence of $(u(\cdot, t_n), \partial_t u(\cdot, t_n))$, in light of \eqref{new con a}.
\end{proof}

\subsection{Global existence in time}

According to Remark \ref{lifespan lower bound for compact set}, the almost periodic property guarantees the existence of a constant $T>0$, such that if $t$ is contained in the maximal lifespan $I$ of $u$, then $(t-T,t+T) \subseteq I$. This implies $I = \Rm$.  Collecting all the properties of the critical element $u$ we obtained earlier, we can conclude the proof of Proposition \ref{compactness process}.

\section{Rigidity}

In this section we show that the critical element obtained in the previous section can never exist, thus finish the proof of the scattering part of our main theorem. There are two cases.

\begin{itemize}
 \item In the defocusing case, A Morawetz-type estimate is sufficient to finish the job. 
 \item In the focusing case, we follow the same idea as Kenig and Merle used to eliminate the critical element for the equation (CP0) in the paper \cite{kenig}. 
\end{itemize}

\subsection{The Defocusing Case: A Morawetz-type Estimate}

In this subsection we introduce the following Morawetz-type estimate and use it to ``kill'' the critical element. 

\begin{proposition} [A  Morawetz-type Inequality] \label{Morawetz1}
Assume $\phi \in C^1 (\Rm^d)$ satisfies the condition \eqref{basic condition of phi} and 
\[
 \eta (x) \doteq \phi (x) - \frac{(d-2) x\cdot \nabla \phi (x)}{2(d-1)} > 0,\qquad x \in \Rm^d 
\]
Let $u$ be a solution to the Cauchy problem (CP1) in the defocusing case with initial data $(u_0,u_1) \in \dot{H}^1 \times L^2 (\Rm^d)$ and a maximal lifespan $(-T_-,T_+)$. Then we have 
\[
 \int_{-T_-}^{T_+} \int_{\Rm^d} \eta (x) \frac{|u|^{2^\ast}}{|x|} dx dt \leq \frac{2d}{d-1} E_\phi (u_0,u_1).
\]
\end{proposition}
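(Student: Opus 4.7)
The plan is to use the classical Morawetz multiplier $\partial_r u + \frac{d-1}{2|x|}\, u$, where $\partial_r = \frac{x}{|x|}\cdot\nabla$. First I would introduce
\[
 A(t) = \int_{\Rm^d} \partial_t u(x,t)\left(\partial_r u(x,t) + \frac{d-1}{2|x|} u(x,t)\right) dx,
\]
differentiate in time, and substitute $\partial_t^2 u = \Delta u - \phi|u|^{p_c-1}u$ (the defocusing case). The resulting computation then splits into three pieces handled by integration by parts.

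The term quadratic in $\partial_t u$ vanishes, since $\mathrm{div}(x/|x|)=(d-1)/|x|$ produces two exactly cancelling contributions. The term with $\Delta u$ reduces, after two integrations by parts, to
\[
 \int_{\Rm^d} \Delta u\left(\partial_r u + \tfrac{d-1}{2|x|} u\right) dx = -\int_{\Rm^d}\frac{|\nabla u|^2-|\partial_r u|^2}{|x|} dx - \frac{(d-1)(d-3)}{4}\int_{\Rm^d}\frac{u^2}{|x|^3} dx,
\]
which is non-positive for $3\leq d\leq 5$ and which I would simply discard. For the nonlinear piece, writing $|u|^{p_c-1}u\,\partial_r u = \frac{1}{2^\ast}\partial_r(|u|^{2^\ast})$ and integrating by parts transfers a divergence onto $\phi\,x/|x|$, producing three boundary-free terms that, after using $2^\ast = 2d/(d-2)$, combine to exactly $-\frac{d-1}{d}\int\frac{\eta(x)|u|^{2^\ast}}{|x|}\,dx$. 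This is precisely where the hypothesis \eqref{condition phi defocusing} enters as positivity of $\eta$. Altogether,
\[
 \frac{dA}{dt}\leq -\frac{d-1}{d}\int_{\Rm^d}\frac{\eta(x)|u(x,t)|^{2^\ast}}{|x|}\, dx.
\]

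To close the argument I would bound $|A(t)|$ uniformly by the energy. A further integration by parts yields the identity $\|\partial_r u + \tfrac{d-1}{2|x|} u\|_{L^2}^2 = \|\partial_r u\|_{L^2}^2 + \frac{(d-1)(3-d)}{4}\|u/|x|\|_{L^2}^2$, which is bounded by $\|\nabla u\|_{L^2}^2$ for $d\geq 3$. Cauchy--Schwarz then gives $|A(t)|\leq \|\partial_t u\|_{L^2}\|\nabla u\|_{L^2}\leq \frac{1}{2}\|(u(\cdot,t),\partial_t u(\cdot,t))\|_H^2 \leq E_\phi(u_0,u_1)$, the last step using that in the defocusing case the potential contribution to the energy is non-negative. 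Integrating the differential inequality over any compact subinterval $[a,b]\subset(-T_-,T_+)$ gives $\frac{d-1}{d}\int_a^b\!\int\frac{\eta|u|^{2^\ast}}{|x|}\,dx\,dt\leq A(a)-A(b)\leq 2E_\phi(u_0,u_1)$, and letting $a\to -T_-$, $b\to T_+$ by monotone convergence produces the claimed constant $\frac{2d}{d-1}$.

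The main obstacle I anticipate is the rigorous justification of these manipulations for solutions only in $\dot{H}^1\times L^2$, since the multiplier is singular at the origin and the boundary terms generated by integration by parts are only borderline integrable, controlled through Hardy's inequality. I would handle this in the standard way by replacing $|x|$ with a smooth, spatially cut-off version $a_{\varepsilon,R}(x)=\chi(x/R)\sqrt{|x|^2+\varepsilon^2}$, performing the entire computation for this approximate multiplier (where every step is manifestly legitimate), and then taking $\varepsilon\to 0$ first and $R\to\infty$ second. The cutoff error terms can be controlled uniformly on compact time intervals via finite speed of propagation and conservation of energy.
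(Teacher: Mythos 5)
Your proposal is correct and takes essentially the same route as the paper: the paper uses the multiplier $\nabla a \cdot \nabla u + \tfrac{1}{2}u\,\Delta a$ with $a(x)=|x|$, which is exactly your $\partial_r u + \tfrac{d-1}{2|x|}u$, discards the same two non-negative terms (the Hessian term and the $-\Delta\Delta a$ term), and likewise refers to \cite{benoit, shen2} for the smoothing and cut-off regularization you outline at the end. Your explicit computation of the constant $\tfrac{d-1}{d}$ and the Cauchy--Schwarz bound $|A(t)|\le E_\phi(u_0,u_1)$ match what the paper's sketched identity produces.
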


\paragraph{The idea of proof} The main idea is to choose a suitable function $a(x)$ and then apply the informal computation 
\begin{align*}
   & -\frac{d}{dt} \int_{\Rm^d} u_t  \left( \nabla a \cdot \nabla u + u \cdot \frac{\Delta a}{2} \right) dx \\
 = & \int_{\Rm^d} \left[\nabla u \cdot \Db^2 a \cdot (\nabla u)^T \right] dx
 - \frac{1}{4} \int_{\Rm^d} \left( |u|^2 \Delta \Delta a\right) dx
 + \int_{\Rm^d} \left(\frac{1}{d}\phi \Delta a - \frac{1}{2^\ast} \nabla \phi \cdot \nabla a \right) |u|^{2^\ast} dx,
\end{align*}
for a solution $u$ to (CP1) in the defocusing case. In order to obtain a Morawetz inequality, the same idea has been used in \cite{benoit} for the defocusing wave equation with a pure power-type nonlinearity and in \cite{shen2} for the defocusing shifted wave equation on the hyperbolic spaces. Here we choose the function $a(x) = |x| = r$, which satisfies
\begin{align*}
 &\nabla a = \frac{x}{r};& &\Delta a = \frac{d-1}{r};& &\Db^2 a \geq 0;& &\nabla \Delta a = - \frac{(d-1)x}{r^3};& &\Delta \Delta a \leq 0.&
\end{align*}
Since the original solution does not possess sufficient smoothness, we need to apply some smoothing and cut-off techniques. Please see \cite{benoit, shen2} for more details on this argument. 

\paragraph{Nonexistence of a critical element} Applying Proposition \ref{Morawetz1}, we obtain a global integral estimate 
\[
 \int_{-\infty}^{\infty} \int_{\Rm^d} \eta (x) \frac{|u|^{2^\ast}}{|x|} dx dt \leq \frac{2d}{d-1} E_\phi (u_0,u_1) < \infty
\]
for the critical element $u$. However, we know that this integral should have been infinite by the almost periodicity, as shown in the lemma below. This gives us a contradiction and finishes the proof in the defocusing case.   

\begin{lemma} \label{H1 lower bound distribution}
Assume that $\beta (x)$ is a positive measurable function defined on $\Rm^d$. Let $u$ be a critical element of (CP1), either in the focusing or defocusing case, as given in Proposition \ref{compactness process} or Proposition \ref{compactness process defocusing}. Then for any given $\tau > 0$, there exists a positive constant $\delta_1$, such that the following inequalities hold for any $t_0 \in \Rm$. 
\begin{align*}
 &\int_{t_0}^{t_0 + \tau} \!\!\int_{\Rm^d}|\nabla u(x,t)|^2 dx dt \geq \delta_1,& &\int_{t_0}^{t_0 + \tau} \!\!\int_{\Rm^d} \beta(x) |u|^{2^\ast} dx dt \geq \delta_1.& 
\end{align*}
\end{lemma}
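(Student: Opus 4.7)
I would argue by contradiction for each of the two inequalities. Suppose some inequality fails: then there exists a sequence $\{t_n\} \subseteq \Rm$ along which the integral in question tends to zero. Consider the translated solutions $u_n(x,t) = u(x, t+t_n)$, each a solution of (CP1) with initial data $(u_n(\cdot,0), \partial_t u_n(\cdot, 0)) = (u(\cdot, t_n), \partial_t u(\cdot, t_n))$. By the pre-compactness of the trajectory of $u$ in $H$ (Proposition \ref{compactness process} or Proposition \ref{compactness process defocusing}), we may pass to a subsequence so that this initial data converges strongly in $H$ to some limit $(v_0, v_1)$.

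The next step is to upgrade this to strong convergence of the solutions themselves on $[0,\tau]$. Let $\mathcal{K}$ denote the closure of $\{(u(\cdot, t), \partial_t u(\cdot, t)) : t \in \Rm\}$, which is compact in $H$. Remark \ref{lifespan lower bound for compact set}, together with the continuity of the solution map on a compact set of data, supplies a uniform $T_0 = T_0(\mathcal K) > 0$ and an $M_1 > 0$ such that every solution with initial data in $\mathcal K$ exists on $[0, T_0]$ with $Y^\star$-norm at most $M_1$. Iterating this along the trajectory (whose state at every time lies in $\mathcal K$) over a cover of $[0,\tau]$ of cardinality $\lceil \tau/T_0 \rceil$ yields a uniform bound $\|u_n\|_{Y^\star([0,\tau])} \leq M(\tau)$. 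Taking $u_n$ as the exact approximate solution (with zero error) and $(v_0, v_1)$ as the new initial data in the long-time perturbation theory (Theorem \ref{perturbation theory in Y star}), I conclude that the solution $v$ of (CP1) with initial data $(v_0, v_1)$ exists on $[0,\tau]$ and $u_n \to v$ in $C([0,\tau]; H)$; in particular, $u_n \to v$ in $C([0,\tau]; \dot H^1)$, and by Sobolev embedding also in $C([0,\tau]; L^{2^\ast})$.

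Passing to the limit in the two integrals then forces $v \equiv 0$ on $[0,\tau]$. For the first inequality, the strong $\dot H^1$ convergence gives $\int_0^\tau \|\nabla v(\cdot, t)\|_{L^2}^2 \, dt = \lim_n \int_{t_n}^{t_n+\tau} \|\nabla u\|_{L^2}^2 \, dt = 0$, hence $\nabla v \equiv 0$ a.e.\ in $t$, and so $v \equiv 0$ on $[0,\tau]$ by continuity in $\dot H^1$. For the second inequality, the strong $L^{2^\ast}$ convergence on $[0,\tau] \times \Rm^d$ yields a.e.\ convergence along a further subsequence, and Fatou's lemma delivers $\int_0^\tau \int_{\Rm^d} \beta(x) |v|^{2^\ast} \, dx \, dt \leq 0$; since $\beta > 0$ everywhere, again $v \equiv 0$. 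In either case $(v_0, v_1) = (v(\cdot, 0), \partial_t v(\cdot, 0)) = 0$.

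The contradiction comes from continuity of the energy on $H$: the potential term is continuous via $\dot H^1 \hookrightarrow L^{2^\ast}$ together with $0 \leq \phi \leq 1$, so $E_\phi(v_0, v_1) = \lim_n E_\phi(u(\cdot, t_n), \partial_t u(\cdot, t_n)) = M_0 > 0$ by energy conservation, contradicting $E_\phi(0, 0) = 0$. The only delicate point I anticipate is the second inequality, where $\beta$ is merely positive and measurable (and possibly unbounded or not integrable), which is why I rely on a.e.\ convergence together with Fatou rather than trying to pass to the limit directly in an $L^1$ integral; every other step is a standard use of the compactness--perturbation machinery already developed in the paper.
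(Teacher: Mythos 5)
Your proof is correct and follows the same overall contradiction scheme as the paper: extract a strongly convergent subsequence of the translated data via almost periodicity, propagate the convergence to the solutions via the long-time perturbation theory, conclude the limiting solution $v$ vanishes, and derive a contradiction. There are two tactical differences worth noting. First, the paper takes the limit solution $v$ as the reference in the perturbation theorem and works on a small interval $[0,\tau_1] \subseteq [0,\tau]$ inside $v$'s lifespan, which suffices because the contradiction integrals over $[t_n,t_n+\tau_1]$ are still $< 2^{-n}$; this sidesteps your extra step of establishing a uniform $Y^\star([0,\tau])$ bound for the $u_n$ by iterating Remark \ref{lifespan lower bound for compact set} over the compact trajectory. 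Your version works too (and does deliver the stronger statement that $v$ exists on all of $[0,\tau]$), but it is more machinery than needed. Second, for the terminal contradiction you use continuity of $E_\phi$ on $H$ together with $M_0>0$, while the paper instead notes that $\|(u(\cdot,t_n),u_t(\cdot,t_n))\|_H \to 0$ and invokes Proposition \ref{scattering with small data} to contradict the non-scattering of the critical element. Both contradictions are valid one-liners. Your Fatou-plus-a.e.-subsequence handling of the possibly unbounded $\beta$ is correct and actually spells out a point the paper leaves implicit.
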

\begin{proof}
If the lemma were false for some $\tau >0$, then there would exist a sequence of time $\{t_n\}_{n \in {\mathbb Z}^+}$ such that 
\begin{align} \label{contra assum 1}
 &\int_{t_n}^{t_n + \tau} \!\!\int_{\Rm^d} |\nabla u(x,t)|^2 dx dt < 2^{-n}& &\hbox{or}& &\int_{t_n}^{t_n + \tau} \!\!\int_{\Rm^d} \beta(x) |u(x,t)|^{2^\ast} dx dt < 2^{-n}.&
\end{align}
By the almost periodicity we know the sequence $\{(u(\cdot, t_n), u_t (\cdot, t_n))\}_{n \in {\mathbb Z}^+}$ converges to some pair $(v_0,v_1)$ strongly in the space $\dot{H}^1 \times L^2 (\Rm^d)$. Let $v$ be the corresponding solution to (CP1) with initial data $(v_0,v_1)$ and $\tau_1 \in (0,\tau]$ be a small time contained in the lifespan of $v$. Thus we have $\|v\|_{Y([0,\tau_1])} < \infty$. Applying the long-time perturbation theory on the solution $v$, the time interval $[0,\tau_1]$ and the initial data $(u(\cdot, t_n), u_t (\cdot, t_n))$, we obtain that 
\begin{align*}
& \lim_{n \rightarrow \infty} \sup_{t \in [0,\tau_1]} \|u(\cdot, t_n +t) - v(\cdot, t)\|_{\dot{H}^1 (\Rm^d)} = 0,&
& \lim_{n \rightarrow \infty} \|u(\cdot, \cdot + t_n) - v \|_{Y^\star ([0,\tau_1])} = 0.&
\end{align*}
Combining this with our assumption \eqref{contra assum 1}, we obtain 
\begin{align*}
 &\int_{0}^{ \tau_1} \int_{\Rm^d} |\nabla v(x,t)|^2 dx dt = 0& &\hbox{or}& &\int_{0}^{\tau_1} \!\!\int_{\Rm^d} \beta(x) |v(x,t)|^{2^\ast} dx dt = 0.&
\end{align*}
In either case we have $v(x,t) = 0$ for $t \in  [0,\tau_1]$. As a result we have $(v_0,v_1) = 0$, in other words, $\|(u(\cdot, t_n), u_t (\cdot, t_n))\|_{\dot{H}^1 \times L^2} \rightarrow 0$. Thanks to Proposition \ref{scattering with small data}, this immediately gives a contradiction.
\end{proof}

\subsection{The Focusing Case}

The idea is to show the derivative 
\[
 \frac{d}{dt} \left[\int_{\Rm^d}(x \cdot \nabla u) u_t \varphi_R dx + \frac{d}{2} \int_{\Rm^d} u u_t \varphi_R dx\right]
\]
has a negative upper bound but the integral itself is bounded, which gives a contradiction when we consider a long time interval. Here $\varphi_R$ is a cut-off function defined below and the parameter $R$ is to be determined. It is necessary to apply the cut-off techniques here since the functions $(x \cdot \nabla u) u_t$ and $u u_t$ may not be integrable in the whole space.  

\begin{definition}[Cut-off function] \label{definition of varphi}
Let us fix a radial, smooth, nonnegative cut-off function $\varphi: \Rm^d \rightarrow [0,1]$ satisfying 
\[
 \varphi(x) = \left\{\begin{array}{ll} 1, & \hbox{if}\; |x|< 1;\\
 0, & \hbox{if}\; |x|>2;
 \end{array} \right. 
\]
and define its rescaled version 
\[
 \varphi_R (x) = \varphi(x/R).
\]
\end{definition}

\begin{definition}
 If $R > 0$, then we define
\[
 \kappa (R)=  \sup_{t \in \Rm} \int_{|x| > R} \left(|u_t (x, t)|^2 + |\nabla u (x, t)|^2 + \frac{|u(x,t)|^2}{|x|^2} + |u(x,t)|^{2^\ast} \right) dx
\]
\end{definition}
\begin{lemma} Let $u$ be a critical element as in Proposition \ref{compactness process}. Then $\kappa (R)$ is bounded and converges to zero as $R \rightarrow \infty$. 
\end{lemma}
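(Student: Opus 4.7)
The plan is to control each of the four terms in $\kappa(R)$ separately by exploiting: (i) the uniform bound $\|(u(t),\partial_t u(t))\|_H < \|\nabla W\|_{L^2}$, (ii) the Sobolev embedding $\dot H^1 \hookrightarrow L^{2^\ast}$, (iii) Hardy's inequality $\bigl\||x|^{-1} f\bigr\|_{L^2} \lesssim \|\nabla f\|_{L^2}$, and (iv) the pre-compactness of the trajectory $K = \{(u(t),\partial_t u(t)):t\in\Rm\}$ in $H$.

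For boundedness, I would simply note that for every $t\in\Rm$,
\[
 \int_{\Rm^d} \Bigl(|\partial_t u|^2 + |\nabla u|^2 + \tfrac{|u|^2}{|x|^2} + |u|^{2^\ast}\Bigr)\,dx \;\lesssim\; \|(u(t),\partial_t u(t))\|_H^2 + \|(u(t),\partial_t u(t))\|_H^{2^\ast} ,
\]
by Hardy and Sobolev, and the right-hand side is uniformly bounded in $t$. Replacing the integration domain by $\{|x|>R\}$ only decreases the integral, so $\kappa(R) \leq C$ for all $R>0$.

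For the convergence $\kappa(R)\to 0$, fix $\eps>0$. By pre-compactness of $K$, I can find finitely many pairs $(v_0^i,v_1^i)\in H$, $i=1,\dots,N$, such that for every $t\in\Rm$ there exists an index $i=i(t)$ with $\|(u(t),\partial_t u(t))-(v_0^i,v_1^i)\|_H < \delta$, where $\delta$ will be chosen small depending on $\eps$. For each individual pair $(v_0^i,v_1^i)$ the four tail integrals with $u$ replaced by $v_0^i$ and $\partial_t u$ by $v_1^i$ are absolutely convergent (again by Hardy and Sobolev), so by monotone/dominated convergence there exists $R_i$ such that the corresponding tail is less than $\eps/2$. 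Taking $R_0 = \max_i R_i$, the triangle inequality for the $L^2$, $L^{2^\ast}$ and weighted $L^2(|x|^{-2}dx)$ norms (restricted to $\{|x|>R\}$), combined with Hardy and Sobolev applied to the difference $(u(t)-v_0^{i(t)}, \partial_t u(t)-v_1^{i(t)})$, yields for $R\ge R_0$
\[
 \int_{|x|>R}\!\Bigl(|\partial_t u|^2 + |\nabla u|^2 + \tfrac{|u|^2}{|x|^2} + |u|^{2^\ast}\Bigr)dx \;\leq\; C\bigl(\delta^2 + \delta^{2^\ast}\bigr) + \eps/2 ,
\]
uniformly in $t$. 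Choosing $\delta$ small enough that $C(\delta^2+\delta^{2^\ast}) < \eps/2$ gives $\kappa(R)<\eps$.

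The main obstacle, and the reason the four terms appear in a single definition, is getting uniformity in $t$: a pointwise-in-$t$ tail estimate is trivial, but without pre-compactness one cannot make the cutoff radius independent of $t$. The covering argument above is the standard way to promote the pointwise estimate to a uniform one. Technical care is needed for the Hardy term since it must be treated in $L^2(|x|^{-2}dx)$ (controlled by $\dot H^1$), and for the $L^{2^\ast}$ term where the exponent $2^\ast$ changes the power of $\delta$ in the error bound; both are absorbed by taking $\delta$ small enough.
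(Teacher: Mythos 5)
Your proof is correct and is essentially the paper's argument: the paper dismisses this lemma in one line as a direct corollary of the pre-compactness of the trajectory and the Hardy inequality, and your covering/finite-net argument (together with Sobolev for the $L^{2^\ast}$ term) is exactly the standard way to spell that out.
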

\begin{proof}
This is a direct corollary of the pre-compactness of $\{(u(\cdot, t), \partial_t u(\cdot, t))| t\in \Rm\}$ and the Hardy Inequality. 
\end{proof}
\begin{lemma}[Hardy Inequality] \label{xi2 L2}
Assume $f \in \dot{H}^1 (\Rm^d)$. We have 
\[
 \left(\int_{\Rm^d} \frac{|f(x)|^2}{|x|^2} dx\right)^{1/2} \lesssim \|f\|_{\dot{H}^1 (\Rm^d)}. 
\]
\end{lemma}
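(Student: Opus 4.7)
The plan is to establish the inequality first for the dense subclass $C_c^\infty(\Rm^d)$ of $\dot H^1(\Rm^d)$ via an integration-by-parts identity, and then extend by density. The key algebraic observation is that for $d \geq 3$ (which is our setting), the radial vector field $x/|x|^2$ has divergence $\nabla\cdot (x/|x|^2) = (d-2)/|x|^2$, which reproduces precisely the weight $|x|^{-2}$ we want to bound.

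Concretely, I would proceed as follows. First, take $f \in C_c^\infty(\Rm^d)$. Using the divergence identity above and integrating by parts (there is no boundary term because $f$ has compact support and $x/|x|^2$ is locally integrable in dimension $d\geq 3$), write
\[
(d-2) \int_{\Rm^d} \frac{|f(x)|^2}{|x|^2}\, dx = \int_{\Rm^d} |f|^2\, \nabla\cdot\!\left(\frac{x}{|x|^2}\right) dx = -2 \int_{\Rm^d} f\, \nabla f \cdot \frac{x}{|x|^2}\, dx.
\]
Then I would bound the right-hand side by Cauchy--Schwarz, using $|x \cdot \nabla f|/|x|^2 \leq |\nabla f|/|x|$, to obtain
\[
(d-2) \int_{\Rm^d} \frac{|f|^2}{|x|^2}\, dx \leq 2 \left(\int_{\Rm^d} \frac{|f|^2}{|x|^2}\, dx\right)^{1/2} \|\nabla f\|_{L^2(\Rm^d)}.
\]
Dividing through gives the desired inequality with explicit constant $2/(d-2)$ on the right-hand side.

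Finally, I would extend the inequality to all $f \in \dot H^1(\Rm^d)$ by density. Given $f \in \dot H^1$, pick a sequence $f_n \in C_c^\infty$ with $\nabla f_n \to \nabla f$ in $L^2$. By the inequality on smooth functions, $\{f_n\}$ is Cauchy in the weighted space $L^2(\Rm^d; |x|^{-2}\,dx)$, so it converges there to some $g$; after passing to a subsequence we may assume $f_n \to f$ almost everywhere (via, e.g., the Sobolev embedding $\dot H^1 \hookrightarrow L^{2^\ast}$), which forces $g = f$ a.e., and Fatou's lemma then transfers the inequality to $f$. The main obstacle is really only a bookkeeping one --- namely justifying the integration by parts and the density step carefully given that $f$ itself lives only in $\dot H^1$ rather than in $L^2$ --- but both are standard in dimension $d \geq 3$ and cause no genuine difficulty.
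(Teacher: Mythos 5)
The paper states this classical Hardy inequality without providing a proof, so there is no in-paper argument to compare against. Your proof is correct and is the standard one: the divergence identity $\nabla\cdot(x/|x|^2)=(d-2)/|x|^2$, integration by parts (justified in $d\geq 3$ by excising a small ball $B_\eps(0)$ and noting the boundary term is $O(\eps^{d-2})\to 0$), Cauchy--Schwarz, and a routine density argument via the Sobolev embedding $\dot H^1\hookrightarrow L^{2^\ast}$. This yields the sharp constant $2/(d-2)$, which is more than the stated $\lesssim$ requires.
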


\begin{lemma}[Calculation of Derivatives] \label{derivatives of combo}
Fix $R > 0$ and let $\varphi_R$ be the cut-off function as in Definition \ref{definition of varphi}. We have the following derivatives in $t$ 
\begin{align*}
 \frac{d}{dt} \left[\int_{\Rm^d}(x \cdot \nabla u) u_t \varphi_R dx \right]  = & - \frac{d}{2} \int_{\Rm^d} |u_t|^2 dx + \frac{d-2}{2} \int_{\Rm^d} \left(|\nabla u|^2 - \phi |u|^{2^\ast} \right) dx\\
 &\quad - \frac{1}{2^\ast} \int (\nabla \phi \cdot x) |u|^{2^\ast} \varphi_R\, dx + O(\kappa(R));\\
  \frac{d}{dt} \left[\int_{\Rm^d} \varphi_R u u_t dx \right] = & \int_{\Rm^d} |u_t|^2 dx - \int_{\Rm^d} \left(|\nabla u|^2 - \phi |u|^{2^\ast} \right) dx + O(\kappa(R)).
\end{align*}
Here $O(\kappa (R))$ represents an error term that can be dominated by a constant multiple of $\kappa (R)$. 
\end{lemma}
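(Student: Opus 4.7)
The plan is to differentiate in $t$ directly, use the equation $u_{tt}=\Delta u+\phi F(u)$ to eliminate the second time derivative, and then integrate by parts in space. All commutator terms coming from derivatives of the cut-off $\varphi_R$ are supported on the annulus $\{R\le|x|\le 2R\}$, where $|\nabla\varphi_R|\lesssim R^{-1}$ and $|x\cdot\nabla\varphi_R|\lesssim 1$; combined with Hardy's inequality (Lemma~\ref{xi2 L2}) these pieces fit inside $O(\kappa(R))$. For the same reason, after integration by parts I may replace $\varphi_R$ by $1$ in every remaining main integral at the cost of one more $O(\kappa(R))$.

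For the second formula I compute $\frac{d}{dt}\int\varphi_R u u_t\,dx = \int\varphi_R|u_t|^2\,dx + \int\varphi_R u(\Delta u+\phi|u|^{p_c-1}u)\,dx$, integrate by parts on the Laplacian term, and bound the boundary piece $\int u(\nabla\varphi_R\cdot\nabla u)\,dx$ via Cauchy--Schwarz and Hardy by a constant multiple of $\kappa(R)$. Removing $\varphi_R$ from the surviving integrals yields the claimed identity.

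For the first formula I split $\frac{d}{dt}\int(x\cdot\nabla u)u_t\varphi_R\,dx$ as $\int\varphi_R(x\cdot\nabla u_t)u_t\,dx+\int\varphi_R(x\cdot\nabla u)u_{tt}\,dx$. The kinetic piece equals $\frac12\int\varphi_R x\cdot\nabla(u_t^2)\,dx$, so one integration by parts via $\nabla\cdot(x\varphi_R)=d\varphi_R+x\cdot\nabla\varphi_R$ gives $-\frac{d}{2}\int|u_t|^2\,dx+O(\kappa(R))$. In the second term, the linear piece $\int\varphi_R(x\cdot\nabla u)\Delta u\,dx$ is handled by a Pohozaev-type integration by parts using $\nabla(x\cdot\nabla u)\cdot\nabla u=|\nabla u|^2+\tfrac12 x\cdot\nabla|\nabla u|^2$, producing $\frac{d-2}{2}\int|\nabla u|^2\,dx+O(\kappa(R))$. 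The nonlinear piece is rewritten as $\int\varphi_R(x\cdot\nabla u)\phi|u|^{p_c-1}u\,dx = \frac{1}{2^\ast}\int\varphi_R\phi\,x\cdot\nabla(|u|^{2^\ast})\,dx$, and one integration by parts combined with the arithmetic identity $d/2^\ast=(d-2)/2$ delivers $-\frac{d-2}{2}\int\phi|u|^{2^\ast}\,dx - \frac{1}{2^\ast}\int\varphi_R(x\cdot\nabla\phi)|u|^{2^\ast}\,dx+O(\kappa(R))$. Summing the three contributions reproduces the stated formula.

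The main obstacle is a regularity issue: the critical element $u$ only has $(u,u_t)\in C(\Rm;\dot H^1\times L^2)$, so $x\cdot\nabla u$, $\Delta u$, and the products appearing above do not lie in classical function spaces and the integrations by parts are not literally justified. To make the argument rigorous I would approximate $(u_0,u_1)$ by a sequence of smooth, compactly supported pairs, obtain classical solutions by local theory, carry out the identities for these smooth approximations where all manipulations are legitimate, and then pass to the limit using the long-time perturbation theorem (Theorem~\ref{perturbation theory in Y star}) on bounded time intervals; since $\varphi_R$ is compactly supported, each of the integrals defines a continuous functional on $\dot H^1\times L^2$ restricted to the bounded region $\{|x|\le 2R\}$, so the passage to the limit is valid.
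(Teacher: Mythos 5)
Your proposal is correct and follows essentially the same route as the paper: differentiate in $t$, substitute the equation for $u_{tt}$, perform Pohozaev-type integrations by parts, and absorb all boundary/commutator terms (which involve $\nabla\varphi_R$ supported in $\{R\le|x|\le 2R\}$ together with Hardy's inequality) into $O(\kappa(R))$. The only difference is that where the paper says ``by smoothing techniques'' you spell out the approximation-by-smooth-solutions argument via the perturbation theory, which is a reasonable elaboration of the same idea.
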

\begin{proof}
We can always work as though $u$ is a smooth solution to (CP1) by smoothing techniques. The idea is to apply integration by parts
\begin{align}
 \frac{d}{dt} \left[\int_{\Rm^d} (x \cdot \nabla u) u_t \varphi_R dx \right] = & \int_{\Rm^d} (x \cdot \nabla u_t) u_t \varphi_R dx + \int_{\Rm^d} (x \cdot \nabla u) u_{tt} \varphi_R dx\nonumber \\
 = & \frac{1}{2} \int_{\Rm^d} \varphi_R x \cdot \nabla (|u_t|^2) dx + \int_{\Rm^d} (x \cdot \nabla u) (\Delta u + \phi |u|^{4/(d-2)}u) \varphi_R dx\nonumber \\
 = & - \frac{d}{2} \int_{\Rm^d} \varphi_R |u_t|^2 dx - \frac{1}{2} \int_{\Rm^d} (\nabla \varphi_R\cdot x) |u_t|^2 dx\nonumber \\
 & \quad - \int_{\Rm^d} \nabla (\varphi_R x \cdot \nabla u)\cdot \nabla u dx + \frac{1}{2^\ast} \int_{\Rm^d} \varphi_R \phi x \cdot \nabla (|u|^{2^\ast}) dx\nonumber \\
 = & - \frac{d}{2} \int_{\Rm^d} |u_t|^2 dx + O(\kappa(R)) + I_1 + I_2 \label{derivative 101}
\end{align}
In the calculation below, we let $u_i$, $u_{ij}$ represent the derivatives $\displaystyle \frac{\partial u}{\partial x_i}$, $\displaystyle \frac{\partial^2 u}{\partial x_j \partial x_i}$, respectively. We have 
\begin{align*}
 I_1 = & - \int_{\Rm^d} \nabla (\varphi_R x \cdot \nabla u)\cdot \nabla u dx\\
 = & - \sum_{i,j=1}^d \int_{\Rm^d} \left(\frac{\partial \varphi_R}{\partial x_j} x_i u_i u_j + \varphi_R \delta_{ij} u_i u_j + \varphi_R x_i u_{ij} u_j\right) dx\\
 = & - \int_{\Rm^d} \varphi_R |\nabla u|^2 dx - \frac{1}{2} \int_{\Rm^d} \varphi_R x \cdot \nabla (|\nabla u|^2) dx  + O(\kappa(R))\\
 = & - \int_{\Rm^d} |\nabla u|^2 dx + \frac{d}{2} \int_{\Rm^d} \varphi_R |\nabla u|^2 dx + \frac{1}{2} \int_{\Rm^d} (x \cdot \nabla \varphi_R) |\nabla u|^2 dx + O(\kappa(R))\\
 = & \frac{d-2}{2} \int_{\Rm^d} |\nabla u|^2 dx + O(\kappa(R))
\end{align*}
\begin{align*}
 I_2 = & \frac{1}{2^\ast} \int_{\Rm^d} \varphi_R \phi x \cdot \nabla (|u|^{2^\ast}) dx\\
 = & - \frac{d}{2^\ast} \int_{\Rm^d} \varphi_R \phi |u|^{2^\ast} dx - \frac{1}{2^\ast} \int_{\Rm^d} (\nabla \phi \cdot x) \varphi_R |u|^{2^\ast} dx - \frac{1}{2^\ast} \int_{\Rm^d} (\nabla \varphi_R \cdot x) \phi |u|^{2^\ast} dx\\
 = & -\frac{d-2}{2} \int_{\Rm^d} \phi |u|^{2^\ast} dx - \frac{1}{2^\ast} \int_{\Rm^d} (\nabla \phi \cdot x) \varphi_R |u|^{2^\ast} dx + O(\kappa(R)).
\end{align*}
Plugging $I_1$, $I_2$ into \eqref{derivative 101}, we finish the calculation of the first derivative. The second derivative can be dealt with in the same manner:
\begin{align*}
 \frac{d}{dt} \left[\int_{\Rm^d} \varphi_R u u_t dx \right] = &\int_{\Rm^d} \varphi_R |u_t|^2 dx + \int_{\Rm^d} \varphi_R u u_{tt} dx\\
 = & \int_{\Rm^d} |u_t|^2 dx + \int_{\Rm^d} \varphi_R u (\Delta u + \phi |u|^{4/(d-2)}u)  dx + O(\kappa(R))\\
 = & \int_{\Rm^d} |u_t|^2 dx - \int_{\Rm^d} \varphi_R |\nabla u|^2 dx + \int_{\Rm^d} \varphi_R \phi |u|^{2^\ast}dx + O(\kappa(R))\\
 = & \int_{\Rm^d} |u_t|^2 dx - \int_{\Rm^d} \left( |\nabla u|^2 - \phi |u|^{2^\ast}\right) dx + O(\kappa(R)).
\end{align*}
\end{proof}

\paragraph{Nonexistence of a critical element} Now we can show that a critical element does not exist in the focusing case. Consider the function ($R>0$)
\[
 G_{R} (t) = \int_{\Rm^d}(x \cdot \nabla u (x,t)) u_t (x,t) \varphi_R dx + \frac{d}{2} \int_{\Rm^d} \varphi_R u(x,t) u_t(x,t) dx.
\]
Applying Lemma \ref{derivatives of combo}, we obtain 
\begin{align}
 G'_{R} (t) = &  - \int_{\Rm^d} \left(|\nabla u|^2 - \phi |u|^{2^\ast} \right) dx - \frac{1}{2^\ast} \int (\nabla \phi \cdot x) |u|^{2^\ast} \varphi_R\, dx + O(\kappa(R))\nonumber\\
 = & -  \int_{\Rm^d} \left(|\nabla u|^2 -  |u|^{2^\ast} \right) dx -  \int_{\Rm^d} (1-\phi) |u|^{2^\ast} \varphi_R dx - \frac{1}{2^\ast} \int (\nabla \phi \cdot x) |u|^{2^\ast} \varphi_R\, dx + O(\kappa(R))\nonumber\\
 \leq & - C(E) \int_{\Rm^d} |\nabla u|^2 dx - \frac{1}{2^\ast}  \int_{\Rm^d} [2^\ast (1-\phi) + (\nabla \phi \cdot x) ] |u|^{2^\ast} \varphi_R\, dx + O(\kappa(R)) \label{final derivative}
\end{align}
In the last step above, we apply Proposition \ref{energy trapping}. The positive constant $C(E)$ only depends on the energy $E = E_\phi (u, u_t)$, but not on $t$ or $R$. According to our assumption that $2^\ast (1 - \phi(x)) + (x\cdot \nabla \phi(x)) \geq 0$ holds for all $x \in \Rm^d$, the integrand of the second integral in \eqref{final derivative} is always nonnegative. Thus we have 
\[
 G'_{R} (t) \leq - C(E) \int_{\Rm^d} |\nabla u|^2 dx  + O(\kappa(R))
\]
for all $R > 0$ and $t \in \Rm$. Fix $\tau >0$ and let $\delta_1$ be the constant in Lemma \ref{H1 lower bound distribution}. Since $\displaystyle \lim_{R\rightarrow \infty} \kappa(R) = 0$, we can fix a large $R$ so that 
\[
 G'_{R} (t) \leq -  C(E)  \int_{\Rm^d} |\nabla u|^2 dx + \frac{C(E) \delta_1}{2\tau}, \qquad \hbox{for any}\; t \in \Rm.
\]
Integrating both sides from $t=0$ to $t = n\tau$ for an positive integer $n$ and applying Lemma \ref{H1 lower bound distribution}, we obtain 
\[
 G_R (n \tau) - G_R (0) \leq - C(E) \int_0^{n\tau}\!\! \int_{\Rm^d} |\nabla u|^2 dx dt + \frac{C(E) \delta_1}{2\tau} \cdot n\tau \leq - \frac{C(E)\delta_1 \cdot n}{2}.
\]
This is impossible as $n \rightarrow \infty$, because $|G_{R}(t)|$ has a uniform upper bound for all $t$ if we fix $R$. 
\begin{align*}
 |G_{R}(t)| \lesssim & R \int_{\Rm^d} \left(|\nabla u(x,t)|^2 + |u_t (x,t)|^2\right) dx +  \int_{|x|<2R} \left(|u(x,t)|^{2^\ast} + |u_t (x,t)|^2 + 1\right) dx\\
 \lesssim & R E + E + R^d.
\end{align*} 
Here we need to use Remark \ref{positive energy}.

\section{Finite Time Blow-up}

In this section, we prove the second part of my main theorem. Namely, if $\|\nabla u_0\|_{L^2} > \|\nabla W\|_{L^2}$ and $E_\phi (u_0,u_1) < E_1 (W,0)$, then the corresponding solution to (CP1) in the focusing case blows up within finite time in both two time directions. Since our argument here is similar to the one used in section 7 of \cite{kenig}, we will omit some details. 

\paragraph{The idea} If the initial data $u_0 \in L^2 (\Rm^d)$, then the blow-up of the solution $u$ can be proved by considering the function $y (t) = \int_{\Rm^d}|u(x,t)|^2 dx$ and showing that this function has to blow up in finite time. In the general case, we have to use a cut-off technique. Let us assume $T_+ (u_0,u_1) = + \infty$ and show a contradiction. Applying integration by parts and smoothing approximation techniques, we have 

\begin{lemma} \label{derivatives of comb 2}
 Let $\varphi_R (x)$ be the cut-off function as given in Definition \ref{definition of varphi}. If we define $\displaystyle y_R (t) = \int_{\Rm^d} |u(x,t)|^2 \varphi_R(x) dx$, then 
\begin{align*}
 y'_R (t) & = 2 \int_{\Rm^d} u_t (x,t) u(x,t) \varphi_R (x) dx\\
 y''_R (t) & = - \frac{4d}{d-2} \int_{\Rm^d} \left(\frac{1}{2} |\nabla u|^2 + \frac{1}{2} |u_t|^2 - \frac{1}{2^\ast} \phi |u|^{2^\ast}\right)\varphi_R dx + \frac{4(d-1)}{d-2} \int_{\Rm^d} |u_t|^2 \varphi_R dx\\
 & \qquad + \frac{4}{d-2} \int_{\Rm^d} |\nabla u|^2 \varphi_R dx -2 \int_{\Rm^d} (\nabla u \cdot \nabla \varphi_R) u dx. 
\end{align*}
\end{lemma}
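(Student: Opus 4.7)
The first identity is immediate from differentiation under the integral sign (followed by a smoothing/cut-off approximation so that the manipulation is justified at the level of the $\dot{H}^1\times L^2$ solution). For the second identity, I would differentiate once more to get
\[
 y_R''(t) = 2\int_{\Rm^d} |u_t|^2 \varphi_R\,dx + 2\int_{\Rm^d} u\, u_{tt}\,\varphi_R\,dx,
\]
and then substitute the equation $u_{tt} = \Delta u + \phi(x) |u|^{4/(d-2)} u$. The nonlinear term immediately contributes $2\int \phi |u|^{2^\ast} \varphi_R\,dx$, and for the Laplacian term I would integrate by parts once:
\[
 2\int_{\Rm^d} u\,\Delta u\,\varphi_R\,dx = -2\int_{\Rm^d} |\nabla u|^2 \varphi_R\,dx - 2\int_{\Rm^d} (\nabla u \cdot \nabla \varphi_R)\, u\,dx.
\]
Collecting these three contributions produces the raw expression
\[
 y_R''(t) = 2\int_{\Rm^d} |u_t|^2 \varphi_R\,dx - 2\int_{\Rm^d} |\nabla u|^2 \varphi_R\,dx + 2\int_{\Rm^d}\phi|u|^{2^\ast}\varphi_R\,dx - 2\int_{\Rm^d}(\nabla u\cdot\nabla\varphi_R)\,u\,dx.
\]

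The remaining step is purely algebraic: one rewrites the above as in the statement by splitting $2|u_t|^2$ and $-2|\nabla u|^2$ in terms of the energy density $\tfrac12|\nabla u|^2 + \tfrac12|u_t|^2 - \tfrac{1}{2^\ast}\phi|u|^{2^\ast}$. Concretely, using $2^\ast = 2d/(d-2)$ so that $\tfrac{4d}{d-2}\cdot\tfrac{1}{2^\ast}=2$, the combination on the right-hand side of the claimed formula telescopes: the coefficient of $\int|\nabla u|^2\varphi_R$ becomes $-\tfrac{2d}{d-2}+\tfrac{4}{d-2}=-2$, the coefficient of $\int|u_t|^2\varphi_R$ becomes $-\tfrac{2d}{d-2}+\tfrac{4(d-1)}{d-2}=2$, and the coefficient of $\int\phi|u|^{2^\ast}\varphi_R$ becomes $2$, matching the display above.

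The only mild obstacle is justifying the two integration-by-parts (no boundary contributions arise because $\varphi_R$ is compactly supported and smooth, but $u$ only lies in $\dot{H}^1$). As in the calculation of Lemma \ref{derivatives of combo}, I would carry out both derivative computations for smooth approximations $u^{(\eps)}$ (obtained by mollification or by approximating the initial data in $H$ and invoking the local well-posedness theory) and then pass to the limit, which is legitimate since every integrand involves at most quadratic expressions in $(u,\nabla u,u_t)$ multiplied by the Schwartz cut-off $\varphi_R$ or its bounded derivatives, together with the $L^{2^\ast}$ term handled by the Sobolev embedding.
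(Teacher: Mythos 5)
Your proposal is correct and follows exactly the route the paper sketches (it says only "applying integration by parts and smoothing approximation techniques" and omits the computation): differentiate under the integral sign, substitute the equation for $u_{tt}$, integrate by parts on the Laplacian term, and then verify by the elementary coefficient bookkeeping you display (using $2^\ast = 2d/(d-2)$) that the raw expression
\[
 y_R''(t) = 2\int_{\Rm^d} |u_t|^2 \varphi_R\,dx - 2\int_{\Rm^d} |\nabla u|^2 \varphi_R\,dx + 2\int_{\Rm^d}\phi|u|^{2^\ast}\varphi_R\,dx - 2\int_{\Rm^d}(\nabla u\cdot\nabla\varphi_R)\,u\,dx
\]
telescopes to the stated decomposition. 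The justification via smooth approximations and passage to the limit is the same device the paper invokes for the analogous Lemma on $G_R$; the only cosmetic slip is calling $\varphi_R$ "Schwartz" when it is compactly supported smooth, which does not affect the argument.
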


\paragraph{Tail Estimate} In order to dominate the error of the integral created by the cut-off, we need to make a ``tail'' estimate. First of all, for any given initial data $(u_0,u_1) \in \dot{H}^1 \times L^2 (\Rm^d)$ we always have 
\[
 \lim_{R \rightarrow \infty} \left\|\left((1-\varphi_R (x))u_0, (1-\varphi_R (x))u_1\right)\right\|_{\dot{H}^1 \times L^2 (\Rm^d)} = 0.
\]
Thus for any $\eps > 0$, there exists a number $R_0 = R_0 (\eps)$, such that 
\[
  \left\|\left((1-\varphi_{R_0} (x))u_0, (1-\varphi_{R_0} (x))u_1\right)\right\|_{\dot{H}^1 \times L^2 (\Rm^d)} < \eps.
\]
When $\eps$ is sufficiently small, our local theory guarantees that the solution $u_{R_0}$ to (CP1) with initial data $\left((1-\varphi_{R_0} (x))u_0, (1-\varphi_{R_0} (x))u_1\right)$ exists globally in time and scatters with 
\[
 \sup_{t \in \Rm} \|(u_{R_0}(\cdot, t), \partial_t u_{R_0}(\cdot ,t))\|_{\dot{H}^1 \times L^2 (\Rm^d)} < 2\eps. 
 \]
As a result, we have the following estimate for each $t \in R$:
\[
 \int_{\Rm^d} \left( |\nabla u_{R_0}(x,t)|^2 + |\partial_t u_{R_0}(x,t)|^2 + |u_{R_0}(x,t)|^{2^\ast} + \frac{|u_{R_0}(x,t)|^2}{|x|^2}\right) dx \leq C \eps^2. 
\]
Here $C > 1$ is a constant depending only on the dimension $d$. Since our cut-off version of initial data $\left((1-\varphi_{R_0} (x))u_0, (1-\varphi_{R_0} (x))u_1\right)$ remain the same as the original initial data $(u_0,u_1)$ in the region $\{x: |x| \geq 2 R_0\}$, finite speed of propagation immediately gives 
\begin{equation} \label{tail estimate}
  \int_{|x|> 2R_0 (\eps) + |t| } \left( |\nabla u (x,t)|^2 + |\partial_t u (x,t)|^2 + |u (x,t)|^{2^\ast} + \frac{|u (x,t)|^2}{|x|^2}\right) dx \leq C \eps^2
\end{equation}

\paragraph{Proof of the blow-up part} If $R > 2 R_0 (\eps)$ and $t \in [0, R-2R_0 (\eps)]$, then we can combine Lemma \ref{derivatives of comb 2} with the tail estimate \eqref{tail estimate} and obtain 
\[
 y''_R (t) = -\frac{4d}{d-2} E_\phi (u,u_t) + \frac{4(d-1)}{d-2} \int_{\Rm^d} |u_t|^2 \varphi_R dx + \frac{4}{d-2} \int_{\Rm^d} |\nabla u|^2 dx + O(\eps^2). 
\]
We have already known $E_\phi (u,u_t) = E_\phi (u_0,u_1) < E_1 (W,0)$. In addition, we claim that the inequality $\|\nabla u(\cdot, t)\|_{L^2} \geq \|\nabla W\|_{L^2}$ holds for all $t\geq 0$. Otherwise we would have $\|\nabla u_0\|_{L^2} < \|\nabla W\|_{L^2}$ by Lemma \ref{energy trapping}. Using these inequalities and the identity $\|\nabla W\|_{L^2}^2 = d \cdot E_1(W,0)$ we can find a lower bound of the second derivative 
\begin{align*}
 y''_R(t) \geq & \frac{4d}{d-2} \left(E_1(W,0) - E_\phi (u_0,u_1)\right) + \frac{4(d-1)}{d-2} \int_{\Rm^d} |u_t|^2 \varphi_R dx\\
  & \qquad + \frac{4}{d-2} \left(\int_{\Rm^d} |\nabla W|^2 dx - d\cdot  E_1 (W,0)\right) + O(\eps^2)\\
  \geq & \delta + \frac{4(d-1)}{d-2} \int_{\Rm^d} |u_t|^2 \varphi_R dx  - C_1 \eps^2.
\end{align*}
Here the constant $C_1$ is determined solely by the dimension $d$ while $\delta$ can be arbitrarily chosen in the interval $\left(0, \frac{4d}{d-2} \left(E_1(W,0) - E_\phi (u_0,u_1)\right)\right)$. Let us fix $\delta \ll 1$ and $\eps = \delta^2$ so that 
\begin{align} \label{choice of delta}
&\delta - C_1 \eps^2 = \delta -C_1 \delta^4 > \delta/2,& &\delta < \min\left\{C, \frac{1}{100C}, \frac{4d}{d-2} \left(E_1(W,0) - E_\phi (u_0,u_1)\right)\right\}&
\end{align}
Here $C$ is the constant in the inequality \eqref{tail estimate}. As a result we have if $R > 2 R_0 (\delta^2)$ and $t \in [0, R - 2 R_0 (\delta^2)]$, then 
\begin{equation} \label{estimate on the second derivative}
 y''_R (t) \geq \frac{\delta}{2} + \frac{4(d-1)}{d-2} \int_{\Rm^d} |u_t|^2 \varphi_R dx;\quad \Longrightarrow \quad y''_R(t) y_R(t) \geq \frac{d-1}{d-2} [y'_R (t)]^2. 
\end{equation} 
In addition, we have the following estimates on $y_R (0)$ and $|y'_R (0)|$. In the integrals below $\Omega$ represents the region $\{x: 2R_0(\delta^2) < |x| < 2R\}$. 
\begin{align*}
 y_R (0) \leq & \int_{|x|\leq 2 R_0(\delta^2)} |u_0|^2 dx + 4R^2 \int_{\Omega}  \frac{|u_0|^2}{|x|^2} dx \leq \int_{|x|\leq 2 R_0(\delta^2)} |u_0|^2 dx + 4C \delta^4 R^2; \\
 |y'_R (0)| \leq & 2 \int_{|x|\leq 2R_0(\delta^2)} |u_0| \cdot |u_1|\, dx + 2 \left(\int_{\Omega} |u_1|^2 dx \right)^{1/2} \left(4R^2 \int_{\Omega} \frac{|u_0|^2}{|x|^2} dx\right)^{1/2}\\
 \leq & 2 \int_{|x|\leq 2R_0(\delta^2)} |u_0| \cdot |u_1|\, dx + 4C\delta^4 R.
\end{align*}
As a result, we always have the following estimates for sufficiently large $R > R_1$:
\begin{align} \label{choice of R 1}
 &y_R (0) < 5C \delta^4 R^2;& &|y'_R (0)| < 5C \delta^4 R;& &R > 100 R_0 (\delta^2).& 
\end{align}
Let us consider $t_0 (R) \doteq \min \{t: 0 \leq t \leq 12 C \delta R,\; y'_R (t) \geq C\delta^2 R\}$ for such a radius $R$. This is well-defined since we know $12 C\delta R < 12R/100 < R- 2 R_0 (\delta^2)$ and 
\[
 y'_R (12 C \delta R) \geq y'_R (0) + 12 C \delta R \cdot \inf_{0\leq t\leq 12C\delta R} y''_R (t) \geq -5C \delta^4 R + 12 C \delta R \cdot \frac{\delta}{2} \geq C \delta^2 R. 
\] 
In addition we have 
\begin{equation} \label{upper bound of yR}
 y_R (t_0(R)) \leq y_R (0) + t_0(R) \cdot \max_{0 \leq t \leq t_0(R)} y'_R (t) \leq 5C \delta^4 R^2 + 12 C \delta R \cdot C \delta^2 R \leq 17 C^2 \delta^3 R^2.
\end{equation}
Now let us define $\displaystyle z_R (t) = \frac{y'_R (t)}{y_R (t)}$ for $t \in [t_0 (R), R - 2 R_0 (\delta^2)]$. The function $z_R (t)$ is always positive. By the estimate \eqref{upper bound of yR} and the definition of $t_0 (R)$, we have $\displaystyle z(t_0(R)) \geq \frac{C \delta^2 R}{17 C^2 \delta^3 R^2} = \frac{1}{17 C \delta R}$. Combining basic differentiation and the estimate \eqref{estimate on the second derivative}, we have 
\[
 z'_R (t) = \frac{y''_R (t) y_R (t) - [y'_R (t)]^2}{y_R^2 (t)} \geq \frac{1}{d-2} \left[\frac{y'_R (t)}{y_R (t)}\right]^2 = \frac{1}{d-2} z_R^2 (t).
\]
for any $t \in [t_0(R), R - 2 R_0 (\delta^2)]$. Dividing both sides by $z_R^2 (t)$ and integrating in $t$, we obtain 
\[
 \frac{1}{z(t_0(R))} - \frac{1}{z(R - 2 R_0 (\delta^2))} \geq \frac{1}{d-2} \left[(R - 2 R_0 (\delta^2)) - t_0(R) \right]
\]
Using the upper bound of $t_0 (R)$, the lower bound of $z(t_0(R))$ and the choice of $R$, we have   
\[
 17 C \delta R \geq \frac{1}{z(t_0(R))} > \frac{1}{d-2}\left[R - \frac{R}{50} - 12 C \delta R\right] \quad \Longrightarrow \quad [17(d-2) + 12] C \delta R > \frac{49}{50} R.
\]
This contradicts our choice of $\delta$, please see \eqref{choice of delta}. 

\section{Application on a Shifted Wave Equation on $\Hm^3$}

In this section we consider the radial solutions to an energy-critical, focusing, semilinear shifted wave equation on the hyperbolic space $\Hm^3$
\begin{equation} \label{application equation}
 \left\{\begin{array}{ll} 
         \partial_t^2 v - (\Delta_{\Hm^3} +1) v = |v|^4 v, & (y,t) \in \Hm^3 \times \Rm;\\
         v(\cdot,0) = u_0 \in H^{0,1} (\Hm^3);\\
         \partial_t v(\cdot,0) = u_1 \in L^2 (\Hm^3);
        \end{array} \right.
\end{equation}
as one application of our main theorem. 

\subsection{Background and the Space of Functions}

\paragraph{Model of hyperbolic space} There are various models for the Hyperbolic space $\Hm^3$. We select to use the hyperboloid model. Let us consider the Minkowswi space $\Rm^{3+1}$ equipped with the standard Minkowswi metric $-(dx^0)^2 + (dx^1)^2 + \cdots + (dx^3)^2$ and the bilinear form $[x,y] = x_0 y_0 - x_1 y_1 - x_2 y_2 - x_3 y_3$. The hyperbolic space $\Hm^3$ can be defined as the hyperboloid $x_0^2 - x_1^2 - x_2^2 -x_3^2 =1$ whose metric, covariant derivatives and measure are induced by the Minkowswi metric.

\paragraph{Radial Functions} We can introduce polar coordinates $(r, \Theta)$ on the hyperbolic space $\Hm^3$. More precisely, we use the pair $(r, \Theta) \in [0,\infty) \times {\mathbb S}^{2}$ to represent the point $(\cosh r, \Theta \sinh r) \in \Rm^{3+1}$ in the hyperboloid model above. One can check that the $r$ coordinate of a point in $\Hm^3$ represents the distance from that point to the ``origin'' $\mathbf{0} \in \Hm^n$, which is the point $(1,0,0,0)$ in the Minkiwski space. In terms of the polar coordinate, the measure and Laplace operator can be given by 
\begin{align*}
 &d\mu = \sinh^2 r \,dr d\Theta;& &\Delta_{\Hm^3} = \partial_r^2 + 2 \coth r \cdot \partial_r + \sinh^{-2} r \cdot \Delta_{{\mathbb S}^2}& 
\end{align*}
Here $d\Theta$ corresponds the usual unnormalized measure on the sphere ${\mathbb S}^2$. As in Euclidean spaces, for any $y\in \Hm^3$ we also use the notation $|y|$ for the distance from $y$ to $\mathbf{0}$. Namely
\[
 r = |y| = d (y, \mathbf{0}), \qquad y \in \Hm^n.
\]
A function $f$ defined on $\Hm^3$ is radial if it is independent of $\Theta$. By convention we can use the notation $f(r)$ to mention a radial function $f$.

\paragraph{Function Space} The homogenous Sobolev space $H^{0,1}(\Hm^3)$, which is the counterpart of $\dot{H}^1 (\Rm^d)$ in the hyperbolic space $\Hm^3$, is defined by 
\begin{align*}
 &H^{0,1} (\Hm^3) = (-\Delta_{\Hm^3} -1)^{-1/2} L^2 (\Hm^3) & &\|u\|_{H^{0,1} (\Rm^3)} = \|(-\Delta_{\Hm^3} -1)^{1/2} u\|_{L^2 (\Hm^3)}&
\end{align*}
If $f \in C_0^\infty (\Hm^3)$, then its $H^{0,1} (\Hm^3)$ norm can also be given by ($|\nabla f| = (\Db_\alpha f \Db^\alpha f)^{1/2}$)
\begin{equation} \label{alternative definition of H01}
 \|f\|_{H^{0,1} (\Rm^3)} = \int_{\Hm^3} \left(|\nabla f (y)|^2 - |f(y)|^2\right) d\mu(y).
\end{equation}
Please pay attention that the spectrum of the Laplace operator $- \Delta_{\Hm^3}$ is $[1,\infty)$, which is much different from that of the Laplace operator on $\Rm^d$. As a result, the integral above is always nonnegative. 

\paragraph{Sobolev Embedding} As in Euclidean Spaces, we have the Sobolev embedding $H^{0,1} (\Hm^3) \hookrightarrow L^6 (\Hm^3)$. (Please see \cite{subhyper} for more details.) This implies that the energy 
\begin{equation} \label{energy of v}
 E(v, \partial_t v) = \frac{1}{2}\|v\|_{\dot{H}^{0,1}(\Hm^3)}^2 + \frac{1}{2} \|\partial_t v\|_{L^2 (\Hm^3)}^2 - \frac{1}{6} \|v\|_{L^6 (\Hm^3)}^6 = E(v_0,v_1)
\end{equation}
is a finite constant as long as $(v_0,v_1) \in \dot{H}^{0,1}\times L^2 (\Hm^3)$.

\paragraph{Local Theory} Both the Strichartz estimates and local theory have been discussed in my recent work \cite{echyper}. Generally speaking, the local theory is similar to that of a wave equation on the Euclidean space. Given any initial data $(v_0,v_1) \in \dot{H}^{0,1}\times L^2 (\Hm^3)$, there is a unique solution $v$ defined on a maximal interval of time $I$, such that $(v, \partial_t v) \in C(I; H^{0,1}\times L^2 (\Hm^3))$ and the inequality $\|v\|_{L^5 L^{10} (J \times \Hm^3)} < \infty$ holds for any bounded closed subinterval $J$ of $I$.   

\subsection{A transformation}

Let us consider the transformation ${\mathbf T} : L^2 (\Hm^3) \rightarrow L^2 (\Rm^3)$ defined by
\[
 ({\mathbf T} f) (r, \Theta) = \frac{\sinh r}{r} f (r, \Theta).
\]
Here $(r, \Theta) \in [0,\infty) \times {\mathbb S}^2$ represents the polar coordinates, in either the hyperbolic space $\Hm^3$ or the Euclidean space $\Rm^3$. It is trivial to check that this transformation is actually an isometry from $L^2 (\Hm^3)$ to $L^2 (\Rm^3)$. In addition, the transformation $\mathbf T$ is also an isometry from $H_{rad}^{0,1} (\Hm^3)$ to $\dot{H}_{rad}^{1} (\Rm^3)$, where two spaces of radial functions are subspaces defined by 
\begin{align*}
 &H_{rad}^{0,1} (\Hm^3) = \{f \in H^{0,1}(\Hm^3): f\; \hbox{is radial}\}& &\dot{H}_{rad}^{1} (\Rm^3) = \{f \in \dot{H}^1 (\Rm^3): f\; \hbox{is radial}\}.&
\end{align*}
This can be observed by using the identity \eqref{alternative definition of H01} and conducting basic calculations. Furthermore, if $f$ is a radial and smooth function defined on $\Hm^3$, then one can also verify 
\[
 - \Delta_{\Rm^3} \left({\mathbf T} f\right) = {\mathbf T} \left[(-\Delta_{\Hm^3} -1) f\right].
\]
Combining all the facts above, we have 

\begin{lemma} \label{transformation between two equations}
 If $v(y,t)$ is a solution to the equation \eqref{application equation} with initial data $(u_0,u_1) \in H_{rad}^{0,1} \times L_{rad}^2 (\Hm^3)$ and a maximal lifespan $I$, then $u (\cdot,t) = \mathbf T v(\cdot, t)$ is a solution to the equation 
 \begin{equation} \label{equation for u application}
  \partial_t^2 u - \Delta_{\Rm^3} u = \phi(x) |u|^4 u, \qquad (x,t) \in \Rm^3 \times \Rm
 \end{equation} 
 with the initial data $({\mathbf T} u_0, {\mathbf T} u_1)$ and the same maximal lifespan $I$. Here the coefficient function $\phi$ is defined by 
$\phi(x) = \frac{|x|^4}{\sinh^4 |x|}$. In addition, the energy is also preserved under this transformation. Namely, the energy 
\[
 E_\phi (u,\partial_t u) = \int_{\Rm^3} \left[\frac{1}{2}|\nabla u| + \frac{1}{2} |\partial_t u|^2 - \frac{\phi}{6} |u|^6 \right] dx
\]
remains the same as the energy $E(v,\partial_t v)$ defined in \eqref{energy of v}. 
\end{lemma}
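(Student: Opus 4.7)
My plan is to push the equation for $v$ through the transformation $\mathbf{T}$ and use the three properties of $\mathbf{T}$ that the paper already records: it is an isometry $L_{rad}^2(\Hm^3) \to L_{rad}^2(\Rm^3)$, an isometry $H_{rad}^{0,1}(\Hm^3)\to \dot H_{rad}^1(\Rm^3)$, and it intertwines the shifted Laplacian with the Euclidean Laplacian via $-\Delta_{\Rm^3}(\mathbf T f) = \mathbf T[(-\Delta_{\Hm^3}-1)f]$ on radial smooth functions. Since $\mathbf{T}$ acts only in the spatial variable, it commutes with $\partial_t^2$, so applying $\mathbf{T}$ to the equation $\partial_t^2 v - (\Delta_{\Hm^3}+1)v = |v|^4 v$ yields $\partial_t^2 u - \Delta_{\Rm^3} u = \mathbf{T}(|v|^4 v)$ for $u = \mathbf{T}v$.

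The key computation is to match $\mathbf{T}(|v|^4 v)$ with $\phi(x)|u|^4 u$. Using polar coordinates and the definition of $\mathbf T$,
\[
 |u|^4 u = \left(\frac{\sinh r}{r}\right)^5 |v|^4 v,\qquad \mathbf{T}(|v|^4 v) = \frac{\sinh r}{r} |v|^4 v,
\]
so $\mathbf{T}(|v|^4 v) = \frac{r^4}{\sinh^4 r}\cdot |u|^4 u = \phi(x) |u|^4 u$, which is exactly the nonlinearity required by \eqref{equation for u application}. Conversely, because $\mathbf{T}$ is injective, any radial solution $u$ to \eqref{equation for u application} arises this way from $v = \mathbf{T}^{-1}u$, and the initial data clearly transform as $({\mathbf T}u_0, {\mathbf T}u_1)$ with the $H^{0,1}\times L^2$ and $\dot H^1\times L^2$ norms matching. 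The maximal lifespan assertion then follows because the local theory for \eqref{application equation} developed in \cite{echyper} and the local theory of Section 2 are both characterized by finiteness of a Strichartz norm on bounded closed sub-intervals, and under $\mathbf{T}$ (together with the standard trick of smoothing and density) the Duhamel formulations are equivalent, so the two solutions either exist together or blow up together.

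It remains to check that the three pieces of the energy transform correctly. The kinetic and gradient parts are immediate from the two isometry statements: $\|\partial_t u\|_{L^2(\Rm^3)} = \|\partial_t v\|_{L^2(\Hm^3)}$ and $\|\nabla u\|_{L^2(\Rm^3)} = \|v\|_{H^{0,1}(\Hm^3)}$. For the potential term a direct change of variables in the radial integral gives
\[
 \int_{\Rm^3} \phi(x)|u|^6\, dx = 4\pi\!\int_0^\infty \frac{r^4}{\sinh^4 r}\cdot\frac{\sinh^6 r}{r^6}|v|^6\, r^2\,dr = 4\pi\!\int_0^\infty |v|^6 \sinh^2 r\, dr = \int_{\Hm^3} |v|^6\, d\mu,
\]
so that $E_\phi(u,\partial_t u) = E(v,\partial_t v)$. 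Adding the three identities gives the claimed equality of energies.

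I don't expect any step to be a real obstacle: this lemma is essentially an algebraic verification, and the only slightly delicate point is justifying that the intertwining identity $-\Delta_{\Rm^3}\mathbf T = \mathbf T(-\Delta_{\Hm^3}-1)$ transfers from smooth radial functions to the natural function classes in which the two Cauchy problems are posed. I would handle that by approximating radial $H^{0,1}$ initial data by smooth compactly supported radial functions, observing that both sides of \eqref{equation for u application} and \eqref{application equation} depend continuously on the data in the relevant topologies, and then invoking uniqueness in the two local theories to transfer the correspondence from approximants to the limit.
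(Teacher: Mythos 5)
Your proof is correct and follows exactly the route the paper intends: the paper states this lemma as a consequence of the three facts about $\mathbf{T}$ (the two isometry properties and the intertwining identity $-\Delta_{\Rm^3}\mathbf{T} = \mathbf{T}(-\Delta_{\Hm^3}-1)$) established immediately before it, and you simply carry out the computation of $\mathbf{T}(|v|^4 v) = \phi|u|^4 u$, the energy identity, and the lifespan correspondence that the paper leaves implicit. All three pieces of your verification check out.
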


\subsection{Conclusion}

According to Remark \ref{example of phi focusing}, our main theorem may be applied to the equation \eqref{equation for u application}. Combing our main theorem and Lemma \ref{transformation between two equations}, we immediately obtain 

\begin{theorem} \label{hyperbolic theorem focusing}
Given a pair of initial data $(v_0,v_1) \in H_{rad}^{0,1} \times L_{rad}^2 (\Hm^3)$ with an energy $E(v_0,v_1) < E_1 (W,0)$, the global behaviour, and in particular, the maximal lifespan $I = (- T_- (v_0,v_1), T_+ (v_0,v_1))$ of the corresponding solution $v$ to the Cauchy problem \eqref{application equation} can be determined by:
\begin{itemize}
 \item[(I)] If $\|v_0\|_{H^{0,1}(\Hm^3)} < \|\nabla W\|_{L^2 (\Rm^3)}$, then $I = \Rm$ and $v$ scatters in both time directions. 
 \item[(II)] If $\| v_0\|_{H^{0,1} (\Hm^3)} > \|\nabla W\|_{L^2(\Rm^3)}$, then $v$ blows up within finite time in both two directions, namely 
 \begin{align*}
  &T_- (v_0,v_1) < +\infty;& &T_+ (v_0,v_1) < +\infty.&
 \end{align*}
\end{itemize}
\end{theorem}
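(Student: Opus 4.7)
The plan is to reduce Theorem~\ref{hyperbolic theorem focusing} to Theorem~\ref{main theorem focusing} via the isometry $\mathbf{T}$ from Lemma~\ref{transformation between two equations}. Given radial data $(v_0,v_1) \in H^{0,1}_{rad}\times L^2_{rad}(\Hm^3)$, I would set $(u_0,u_1) = (\mathbf{T}v_0,\mathbf{T}v_1)$ and let $u(x,t) = (\mathbf{T}v)(x,t)$; by that lemma, $u$ solves the Euclidean equation~\eqref{equation for u application} with coefficient $\phi(x) = (|x|/\sinh|x|)^4$, and $u$ shares the same maximal lifespan as $v$.

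The first step will be to check that $\phi$ meets the hypotheses of Theorem~\ref{main theorem focusing}. Clearly $\phi \in C^1(\Rm^3;(0,1])$ and decays exponentially as $|x| \to \infty$, so~\eqref{basic condition of phi} holds. For the structural assumption~\eqref{condition phi focusing}, I will invoke Remark~\ref{example of phi focusing} with $\sigma = 4$: in dimension $d = 3$ we have $2^\ast = 6$, and $2 \leq 4 \leq 6$, so the remark applies directly.

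Next, I would translate the quantitative hypotheses across $\mathbf{T}$. Because $\mathbf{T}$ is simultaneously an isometry $L^2(\Hm^3) \to L^2(\Rm^3)$ and $H^{0,1}_{rad}(\Hm^3) \to \dot{H}^1_{rad}(\Rm^3)$, and because Lemma~\ref{transformation between two equations} provides $E_\phi(u_0,u_1) = E(v_0,v_1)$ together with $\|\nabla u_0\|_{L^2(\Rm^3)} = \|v_0\|_{H^{0,1}(\Hm^3)}$, the energy smallness condition and each alternative in the dichotomy on the gradient norm transfer verbatim to the Euclidean side. Applying Theorem~\ref{main theorem focusing} to $u$ then produces the corresponding dichotomy for the Euclidean solution.

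Finally, the conclusions have to be pulled back to $v$. Case~(II) is immediate, since $v$ and $u$ share the same maximal lifespan by construction. For Case~(I), the only subtle point, which I expect to be the main technical obstacle, is showing that scattering of $u$ in $\dot{H}^1 \times L^2(\Rm^3)$ implies scattering of $v$ in $H^{0,1} \times L^2(\Hm^3)$. My plan is to exploit the intertwining identity $-\Delta_{\Rm^3}(\mathbf{T}f) = \mathbf{T}[(-\Delta_{\Hm^3}-1)f]$ on the radial class, noted just before Lemma~\ref{transformation between two equations}: this shows that $\mathbf{T}$ conjugates the Euclidean free wave propagator with the shifted linear propagator on $\Hm^3$, so the Euclidean asymptotic scattering data $(w_0,w_1)$ for $u$ can be pulled back by $\mathbf{T}^{-1}$ to produce the desired asymptotic free solution for $v$ in $H^{0,1} \times L^2(\Hm^3)$.
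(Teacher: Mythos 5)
Your proposal is correct and follows essentially the same route as the paper: verify via Remark \ref{example of phi focusing} (with $\sigma=4$, $d=3$, $2^\ast=6$) that $\phi(x)=(|x|/\sinh|x|)^4$ satisfies the hypotheses of Theorem \ref{main theorem focusing}, then transfer the dichotomy through the isometry $\mathbf{T}$ of Lemma \ref{transformation between two equations}. The paper states this as an immediate consequence; your added detail on pulling scattering back through the intertwining of the free propagators is a correct elaboration of what the paper leaves implicit.
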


\end{document}